\documentclass[10pt,twoside]{article}

\usepackage{etoolbox}
\csundef{leftbar}
\csundef{endleftbar}

\usepackage[utf8]{inputenc}
\usepackage[T1]{fontenc}
\usepackage[french,english]{babel} 
\usepackage{lmodern}
\usepackage{makeidx} 
\usepackage{graphicx}

\usepackage{caption}
\usepackage{eurosym}
\usepackage{xfrac}
\usepackage{listings}
\usepackage{mathrsfs}
\usepackage{euscript}
\usepackage{stmaryrd}
\usepackage{eqnarray}
\usepackage{multirow}
\usepackage{bold-extra}
\usepackage{blkarray}

\usepackage{amsmath}
\usepackage{amsthm}
\usepackage{keytheorems}
\newkeytheoremstyle{defn}{
	spaceabove=\topsep, spacebelow=\topsep,%
	headfont=\normalfont\bfseries,%
	notefont=\normalfont, notebraces={(}{)},%
	bodyfont=\normalfont,%
	postheadspace=.5em,%
}
\newkeytheorem{defi}[numberwithin=section,name=Definition,style=defn]
\newkeytheorem{definosec}[name=Definition,style=defn]
\newkeytheorem{quesnosec}[name=Question,style=defn]
\newkeytheorem{defifr}[numberwithin=section,name=D\'efinition,style=defn]
\newkeytheorem{exe}[sibling=defi,name=Example,style=defn]
\newkeytheorem{exenosec}[sibling=definosec,name=Example,style=defn]
\newkeytheorem{exefr}[sibling=defifr,name=Exemple,style=defn]
\newkeytheorem{exes}[sibling=defi,name=Examples,style=defn]
\newkeytheorem{const}[sibling=defi,name=Construction,style=defn]
\newkeytheorem{constfr}[sibling=defifr,name=Construction,style=defn]
\newkeytheorem{defin}[numbered=no,name=D\'efinition,style=defn]
\newkeytheorem{conj}[numbered=no,name=Conjecture,style=defn]
\newkeytheorem{nconj}[numbered=no,name=Naive conjecture,style=defn]
\newkeytheorem{con}[sibling=defi,name=Conjecture,style=defn]
\newkeytheorem{confr}[sibling=defi,name=Conjecture,style=defn]
\newkeytheoremstyle{proposition}{
	spaceabove=\topsep, spacebelow=\topsep,
	headfont=\normalfont\bfseries,
	notefont=\normalfont, notebraces={(}{)},
	bodyfont=\itshape,
	postheadspace=.5em,
	qed=
}
\newkeytheorem{prop}[sibling=defi,name=Proposition,style=proposition]
\newkeytheorem{propnosec}[sibling=definosec,name=Proposition,style=proposition]
\newkeytheorem{propfr}[sibling=defifr,name=Proposition,style=proposition]
\newkeytheorem{propo}[numbered=no,name=Proposition,style=proposition]
\newkeytheorem{theo}[sibling=defi,name=Theorem,style=proposition]
\newkeytheorem{theonn}[numbered=no,name=Theorem,style=proposition]
\newkeytheorem{theonosec}[sibling=definosec,name=Theorem,style=proposition]
\newkeytheorem{theofr}[sibling=defifr,name=Th\'eor\`eme,style=proposition]
\newkeytheorem{theoreme}[numbered=no,name=Th\'eor\`eme,style=proposition]
\newkeytheorem{lem}[sibling=defi,name=Lemma,style=proposition]
\newkeytheorem{lemnosec}[sibling=definosec,name=Lemma,style=proposition]
\newkeytheorem{lemfr}[sibling=defifr,name=Lemme,style=proposition]
\newkeytheorem{cor}[sibling=defi,name=Corollary,style=proposition]
\newkeytheorem{cornosec}[sibling=definosec,name=Corollary,style=proposition]
\newkeytheorem{cornn}[numbered=no,name=Corollary,style=proposition]
\newkeytheorem{corfr}[sibling=defifr,name=Corollaire,style=proposition]
\newkeytheoremstyle{remarque}{
	spaceabove=\topsep, spacebelow=\topsep,
	headfont=\normalfont\itshape,
	notefont=\normalfont, notebraces={(}{)},
	bodyfont=\normalfont,
	postheadspace=.5em,
	qed=
}
\newkeytheorem{rema}[sibling=defi,name=Remark,style=remarque]
\newkeytheorem{remafr}[sibling=defifr,name=Remarque,style=remarque]
\newkeytheorem{reman}[numbered=no,name=Remark,style=remarque]
\newkeytheorem{remanfr}[numbered=no,name=Remarque,style=remarque]
\newkeytheorem{remas}[sibling=defi,name=Remarks,style=remarque]
\newkeytheorem{nota}[sibling=defi,name=Notation,style=remarque]
\newkeytheorem{notafr}[sibling=defifr,name=Notation,style=remarque]
\newkeytheorem{term}[sibling=defi,name=Terminology,style=remarque]
\newkeytheorem{termfr}[sibling=defifr,name=Terminologie,style=remarque]
\newkeytheorem{remo}[numbered=no,name=Remorse,style=remarque]

\usepackage{amssymb}
\usepackage{amstext}

\makeatletter
\DeclareFontFamily{OMX}{MnSymbolE}{}
\DeclareSymbolFont{MnLargeSymbols}{OMX}{MnSymbolE}{m}{n}
\SetSymbolFont{MnLargeSymbols}{bold}{OMX}{MnSymbolE}{b}{n}
\DeclareFontShape{OMX}{MnSymbolE}{m}{n}{
    <-6>  MnSymbolE5
   <6-7>  MnSymbolE6
   <7-8>  MnSymbolE7
   <8-9>  MnSymbolE8
   <9-10> MnSymbolE9
  <10-12> MnSymbolE10
  <12->   MnSymbolE12
}{}
\DeclareFontShape{OMX}{MnSymbolE}{b}{n}{
    <-6>  MnSymbolE-Bold5
   <6-7>  MnSymbolE-Bold6
   <7-8>  MnSymbolE-Bold7
   <8-9>  MnSymbolE-Bold8
   <9-10> MnSymbolE-Bold9
  <10-12> MnSymbolE-Bold10
  <12->   MnSymbolE-Bold12
}{}

\let\llangle\@undefined
\let\rrangle\@undefined
\DeclareMathDelimiter{\llangle}{\mathopen}%
                     {MnLargeSymbols}{'164}{MnLargeSymbols}{'164}
\DeclareMathDelimiter{\rrangle}{\mathclose}%
                     {MnLargeSymbols}{'171}{MnLargeSymbols}{'171}
\makeatother

\usepackage{tikz}
\usetikzlibrary{arrows,matrix,cd,babel}
\usepackage{quiver}

\usepackage{bbm}
\usepackage{multicol}

\usepackage{import}

\usepackage{lettrine}

  {\vspace*{\fill}\clearpage\restoregeometry}
  
\usepackage[french,lined]{algorithm2e}

\newcommand{\Zb}{\mathbb{Z}}
\newcommand{\Rb}{\mathbb{R}}

\newcommand{\Cb}{\mathbb{C}}

\newcommand{\Pb}{\mathbb{P}}
\newcommand{\Abb}{\mathbb{A}}

\newcommand{\Er}{\mathrm{E}}

\newcommand{\m}{\mathfrak{m}}

\newcommand{\Kr}{\mathrm{K}}

\newcommand{\GW}{\mathrm{GW}}
\newcommand{\MW}{\mathrm{MW}}

\newcommand{\Nis}{\mathrm{Nis}}

\newcommand{\Sr}{\mathrm{S}}

\newcommand{\CH}{\mathrm{CH}}
\newcommand{\Hr}{\mathrm{H}}

\newcommand{\Kbf}{\mathbf{K}}
\newcommand{\Ibf}{\mathbf{I}}

\newcommand{\Fr}{\mathrm{F}}
\newcommand{\Br}{\mathrm{B}}

\newcommand{\Wr}{\mathrm{W}}
\newcommand{\et}{\text{\'et}}

\newcommand{\Mr}{\mathrm{M}}
\newcommand{\Wbf}{\mathbf{W}}
\newcommand{\Gr}{\mathrm{G}}

\newcommand{\Ch}{\mathrm{Ch}}

\newcommand{\Jbf}{\mathbf{J}}

\newcommand{\Abf}{\mathbf{A}}

\newcommand{\alg}{\mathrm{alg}}
\newcommand{\Osc}{\mathscr{O}}

\newcommand{\Lc}{\mathcal{L}}

\newcommand{\Gm}{\mathbb{G}_m}

\newcommand{\Hsc}{\mathscr{H}}

\newcommand{\GL}{\mathrm{GL}}

\newcommand{\Vsc}{\mathscr{V}}

\newcommand{\Ceu}{\EuScript{C}}

\newcommand{\Ec}{\mathcal{E}}
\newcommand{\BGL}{\mathrm{BGL}}

\newcommand{\Gra}{\mathrm{Gr}}
\newcommand{\BGm}{\mathrm{B}\mathbb{G}_m}
\newcommand{\GWb}{\mathbf{GW}}
\newcommand{\Tbf}{\mathbf{T}}
\newcommand{\Sbf}{\mathbf{S}}

\DeclareMathOperator{\Ker}{Ker}

\DeclareMathOperator{\Spec}{Spec}

\DeclareMathOperator{\Coker}{Coker}

\DeclareMathOperator{\rk}{rk}

\DeclareMathOperator{\Pic}{Pic}

\DeclareMathOperator{\Sq}{Sq}

\renewcommand{\Im}{\operatorname{Im}}

\makeatletter 

\newcommand*\Neginternal[3]{\mathpalette\Neg@{{#1}{#2}{#3}}}
\newcommand*\Neg@[2]{\Neg@@{#1}#2}
\newcommand*\Neg@@[4]{%
  \mathrel{\ooalign{%
    $\m@th#1#4$\cr
    \hidewidth$\m@th#3{#1}\mkern\muexpr#2*2$\hidewidth\cr
  }}%
}

\newcommand*\negslash[1]{\m@th#1\not\mathrel{\phantom{=}}}
\newcommand*\snegslash[1]{\rotatebox[origin=c]{60}{$\m@th#1-$}}
\newcommand*\ssnegslash[1]{\rotatebox[origin=c]{60}{$\m@th#1{\dabar@}\mkern-7mu{\dabar@}$}}
\newcommand*\sssnegslash[1]{\rotatebox[origin=c]{60}{$\m@th#1\dabar@$}}
\makeatother  

\usepackage[newparttoc,pagestyles,toctitles]{titlesec}
\usepackage{titletoc}

\renewcommand*\thesection{\arabic{section}}
\titleformat{\section}[block]{\Large\scshape\filcenter}{\thesection.}{10pt}{}[]

\newpagestyle{main}{

\sethead[][\thepage][]
	{}{\thepage}{}
\setfoot[][][]
	{}{}{}}
	
\usepackage{minitoc}	
\mtcselectlanguage{english}	
\doparttoc

\usepackage{appendix}

 \usepackage[style=english]{csquotes}

\usepackage[backend=biber,style=ext-alphabetic, maxbibnames=50]{biblatex}
\DefineBibliographyExtras{english}{\DeclarePunctuationPairs{comma}{*?!}}

\DeclareBibliographyDriver{article}{%
  \usebibmacro{bibindex}%
  \usebibmacro{begentry}%
  \usebibmacro{author}%
  \setunit{\addcomma\space}
  \printfield[emph]{title}
  \setunit{\addcomma\space}
  \printfield{journaltitle}
  \setunit{\addspace}%
  \textbf{\printfield{volume}}
  \setunit{\addspace}%
  \printfield[parens]{year}
  \setunit{\addcomma\space}
  \printfield{number}
  \setunit{\addcomma\space}
  \printfield{pages}
  \usebibmacro{finentry}%
}

\DeclareFieldFormat[article]{title}{#1}
\DeclareFieldFormat[article]{journaltitle}{#1}
\DeclareFieldFormat[article]{pages}{#1}
\DeclareFieldFormat[article]{number}{\bibstring{number}~#1}

\DeclareBibliographyDriver{book}{%
  \usebibmacro{bibindex}%
  \usebibmacro{begentry}%
  \printnames{author}
  \setunit{\addcomma\space}
  \printfield{title}
  \setunit{\addcomma\space}
  \printfield{edition}
  \setunit{\addcomma\space}
  \printfield{series}
  \setunit{\addcomma\space}
  \printfield{volume}
  \setunit{\addcomma\space}
  \printfield{number}
  \setunit{\addcomma\space}
  \printlist{publisher}
  \setunit{\addcomma\space}
  \printlist{location}
  \setunit{\addcomma\space}
  \printfield{year}
  \usebibmacro{finentry}%
}

\DeclareFieldFormat[book]{volume}{\bibstring{volume}~#1}
\DeclareFieldFormat[book]{number}{\bibstring{number}~#1}
\DeclareFieldFormat[book]{publisher}{#1}

\DeclareBibliographyDriver{incollection}{%
  \usebibmacro{bibindex}%
  \usebibmacro{begentry}%
  \printnames{author}
  \setunit{\addcomma\space}
  \printfield{title}
  \setunit{\addperiod\space}
  \usebibmacro{in:}
  \printfield[emph]{booktitle}
  \setunit{\addcomma\space}
  \usebibmacro{byeditor+others}
  \setunit{\addcomma\space}
  \printfield{edition}
  \setunit{\addcomma\space}
  \printfield{series}
  \setunit{\addcomma\space}
  \printfield{volume}
  \setunit{\addcomma\space}
  \printfield{number}
  \setunit{\addcomma\space}
  \printlist{publisher}
  \setunit{\addcomma\space}
  \printlist{location}
  \setunit{\addcomma\space}
  \printfield{year}
  \setunit{\addcomma\space}
  \printfield{pages}
  \usebibmacro{finentry}%
}

\DeclareFieldFormat[incollection]{volume}{\bibstring{volume}~#1}
\DeclareFieldFormat[incollection]{number}{\bibstring{number}~#1}
\DeclareFieldFormat[incollection]{series}{#1,\addspace}

\DefineBibliographyExtras{english}{%
}%

\usepackage{authblk}

\newcommand\imCMsym[4][\mathord]{%
  \DeclareFontFamily{U} {#2}{}
  \DeclareFontShape{U}{#2}{m}{n}{
    <-6> #25
    <6-7> #26
    <7-8> #27
    <8-9> #28
    <9-10> #29
    <10-12> #210
    <12-> #212}{}
  \DeclareSymbolFont{CM#2} {U} {#2}{m}{n}
  \DeclareMathSymbol{#4}{#1}{CM#2}{#3}
}

\imCMsym{cmmi}{124}{\CMjmath}
\imCMsym[\mathop]{cmsy}{113}{\CMamalg}
\imCMsym[\mathop]{cmex}{96}{\CMcoprod}

\numberwithin{equation}{section}

\usepackage[colorlinks=true]{hyperref}

\DeclareMathAlphabet{\mathcal}{OMS}{cmsy}{m}{n}

\usepackage[a4paper,top=1.05in, bottom=1.05in, left=1.05in, right=1.05in]{geometry}

\begin{document}

\selectlanguage{english}

\pagestyle{main}

\title{On the cohomological classification of vector bundles on smooth real affine surfaces and threefolds}
\author{Samuel Lerbet}
\affil{DMA, École normale supérieure, Université PSL, CNRS, 75005 Paris, France}
\date{\today}

\maketitle

\begin{abstract}
We study the cohomological classification of vector bundles on smooth real affine surfaces and threefolds. We show that, as was observed in joint work with A. Asok and J. Fasel and with S. Banerjee and J. Fasel, under suitable cohomological assumptions on the real locus of such varieties, this classification mirrors the one obtained on algebraically closed base fields by Mohan Kumar--Murthy and Asok--Fasel. Using an argument due to Fasel, we also give an efficient proof of a theorem of Kucharz characterising the triples of algebraic cycles that can be realised as the Chern classes of a rank $3$ bundle on a smooth real affine threefold. We further answer the questions left open by Kucharz; to our knowledge, we give the first instance of a projective module over a smooth affine $\Rb$-algebra of dimension $3$ with trivial Chern classes which is not stably free.
\end{abstract}

\section{Introduction}

\paragraph*{Cohomological classification of vector bundles.} If $X$ is a smooth variety over a field $k$ (by variety over $k$, we mean separated $k$-scheme of finite type), we denote by $\CH^p(X)$ the Chow group of codimension $p$ cycles on $X$ modulo rational equivalence and we set $\Ch^p(X)=\CH^p(X)/2$. If $E$ is a vector bundle on $X$, the Chow groups of $X$ house the \emph{Chern classes} $c_p(E)\in\CH^p(X)$ of $E$ defined using, \emph{e.g.}, the axiomatic treatment of \cite{grothendieckTheorieClassesChern1958}. The Chern classes of $E$ detect the rank in the sense that $c_p(E)=0$ if $p>r$ where $r$ is the rank of $E$. Thus denoting by $\Vsc_r(X)$ the collection of isomorphism classes of rank~$r$ vector bundles on $X$, there is a well-defined map \[\varphi_r(X)=(c_1,\ldots,c_r):\Vsc_r(X)\to\prod_{i=1}^r\CH^i(X)\] taking the isomorphism class $\{E\}$ of $E$ to $(c_i(E))_i$.

With a view towards the cohomological classification of vector bundles on smooth affine varieties, one might ask two questions about these maps.
\begin{itemize}
	\item What is the image of $\varphi_r(X)$? Namely, what $r$-tuples of algebraic cycles on $X$ can be realised as the tuple of Chern classes of a rank $r$ vector bundle on $X$?
	\item What are the fibres of $\varphi_r(X)$? For example, under what conditions on $X$ are rank $r$ vector bundles on $X$ determined up to isomorphism by their Chern classes?
\end{itemize}
From now on in this introduction, we will focus on the case where $X$ is a smooth affine threefold. In this situation, when the base field $k$ is algebraically closed, these questions have very satisfactory answers. Indeed, it follows from \cite{kumarAlgebraicCyclesVector1982} and \cite{asokCohomologicalClassificationVector2014} that $\varphi_3(X)$ and $\varphi_2(X)$ are bijections in this case. Moreover, Suslin's cancellation theorem \cite{suslinCancellationTheoremProjective1977} and Serre's splitting theorem \cite[Théorème 1]{serreModulesProjectifsEspaces1957} imply that the stabilisation map $\Vsc_3(X)\to\Vsc_r(X)$ carrying $E$ to $E\oplus\Osc_X^{r-3}$ is bijective for every $r\geqslant 3$ so by stability of Chern classes, namely since $c_*(E)=c_*(E\oplus\Osc_X)$ for every vector bundle $E$, the map $\varphi_r(X)$ is bijective for every $r\geqslant 3$.

Moving now to the case where $k=\Rb$ is the field of real numbers, the situation is more delicate. For instance, consider the real algebraic $3$-sphere $\Sr_\Rb^3=\Spec(\Rb[x,y,z,t]/\langle x^2+y^2+z^2+t^2-1\rangle)$. Since $\Sr_\Rb^3$ is affine, by \cite[Theorem 1.3]{colliot-theleneZerocyclesCohomologyReal1996}, the top Chow group $\CH^3(\Sr_\Rb^3)$ contains as a direct summand a copy of $(\Zb/2)^t$ where $t$ is the number of compact connected components of the real locus $\Sr_\Rb^3(\Rb)$ of $\Sr_\Rb^3$: this real locus is the usual $3$-dimensional sphere in $\Rb^4$ which is certainly compact so $\Zb/2$ injects into $\CH^3(\Sr_\Rb^3)$. In particular, this latter group is nontrivial. On the other hand, by \cite{faselProjectiveModulesReal2011}, every algebraic vector bundle on $\Sr_\Rb^3$ is trivial so the image of $\varphi_3(\Sr_\Rb^3)$ is the triple $(0,0,0)$ of Chern classes of $\Osc_X^3$. Therefore $\varphi_3(\Sr_\Rb^3)$ is not surjective. Moveover, the map $\varphi_2(X)$ is not injective in general: indeed, the tangent bundle $T$ of $\Sr_\Rb^2$ is stably trivial so its Chern classes are trivial, but it is not trivial because of the hairy ball theorem, so $T$ pulls back to a rank $2$ vector bundle $T_{|X}$ on $X=\Sr_\Rb^2\times\Abb^1$ such that $\{T_{|X}\}\neq\{\Osc_X^2\}$ but $\varphi_2(\{T_{|X}\})=(0,0)$. Altogether, this motivates the study of the above questions for threefolds over~$\Rb$. The image of $\varphi_3$ was investigated by Kucharz in \cite{kucharzAlgebraicCyclesVector1988} where the following theorem is proved.

\begin{theonn}[Kucharz]
Let $X$ be a smooth real affine threefold and let $(c_1,c_2,c_3)\in\CH^1(X)\times\CH^2(X)\times\CH^3(X)$. Then denoting by $w_i$ the image of $c_i$ under the mod $2$ cycle class map $\overline{\gamma}^i:\CH^i(X)\to\Hr^i(X(\Rb),\Zb/2)$, the triple $(c_1,c_2,c_3)$ lies in the image of $\varphi_3(X)$ if, and only if, the relation $w_3-w_1\cup w_2-\Sq(w_2)=0$ holds in $\Hr^3(X(\Rb),\Zb/2)$ where $\Sq$ is the first Steenrod square operation.
\end{theonn}

Kucharz then asks whether $\varphi_3(X)$ is in fact injective in the situation of the above theorem. Moving to rank $2$ bundles, given a pair $(c_1,c_2)\in\CH^1(X)\times\CH^2(X)$ where $X$ is a smooth real affine threefold and setting $w_i=\overline{\gamma}^i(c_i)\in\Hr^i(X(\Rb),\Zb/2)$, if there exists a rank~$2$ vector bundle $E$ on $X$ with $c_i(E)=c_i$, then the rank $3$ bundle $E\oplus\Osc_X$ has Chern classes $(c_1,c_2,0)$ hence $\Sq(w_2)=w_1\cup w_2$ in $\Hr^3(X(\Rb),\Zb/2)$ according to the above theorem. Letting $S(X)$ denote the collection of pairs in $\CH^1(X)\times\CH^2(X)$ such that this relation is satisfied, the inclusion $\varphi_2(X)\subseteq S(X)$ thus holds. Kucharz then asks if this inclusion is an equality. For ease of reference, let us record these problems; in the following questions, the letter $X$ denotes a smooth real affine threefold.
\begin{itemize}
	\item[(A)] Is $\varphi_3(X)$ injective?
	\item[(B)] Is the inclusion $\Im\varphi_2(X)\subseteq S(X)$ an equality?
\end{itemize}

Our goal in this note is to study the above theorem, Questions (A) and (B) and related questions on the cohomological classification of vector bundles. In fact, a quick analysis of the Moore--Postnikov tower of the relevant morphism of motivic spaces in Kucharz's theorem shows that on a smooth affine threefold $X$ (over any perfect field with $2$ invertible), there is exactly one obstruction $o(c)$, living in $\Ch^3(X)$, to realising a triple $(c_i)_i$ of classes in Chow groups as the Chern classes of a rank $3$ bundle. Should this obstruction be equal over any field to $\overline{c_3}-\overline{c_1}\cdot\overline{c_2}-\Sq(\overline{c_2})$ where $\Sq$ is the first Steenrod square of Voevodsky \cite{voevodskyMotivicCohomology2coefficients2003} or Brosnan \cite{brosnanSteenrodOperationsChow2003}, the results of \cite{colliot-theleneZerocyclesCohomologyReal1996} would then imply Kucharz's theorem. This identification of $o(c)$, which would yield a generalisation of Kucharz's result to a claim over arbitrary fields, was part of the motivation for our work. In any case, over $\Rb$, our first main result is a positive answer to Question (B):

\begin{theonn}[Theorem \ref{theo:question_B_kucharz}]
Let $X$ be a smooth real affine threefold and let $(c_1,c_2)\in\CH^1(X)\times\CH^2(X)$ be a pair of algebraic cycles; set $w_i=\overline{\gamma}^i(X)$. Then there exists a rank $2$ vector bundle $E$ on $X$ whose Chern classes satisfy $c_i(E)=c_i$ if, and only if, the relation $\Sq(w_2)+w_1\cup w_2=0$ holds in $\Hr^3(X(\Rb),\Zb/2)$.
\end{theonn}

The proof uses motivic obstruction theory, and in particular the Postnikov tower of $\BGL_2$ which was investigated in detail in \cite{asokOBSTRUCTIONSALGEBRAIZINGTOPOLOGICAL2019} to study the obstructions to algebraising rank $2$ complex vector bundles on a smooth complex affine fourfold, and results from \cite{lerbetImageHigherSignature2026} on the cohomology of real algebraic varieties. The situation is less pleasant for Question (A) which, as a consequence of the following theorem, admits a negative answer.

\begin{theonn}[Proposition \ref{prop:ce_qu_A}]
There exists a smooth real affine threefold $U$ and a rank $2$ vector bundle $E$ on $U$ such that $c_i(E)=0$ for $i=\{1,2\}$ but $E$ is \emph{not} stably trivial: for every $n\geqslant 0$, the bundle $E\oplus\Osc_U^n$ is nontrivial.
\end{theonn}

In particular, if $U$ and $E$ are as in this theorem, then the rank $3$ bundles $E\oplus\Osc_U$ and $\Osc_U^3$ both have trivial Chern classes, but are not isomorphic. Note that the usual example of a nontrivial bundle with trivial Chern classes, namely the tangent bundle to $\Sr_\Rb^2$, becomes trivial after addition of a free rank $1$ bundle (the normal bundle of the inclusion $\Sr_\Rb^2\hookrightarrow\Abb_\Rb^3$) and thus does not yield an example of rank $3$ as in the above theorem (working implicitly over $\Sr_\Rb^2\times\Abb^1$ to obtain a threefold). To our knowledge, the above theorem gives the first example of a projective module over a smooth real affine algebra of dimension $3$ whose Chern classes are trivial, but which is not stably free. The smooth affine variety $U$ and the vector bundle $E$ used in the proof of the above theorem were constructed in \cite{asokSplittingVectorBundles2025}. In particular, the (reduced) $K$-theory class $[E]$ of $E$ can be deduced from the computations performed in that paper, and we show by explicit calculations in $K$-theory that $[E]$ is nonzero.

In \cite{kucharzAlgebraicCyclesVector1988}, to motivate Question (A), Kucharz observes that the answer to this question is positive if the extension of scalars $X_\Cb=X\times_\Rb\Spec\Cb$ of $X$ to $\Cb$ is rational and $X(\Rb)$ has no compact connected component. Although the answer to Question (A) is negative in general, in accordance with the philosophy developed in \cite{asokSplittingVectorBundles2025,banerjeeSuslinsCancellationConjecture2026} we show in this paper that Kucharz's topological condition is in fact the crucial one.

\begin{theonn}[Example \ref{exe:classification_no_compact_connected_component_surface}, Theorem \ref{theo:classification_by_chern_classes}]
Let $d\leqslant 3$ and let $X$ be a smooth affine $\Rb$-variety of dimension~$d$. If $X(\Rb)$ has no compact connected component, then $\varphi_d(X)$ is injective.
\end{theonn}

To compare this result to \cite{asokSplittingVectorBundles2025,banerjeeSuslinsCancellationConjecture2026}, note that the singular cohomology group $\Hr^d(X(\Rb),\Zb/2)$ is the $\Zb/2$-vector space generated by the compact connected components of $X(\Rb)$ so the assumption that $X(\Rb)$ has no compact connected component is equivalent to the vanishing of $\Hr^d(X(\Rb),\Zb/2)$. Hence it expresses that the real locus of $X$ is ``cohomologically small'', and is precisely the kind of hypotheses that appear in \cite[Theorems 5.1.1 and 5.2.1]{asokSplittingVectorBundles2025} and that are introduced in \cite[§2.1]{banerjeeSuslinsCancellationConjecture2026}.

\paragraph*{Contents.} The paper is organized as follows. Section \ref{sec:preliminaries} is preliminary and collects notations used in the article. In Section \ref{sec:question_B}, we study the cohomological classification of rank $2$ bundles on smooth real affine surfaces, revisiting and generalising the main result of \cite{bargeFibresAlgebriquesSurface1987} and using results of \cite{asokSplittingVectorBundles2025} to slightly generalise their arguments. We then answer Question (B) positively and we investigate analogues of the cohomological classification obtained in \cite{asokCohomologicalClassificationVector2014} for rank $2$ bundles on smooth real affine threefolds with small real locus; as noted in Remark \ref{rema:hypotheses_classification_rank_2}, they are essentially optimal. In Section \ref{sec:rank_3_bundles}, we present a proof of a generalisation of Kucharz's result over arbitrary fields communicated to us by Fasel. We then show that the answer to Question (A) is negative in general. We further observe that the injectivity of $\varphi_3(X)$ is intimately related to the topology of the real locus by proving that $\varphi_3(X)$ is injective if $X$ is a smooth real threefold such that $X(\Rb)$ has no compact connected component, removing the rationality condition from Kucharz's observation in this direction.

\paragraph*{Acknowledgements.} We thank Jean Fasel for very useful discussions and for providing us with the proof of Theorem \ref{theo:Kucharz_over_a_field}, and Olivier Benoist for encouraging us to write up these results. The author was supported by ANR project CYCLADES, grant number ANR-23-CE40-0011, during his work on this project.

\section{Preliminaries}\label{sec:preliminaries}

Unless otherwise specified, by $\Hr^*(X,\Abf)$, we denote the Nisnevich cohomology of a Noetherian scheme $X$ with coefficients in the sheaf $\Abf$ on the small Nisnevich site $X_\Nis$ of $X$ (we usually write that $\Abf$ is a sheaf on $X$ by abuse of terminology). We recall that $X$ is smooth over a field, then the Nisnevich cohomological dimension of $X$ is bounded above by its Krull dimension $\dim(X)$ so $\Hr^j(X,\Abf)=0$ for $j>\dim(X)$ for every sheaf $\Abf$ on $X_\Nis$.

If $X$ is a smooth variety over a field, we endow the $\Kr$-theory group $\Kr_0(X)$ of $X$ with the filtration $\Fr^\bullet\Kr_0(X)$ given by the codimension of the support: the group $\Fr^p\Kr_0(X)$ consists of those classes $\alpha\in\Kr_0(X)$ such that there exists a closed subscheme $Z$ of $X$ of codimension $\geqslant p$ such that $\alpha_{|X\setminus Z}=0$. We denote the graded pieces of this filtration by $\Gra^p\Kr_0(X)=\Fr^p\Kr_0(X)/\Fr^{p+1}\Kr_0(X)$.

We use the Steenrod square $\Sq$ defined by Voevodsky in \cite{voevodskyMotivicCohomology2coefficients2003} on Chow groups mod~$2$. Given a smooth variety $X$ over a field, if $\Lc\in\Pic(X)$ has mod $2$ first Chern class $\overline{c_1}$, we denote by $\Sq_\Lc$ or $\Sq_{\overline{c_1}}$ the first Steenrod square $\Ch^*(X)\to\Ch^{*+1}(X)$ twisted by $\Lc$, defined by $\Sq_\Lc=\Sq_{\overline{c_1}}=\Sq+\overline{c_1}\cdot(\text{--})$. We use the same notation in topology: if $L$ is a real topological line bundle on a smooth manifold and $w_1$ is its first Stiefel--Whitney class, we set $\Sq_L=\Sq_{w_1}=\Sq+w_1\cup(\text{--})$.

We denote by $\Kbf_*^\MW$ the unramified Milnor--Witt $\Kr$-theory sheaf of $\Zb$-graded rings, by $\Kbf_*^\Mr$ the unramified Milnor $\Kr$-theory sheaf and by $\Ibf^*$ the graded sheaf of powers of the fundamental ideal; we refer to \cite[Chapter 3]{morelA1AlgebraicTopologyField2012} for the definition of these sheaves. If $X$ is a smooth variety and $\Lc$ is a line bundle on $X$, these sheaves can be twisted by $\Lc$ into sheaves $\Kbf_*^\MW(\Lc)$ and $\Ibf^*(\Lc)$ on $X$ (twisting does not change the sheaf $\Kbf_*^\Mr$) and there is an exact sequence \[0\to\Ibf^{*+1}(\Lc)\to\Kbf_*^\MW(\Lc)\to\Kbf_*^\Mr\to 0\] of sheaves on $X$. We set $\widetilde{\CH}^c(X,\Lc)=\Hr^c(X,\Kbf_c^\MW(\Lc))$ and note that $\Hr^c(X,\Kbf_c^\Mr)=\CH^c(X)$.

If $X$ is a smooth variety over $\Rb$, we endow its real locus $X(\Rb)$ with the Euclidean topology for which it underlies a smooth manifold; cohomology of $X(\Rb)$ is then sheaf cohomology. We denote the mod~$2$ cycle class map from Chow groups to mod~$2$ cohomology by $\overline{\gamma}^i:\CH^i(X)\to\Hr^i(X(\Rb),\Zb/2)$ and we use the same notation for its factorisation by $\Ch^i(X)$. Steenrod operations on Chow groups mod~$2$ and mod~$2$ cohomology are compatible: if $\Lc$ is a line bundle on $X$ with associated real topological line bundle $L=\Lc(\Rb)$ on $X(\Rb)$, then $\Sq_L\circ\overline{\gamma}^i=\overline{\gamma}^{i+1}\circ\Sq_\Lc$ (see for instance \cite[Lemma 4.1.1]{asokSplittingVectorBundles2025}). Over $\Rb$, the groups $\Hr^*(X,\Ibf^\star(\Lc))$ and $\Hr^*(X,\Kbf_\star^\MW(\Lc))$ support \emph{quadratic real cycle class maps} \[\gamma_\star^*:\Hr^*(X,\Ibf^\star(\Lc))\to\Hr^*(X(\Rb),\Zb(\Lc(\Rb))),\;\widetilde{\gamma}_\star^*:\Hr^*(X,\Kbf_\star^\MW(\Lc))\to\Hr^*(X(\Rb),\Zb(\Lc(\Rb)))\] described, \emph{e.g.}, in \cite[§2.5]{lerbetImageHigherSignature2026} and which are compatible with the maps $\overline{\gamma}^*$ \cite[diagram before Lemma 2.29]{lerbetImageHigherSignature2026}.

\section{Rank 2 vector bundles}\label{sec:question_B}

\subsection{Motivic obstruction theory}

We use the formalism of Moore--Postnikov towers in motivic homotopy theory as laid out, \emph{e.g.}, in \cite[§6.1]{asokSplittingVectorBundles2014}. We refer to this work for the notions of Eilenberg--Mac Lane spaces, (homotopy) fibres and pullbacks and so on. If $X$ is a presheaf of spaces, we let $X_+$ denote $X$ with a disjoint added base point; if $X$ is any pointed presheaf, we let $\pi_n(X)$ denote the $n$-th homotopy sheaf of its image under the localisation functor from presheaves of spaces to motivic spaces.

We write $\BGL_r$ for the motivic localisation of the Nisnevich classifying space $\Br_\Nis\GL_r$ for the sheaf of groups $\GL_r$. In this section, we denote by $(\BGL_2^{(n)})_{n\geqslant 1}$ the Postnikov tower of $\BGL_2$. The $k_1$-invariant $\BGL_2\to\BGL_2^{(1)}=\Br\pi_1(\BGL_2)=\BGm$ induces a map $\det:[X,\BGL_2]_{\Abb^1}\to[X,\BGm]_{\Abb^1}$ for any pointed motivic space $X$. If $U$ is a smooth affine $k$-scheme, then by the affine $\Abb^1$-representability of vector bundles (\cite{morelA1AlgebraicTopologyField2012,schlichtingEulerClassGroups2017,asokAffineRepresentabilityResults2017}), there is a pointed bijection $\Vsc_2(U)\cong[U_+,\BGL_2]_{\Abb^1}$ modulo which $\det:[U_+,\BGL_2]_{\Abb^1}\to[U_+,\BGm]_{\Abb^1}\cong\Pic(U)$ is the determinant map. Since the canonical map $i_2:\BGL_2\to\BGL_2^{(2)}$ is $2$-connected, the $k_1$-invariant also induces a map $\det:[U_+,\BGL_2^{(2)}]_{\Abb^1}\to\Pic(U)$. Moreover, by \cite[Proposition 6.3]{asokCohomologicalClassificationVector2014}, if $\Lc$ is a line bundle on $X$, then the $k_2$-invariant induces a bijection \[[U_+,\BGL_2^{(2)}]_{\Abb^1}\supseteq\mathrm{det}^{-1}(\{\Lc\})\xrightarrow[\cong]{(k_2)_*}\widetilde{\CH}^2(U,\Lc).\] The triangle
\begin{equation}\label{eq:classification_rank_2_bundles}
\begin{tikzcd}
	{\Vsc_2(U)\cong[U_+,\BGL_2]_{\Abb^1}} && {[U_+,\BGL_2^{(2)}]_{\Abb^1}} \\
	& {\Pic(U)}
	\arrow["{(i_2)_*}", from=1-1, to=1-3]
	\arrow["\det"', from=1-1, to=2-2]
	\arrow["\det", from=1-3, to=2-2]
\end{tikzcd}
\end{equation}
is then commutative since both diagonal maps are induced by $k_1$-invariants. Letting $\Vsc_2(U,\Lc)$ denote the fibre of $\det:\Vsc_2(U)\to\Pic(U)$ over the isomorphism class of $\Lc$, the induced map $\Vsc_2(U,\Lc)\to[U_+,\BGL_2^{(2)}]_{\Abb^1}\xrightarrow{(k_2)_*}\widetilde{\CH}^2(U,\Lc)$ on fibres over the isomorphism class of $\Lc$ is Morel's Euler class $e$.

\subsection{On surfaces}\label{subsection:rank_2_bundles_on_surfaces}

Let $X$ be a smooth affine surface over a perfect field $k$ of characteristic not $2$. Then since $X$ has dimension $2$, by \cite[Proposition 6.2]{asokCohomologicalClassificationVector2014}, the canonical map $(i_2)_*:\Vsc_2(X)\cong[X_+,\BGL_2]_{\Abb^1}\to[X_+,\BGL_2^{(2)}]_{\Abb^1}$ is bijective. This bijection is compatible with determinant maps in view of the commutative triangle in (\ref{eq:classification_rank_2_bundles}) so the Euler class $e:\Vsc_2(X,\Lc)\to\widetilde{\CH}^2(U,\Lc)$ is a bijection for every line bundle $\Lc$ on $X$.

\begin{lem}\label{lem:surjectivity_rank_2_surfaces}
Let $X$ be a smooth affine surface over a perfect field $k$ of characteristic not $2$. Then $\varphi_2(X):\Vsc_2(X)\to\CH^1(X)\times\CH^2(X)$ is surjective.
\end{lem}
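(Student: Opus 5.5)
Fix $(c_1,c_2)\in\CH^1(X)\times\CH^2(X)$. Since $X$ is smooth, $\CH^1(X)\cong\Pic(X)$, so there is a line bundle $\Lc$ with $c_1(\Lc)=c_1$. Any rank $2$ bundle $E$ with $\det E\cong\Lc$ has $c_1(E)=c_1(\det E)=c_1$. It therefore suffices to find $E\in\Vsc_2(X,\Lc)$ with $c_2(E)=c_2$.

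We use the bijection established just before the statement: the Euler class $e:\Vsc_2(X,\Lc)\to\widetilde{\CH}^2(X,\Lc)$ is bijective. The second step is to relate $e$ to $c_2$. The forgetful map $\widetilde{\CH}^2(X,\Lc)=\Hr^2(X,\Kbf_2^\MW(\Lc))\to\Hr^2(X,\Kbf_2^\Mr)=\CH^2(X)$ is induced by the sheaf surjection $\Kbf_2^\MW(\Lc)\to\Kbf_2^\Mr$. The standard fact (Asok--Fasel) is that this map carries Morel's Euler class to the top Chern class, so $e(E)\mapsto c_2(E)$. This is essentially because the composite $\BGL_2\to K(\Kbf_2^\MW(\det),2)\to K(\Kbf_2^\Mr,2)$ represents $c_2$. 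We would quote this identification from \cite{asokCohomologicalClassificationVector2014}.

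It then remains to show that $\widetilde{\CH}^2(X,\Lc)\to\CH^2(X)$ is surjective. The short exact sequence $0\to\Ibf^3(\Lc)\to\Kbf_2^\MW(\Lc)\to\Kbf_2^\Mr\to0$ gives a long exact sequence whose next term after $\CH^2(X)$ is $\Hr^3(X,\Ibf^3(\Lc))$. This group vanishes because the Nisnevich cohomological dimension of $X$ is at most $\dim X=2$, as recalled in Section~\ref{sec:preliminaries}. Hence every $c_2$ lifts to some $\widetilde{c}\in\widetilde{\CH}^2(X,\Lc)$. Bijectivity of $e$ then gives $E\in\Vsc_2(X,\Lc)$ with $e(E)=\widetilde{c}$, so $c_2(E)=c_2$ and $c_1(E)=c_1$.

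The only point needing care is the compatibility of the Euler class with $c_2$ under the forgetful map. I expect to cite it rather than reprove it. If a direct argument were needed, one would reduce to the case where $E$ has a section vanishing on a codimension-$2$ locus, compute both classes there, and use that the reduction is possible on an affine surface.
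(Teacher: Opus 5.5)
Your proposal is correct and follows essentially the same route as the paper. Both arguments use the Euler class bijection $e:\Vsc_2(X,\Lc)\to\widetilde{\CH}^2(X,\Lc)$, the surjectivity of $\widetilde{\CH}^2(X,\Lc)\to\CH^2(X)$ (its cokernel lies in $\Hr^3(X,\Ibf^3(\Lc))=0$), and the identification of the composite with $c_2$; the paper cites Morel's Remark 7.22 for that last point rather than Asok--Fasel.
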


\begin{proof}
Let $\Lc$ be a line bundle on $X$. The cokernel of the comparison map $\widetilde{\CH}^2(X,\Lc)\to\CH^2(X)$ is a subgroup of $\Hr^3(X,\Ibf^3(\Lc))$ which vanishes for cohomological dimension reasons since $X$ is a surface so this comparison map is surjective. The composite \[\Vsc_2(X,\Lc)\xrightarrow{e}\widetilde{\CH}^2(X,\Lc)\to\CH^2(X)\] is then the second Chern class by \cite[Remark 7.22]{morelA1AlgebraicTopologyField2012} so that $c_2:\Vsc_2(X,\Lc)\to\CH^2(X)$ is surjective on each fibre over $\Pic(X)$. This is a reformulation of the surjectivity of $\varphi_2(X)$.
\end{proof}

If $E$ is a rank $2$ vector bundle on $X$, then $E$ underlies an alternating form $\varphi_E:E\otimes E\to\det E$ carrying $x\otimes y$ to $x\wedge y$. This gives rise to a map 
\begin{equation}\label{eq:unstable_to_stable}
\Phi:\Vsc_2(X,\Lc)\to\widetilde{\GW}^2(X,\Lc)
\end{equation}
taking $\{E\}$ to $[E,\varphi_E]-[\Hr_\Lc(\Osc)]$, where $\GW^2(X,\Lc)$ is the Grothendieck--Witt group of alternating forms on $X$, the subgroup $\widetilde{\GW}^2(X,\Lc)\subseteq\GW^2(X,\Lc)$ consists of the rank $0$ forms and $\Hr_\Lc(\Osc_X)$ is the $\Lc$-valued symplectic hyperbolic form on $\Osc_X$.

\begin{lem}\label{lem:unstable_to_stable_rank_2_surfaces}
The map $\Phi:\Vsc_2(X,\Lc)\to\widetilde{\GW}^2(X,\Lc)$ is a bijection.
\end{lem}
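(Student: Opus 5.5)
We will factor $\Phi$ through the Euler class bijection $e:\Vsc_2(X,\Lc)\to\widetilde{\CH}^2(X,\Lc)$ established above. Concretely, we aim to build a commutative triangle
\[\Vsc_2(X,\Lc)\xrightarrow{\Phi}\widetilde{\GW}^2(X,\Lc)\xrightarrow{\;\epsilon\;}\widetilde{\CH}^2(X,\Lc),\qquad \epsilon\circ\Phi=e,\]
where $\epsilon$ is the (twisted) symplectic Euler class, or ``top Borel class'', on the reduced Grothendieck--Witt group. We will then show that $\epsilon$ is an isomorphism. Since $e$ is bijective, it follows that $\Phi$ is bijective.

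The motivic input is the comparison $\BGL_2\simeq\mathrm{BSL}_2$ twisted by $\det$, with $\mathrm{SL}_2=\mathrm{Sp}_2$. Over a fixed determinant $\Lc$, $\Vsc_2(X,\Lc)$ is identified with the set of rank $2$ bundles with an $\Lc$-valued symplectic form up to isometry, since every rank $2$ bundle carries the canonical form $\varphi_E$, and isometry classes correspond to isomorphism classes over the fixed determinant. The map $\Phi$ is then the stabilisation map $[X_+,\mathrm{BSp}_2]\to[X_+,\mathrm{BSp}_\infty]$ in the $\Lc$-twisted setting, which is representable via Hermitian $K$-theory (Schlichting) with $\widetilde{\GW}^2(X,\Lc)=[X_+,\mathrm{BSp}]$ twisted. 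The canonical map $\mathrm{BSp}_2\to\mathrm{BSp}$ has homotopy fibre $\mathrm{Sp}/\mathrm{Sp}_2$. Its first nontrivial homotopy sheaf lives in degree $4$, since $\mathrm{Sp}_4/\mathrm{Sp}_2\simeq\Abb^4\setminus 0$ is $\Abb^1$-$2$-connected and higher stages are more connected. So the map is $\Abb^1$-$3$-connected, or at least $2$-connected, with first obstruction groups $\Hr^i(X,\pi_{i}(\text{fibre}))$ and $\Hr^{i+1}(X,\pi_i(\text{fibre}))$ for $i\geqslant 2$. These vanish on a surface by the cohomological dimension bound recalled in Section~\ref{sec:preliminaries}. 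The $\Lc$-twisted version is handled by working with the twisted forms of these spaces over $\BGm$, or equivalently Zariski-locally where $\Lc$ is trivial, followed by the standard twisted Moore--Postnikov argument of \cite[§6.1]{asokSplittingVectorBundles2014}.

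Concretely, we will run the Moore--Postnikov tower of $\mathrm{BSp}_2\to\mathrm{BSp}$ (twisted by $\Lc$) and observe that all obstruction and fibre-ambiguity groups are $\Hr^j(X,-)$ with $j\geqslant 3$. These vanish on the surface $X$, so the induced map on $[X_+,-]$ is bijective. This gives the bijectivity of $\Phi$ directly, and the triangle with $e$ is then only a consistency check. The point requiring care is identifying $\Phi$, which is defined by $\{E\}\mapsto[E,\varphi_E]-[\Hr_\Lc(\Osc)]$, with the map induced by stabilisation under affine representability of twisted Hermitian $K$-theory. We will argue this via the representability of $\GW^2(-,\Lc)$ on affine schemes and the naturality of the hyperbolic shift.

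The main obstacle is the connectivity estimate for $\mathrm{Sp}_{2n}/\mathrm{Sp}_2$ together with its twisted analogue. We would first try the fibre sequences $\mathrm{Sp}_{2n-2}\to\mathrm{Sp}_{2n}\to\Abb^{2n}\setminus 0$ and Morel's computation that $\Abb^{2n}\setminus0$ is $(2n-2)$-$\Abb^1$-connected. Failing a clean twisted statement, a fallback is the alternative route through $\epsilon$. This would use the Gersten--Grothendieck--Witt spectral sequence in dimension $2$ to show $\widetilde{\GW}^2(X,\Lc)\cong\widetilde{\CH}^2(X,\Lc)$, since on a surface only the $E_2^{2,-2}$ term contributes to rank zero symplectic classes.
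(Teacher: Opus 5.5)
Your fallback is exactly the paper's proof. The paper uses the Gersten--Grothendieck--Witt spectral sequence (via \cite[Example 1.2.5]{asokSplittingVectorBundles2025}) to show that the edge map $b_1:\widetilde{\GW}^2(X,\Lc)\to\widetilde{\CH}^2(X,\Lc)$ is an isomorphism. It identifies $b_1\circ\Phi$ with the Chow--Witt Euler class, which equals Morel's $e$ by \cite{asokComparingEulerClasses2016}, and concludes from the bijectivity of $e$ on surfaces.

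Your primary route is genuinely different and correct in outline. It is a stable-range argument for symplectic stabilisation that never passes through $\widetilde{\CH}^2$. Since $\mathrm{Sp}_{2n}/\mathrm{Sp}_{2n-2}\simeq\Abb^{2n}\setminus 0$ is $(2n-2)$-connected, the fibre $\mathrm{Sp}_\infty/\mathrm{Sp}_2$ is $\Abb^1$-$2$-connected. Hence every obstruction and ambiguity group is some $\Hr^j(X,-)$ with $j\geqslant 3$, and these vanish on a surface.
\begin{itemize}
\item What your route buys: it is independent of the Borel class/Euler class identifications and of the spectral sequence computation, and it gives symplectic cancellation in all ranks $2n\geqslant 2$ on surfaces.
\item What it costs: affine representability of twisted symplectic $K$-theory, and an identification of $\Phi$ with the stabilisation map.
\item What it does not give: the compatibility $b_1\circ\Phi=e$. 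The paper wants exactly this, to tie the subsection to Barge--Ojanguren.
\end{itemize}

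Some points need fixing.
\begin{itemize}
\item The first nontrivial homotopy sheaf of $\Abb^4\setminus 0$ is $\pi_3=\Kbf_4^\MW$, not a sheaf in degree $4$. What you actually use is that the fibre is $2$-connected. Your hedge that the map is ``at least $2$-connected'' would not suffice, since it leaves $\Hr^2(X,\pi_2)$ uncontrolled.
\item The twisted case cannot be handled ``Zariski-locally where $\Lc$ is trivial'', because bijectivity on $\Abb^1$-homotopy classes does not glue. Instead, run the argument for $\BGL_2=\mathrm{BGSp}_2\to\mathrm{BGSp}_{2n}$, whose fibre is still $\mathrm{GSp}_{2n}/\mathrm{GSp}_2\cong\mathrm{Sp}_{2n}/\mathrm{Sp}_2$. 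Consider lifts of the classifying map of $\Lc$ along the multiplier map to $\BGm$, use affine representability of $\mathrm{GSp}_{2n}$-torsors (Asok--Hoyois--Wendt), and pass to the colimit. Finally, check that stable isometry classes of $\Lc$-valued symplectic bundles are exactly $\widetilde{\GW}^2(X,\Lc)$.
\item Your claim that isometry classes correspond to isomorphism classes over a fixed determinant is only true modulo rescaling the form by units of $\Osc(X)$. This is the difference between plain homotopy classes and homotopy classes over a fixed map to $\BGm$. Over $\Rb$ the $\langle u\rangle$-action on $\widetilde{\CH}^2(X,\Lc)$ can be nontrivial, so this matters. The paper's $\Phi$ tacitly fixes an isomorphism $\det E\cong\Lc$ in the same way, so this is not a defect relative to the paper. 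You should nevertheless say which version you prove.
\end{itemize}
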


\begin{proof}
This can be checked directly, but we give an argument that will make the link between the results of this subsection and those of \cite{bargeFibresAlgebriquesSurface1987} clear. The Gersten--Grothendieck--Witt spectral sequence \[\Er(2)_1^{p,q}=\bigoplus_{x\in X^{(p)}}\GW_{2-p-q}^{2-p}(\kappa(x),\omega_{x/X}\otimes\Lc(x))\Rightarrow\GW_{2-(p+q)}^2(X,\Lc)\] (see \cite[§3.1]{faselChowWittGroups2009}) converges to the groups $\GW_{2-*}^2(X,\Lc)$ filtered by codimension of the support. The arguments of \cite[Example 1.2.5]{asokSplittingVectorBundles2025} show that the edge homomorphism $\widetilde{\GW}^2(X,\Lc)\to\Er(2)_2^{2,0}=\widetilde{\CH}^2(X,\Lc)$ is an isomorphism. It is known to coincide with the first Borel class $b_1$ of \cite{paninMotivicCommutativeRing2019}, hence the composite \[\Vsc_2(X,\Lc)\to\widetilde{\GW}^2(X,\Lc)\to\widetilde{\CH}^2(X,\Lc)\] is the (Chow--Witt) Euler class $e_{cw}$. On the other hand, Morel's Euler class, which coincides with $e_{cw}$ by \cite{asokComparingEulerClasses2016}, induces a bijection $e:\Vsc_2(X,\Lc)\to\widetilde{\CH}^2(X,\Lc)$ as already discussed. The Euler class and the first Borel class are then known to coincide for rank $2$ bundles so the triangle
\[\begin{tikzcd}
	{\Vsc_2(X,\Lc)} && {\widetilde{\GW}^2(X,\Lc)} \\
	& {\widetilde{\CH}^2(X,\Lc)}
	\arrow["\Phi", from=1-1, to=1-3]
	\arrow["e"', from=1-1, to=2-2]
	\arrow["{b_1}", from=1-3, to=2-2]
\end{tikzcd}\]
commutes. The diagonal maps $e$ and $b_1$ in the above diagram are bijections hence the top horizontal map is also bijective as required.
\end{proof}

Let $X$ be a smooth real affine surface. The group $\widetilde{\GW}^2(X)$ (with twist $\Lc=\Osc_X$) coincides with the reduced symplectic $\Kr$-theory group $\widetilde{\Kr}_0^{\mathrm{Sp}}(X)$ described in \cite[§6]{bargeFibresAlgebriquesSurface1987} as an extension of the Witt group $\Wr^2(X)$ of the exact category of alternating forms on $X$ (in the sense of \cite{balmerTriangularWittGroups2001}) by a quotient of the reduced $\Kr$-theory group $\widetilde{\Kr}_0(X)$. Barge and Ojanguren show that the alternating part $\Wr^2(X)$ is determined by essentially topological data on the real locus $X(\Rb)$ (this is strictly true modulo $2$-torsion). More precisely, they prove the following theorem (\cite[Théorème 6.2]{bargeFibresAlgebriquesSurface1987}).

\begin{theo}[Barge--Ojanguren]\label{theo:W2_surface}
Let $X$ be a smooth variety of dimension $2$ over $\Rb$. Let $\Lc$ be a line bundle on $X$ and denote by $L$ the associated real topological line bundle on $X(\Rb)$. Let $d_L:\Hr^1(X(\Rb),\Zb/2)\to\Hr^2(X(\Rb),\Zb(L))$ be the Bockstein homomorphism, that is, the connecting homomorphism for the cohomology long exact sequence determined by the epimorphism $\Zb(L)\to\Zb/2$ of sheaves. If $X$ is not proper or $X(\Rb)$ is not empty, then there is a canonical isomorphism $\Wr^2(X,\Lc)\cong\Hr^2(X(\Rb),\Zb(L))/d_L(\Hr_\alg^1(X(\Rb),\Zb/2))$ where $\Hr_\alg^1(X(\Rb),\Zb/2)$ is the image of the mod $2$ cycle class map $\overline{\gamma}^1:\CH^1(X)\to\Hr^1(X(\Rb),\Zb/2)$. If $X$ is proper and $X(\Rb)=\emptyset$, then $\Wr^2(X,\Lc)=\Coker(\Sq_\Lc:\Ch^1(X)\to\Ch^2(X))$.
\end{theo}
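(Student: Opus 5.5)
The plan is to compute $\Wr^2(X,\Lc)$ through the Gersten--Witt (Balmer--Walter) spectral sequence, identify the resulting $\Ibf$-cohomology groups with the cohomology of $X(\Rb)$, and then treat the exceptional case separately. In the Gersten--Witt spectral sequence for the shifted Witt groups of $X$ with twist $\Lc$, the $\Er_1$-terms are the Witt groups $\Wr(\kappa(x),\omega_{x/X}\otimes\Lc(x))$ of points $x$ of codimension $p$, sitting on the lines $q\equiv 0 \pmod 4$. Since $\dim X=2$, only the columns $p=0,1,2$ are nonzero, and only one row contributes to total degree $2$. It follows that $\Wr^2(X,\Lc)\cong\Hr^2(X,\Wbf(\Lc))$, the Nisnevich (equivalently, Zariski) cohomology of the Witt sheaf, with no extension problems; the other terms of total degree $2$ would have to come from degrees $p=2\pm4$, which are zero.

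Next I would use the filtration $\Ibf^j(\Lc)\subseteq\Wbf(\Lc)$. For $j\geqslant 2$ the quotients $\Ibf^j/\Ibf^{j+1}$ are the sheaves $\Kbf^\Mr_j/2$. By the Milnor conjecture and Bloch--Ogus, their $\Hr^2$ vanishes once $j\geqslant 3$, because the Bloch--Ogus theory identifies it with an unramified étale cohomology group of too high degree. More robustly, the sheaves $\Ibf^j$ for $j\gg 0$ satisfy $\Hr^2(X,\Ibf^j(\Lc))\cong\Hr^2(X(\Rb),\Zb(L))$ via the real cycle class map (Jacobson's theorem: $\Ibf^j\to\Ibf^{j+1}$ is an isomorphism for $j>\dim X$, and $\Ibf^\infty$ is the sheaf of continuous $\Zb$-valued functions on the real spectrum). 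Combining this with the statement for the surjection $\Hr^2(X,\Ibf^2(\Lc))\to\Hr^2(X,\Wbf(\Lc))$, the goal becomes to compute the cokernel of $\Hr^2(X,\Ibf^{j+1})\to\Hr^2(X,\Ibf^j)$ for small $j$. Concretely, I would use the exact sequences
\[\Hr^1(X,\Ibf^j/\Ibf^{j+1})\to\Hr^2(X,\Ibf^{j+1}(\Lc))\to\Hr^2(X,\Ibf^j(\Lc))\to\Hr^2(X,\Ibf^j/\Ibf^{j+1})\to 0.\]
For $j=0,1$ the right-hand term $\Hr^2(X,\Kbf^\Mr_j/2)$ vanishes for degree reasons, since the Gersten complex has no codimension-$2$ term in those weights. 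Hence $\Hr^2(X,\Wbf(\Lc))=\Hr^2(X,\Ibf(\Lc))$ is a quotient of $\Hr^2(X,\Ibf^2(\Lc))$, which in turn is a quotient of $\Hr^2(X,\Ibf^3(\Lc))\cong\Hr^2(X(\Rb),\Zb(L))$ by the image of the connecting map from $\Hr^1(X,\Kbf^\Mr_1/2)=\Ch^1(X)$ at the level $j=1$, and similarly for $j=2$. I would identify the connecting map $\Ch^1(X)=\Hr^1(X,\Kbf^\Mr_1/2)\to\Hr^2(X,\Ibf^2(\Lc))$ as the $\Ibf$-cohomology Bockstein. Under the real cycle class map, it becomes the topological Bockstein $d_L\circ\overline{\gamma}^1$, by the compatibility of $\gamma$ with $\overline{\gamma}$ recorded in the preliminaries. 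This gives exactly $\Hr^2(X(\Rb),\Zb(L))/d_L(\Hr^1_\alg)$.

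The main obstacle is the isomorphism $\Hr^2(X,\Ibf^j(\Lc))\cong\Hr^2(X(\Rb),\Zb(L))$ for the relevant $j$, together with ensuring that the connecting maps at the intermediate levels contribute nothing beyond $d_L(\Hr^1_\alg)$. For the first I would invoke Jacobson's theorem combined with the comparison results of \cite{lerbetImageHigherSignature2026}, where $\gamma^*_\star$ is shown to be an isomorphism in degree $\star>\dim X$. For the second, I would check that the connecting map from $\Hr^1(X,\Kbf^\Mr_2/2)$ is already killed, or is absorbed because it is surjective onto the kernel, using that $\Hr^1(X,\Kbf_2^\Mr/2)\to\Hr^1(X(\Rb),\Zb/2)$ lands in the algebraic classes.

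Finally, in the exceptional case where $X$ is proper and $X(\Rb)=\emptyset$, the powers $\Ibf^j$ eventually vanish, since $-1$ is a sum of squares. The filtration then terminates at $\Ibf^2$, so $\Wr^2(X,\Lc)=\Hr^2(X,\Ibf^2/\Ibf^3)=\Ch^2(X)$ modulo the image of the connecting map from $\Ch^1(X)$. That connecting map is Totaro's identification of the $\Ibf$-Bockstein composite with the twisted Steenrod square $\Sq_\Lc$, which yields $\Coker(\Sq_\Lc)$.
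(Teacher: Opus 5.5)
\textbf{There is a genuine gap in your first case, at the step where you pass from $\Ibf^3$ to $\Ibf^2$.}

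You claim that $\Hr^2(X,\Ibf^2(\Lc))$ is a quotient of $\Hr^2(X,\Ibf^3(\Lc))$. This is false. In your own exact sequence for $j=2$, the last term is $\Hr^2(X,\Ibf^2/\Ibf^3)=\Ch^2(X)$, which is nonzero in general. Your earlier assertion that $\Hr^2(X,\Kbf_j^\Mr/2)$ vanishes for $j\geqslant 3$ is also wrong over $\Rb$: by Colliot-Th\'el\`ene--Parimala these groups are isomorphic to $\Hr^2(X(\Rb),\Zb/2)$.

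What actually happens is this. Identify both groups with $A=\Hr^2(X(\Rb),\Zb(L))$ via the real cycle class maps. Then $\Hr^2(X,\Ibf^3(\Lc))\to\Hr^2(X,\Ibf^2(\Lc))$ becomes multiplication by $2$, because the inclusion $\Ibf^{j+1}\subseteq\Ibf^j$ corresponds to multiplication by $2$ on the topological side. For $X=\Sr^2_\Rb$ this map is $\Zb\xrightarrow{2}\Zb$, with cokernel $\Ch^2(\Sr^2_\Rb)=\Zb/2$.

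Consequently, the image of the $j=2$ connecting map from $\Hr^1(X,\Kbf_2^\Mr/2)$ is the whole $2$-torsion subgroup $A[2]=d_L(\Hr^1(X(\Rb),\Zb/2))$. It can therefore be neither ``killed'' nor ``absorbed''. Quotienting by it would remove too much whenever $\Hr_\alg^1(X(\Rb),\Zb/2)\neq\Hr^1(X(\Rb),\Zb/2)$.

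Your exceptional case already exhibits the contradiction. There $\Hr^2(X,\Ibf^3(\Lc))=0$, yet $\Hr^2(X,\Ibf^2(\Lc))\cong\Ch^2(X)\neq 0$: all closed points have degree $2$, so half the degree mod $2$ is a nonzero functional on $\Ch^2(X)$. Note also that your first case never uses the hypothesis that $X$ is not proper or $X(\Rb)\neq\emptyset$.

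\textbf{The missing ingredient is a comparison at level $j=\dim X=2$.} This lies outside Jacobson's stable range $j>\dim X$, which is the range you invoke. The statement needed is that $\gamma^2\colon\Hr^2(X,\Ibf^2(\Lc))\to\Hr^2(X(\Rb),\Zb(L))$ is an isomorphism under the hypothesis of the first case. It fails otherwise, as the exceptional case shows. This is the top-degree result \cite[Theorem 3.5]{lerbetImageHigherSignature2026} on which the paper's proof rests.

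With it, $\Ibf^3$ is not needed in the first case. Three ingredients suffice:
\begin{itemize}
\item the exact sequence $\Ch^1(X)\xrightarrow{\partial_\Lc}\Hr^2(X,\Ibf^2(\Lc))\to\Hr^2(X,\Ibf(\Lc))\to\Hr^2(X,\Kbf_1^\Mr/2)=0$;
\item the identification $\Hr^2(X,\Ibf(\Lc))\cong\Hr^2(X,\Wbf(\Lc))$, for which you also need $\Hr^1(X,\Zb/2)=0$ and not only the vanishing of $\Hr^2(X,\Zb/2)$;
\item the commutative square $\gamma^2\circ\partial_\Lc=d_L\circ\overline{\gamma}^1$.
\end{itemize}
Together these give the stated isomorphism, and this is exactly the paper's argument.

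Your treatment of the exceptional case is correct and essentially the paper's, with one precision needed. You must show $\Ibf^3(\Lc)=0$, which holds because the function field of $X$ is nonreal of $2$-cohomological dimension $2$. Saying the powers ``eventually vanish'' does not by itself stop the filtration at $\Ibf^2$. The paper instead gets $\Hr^2(X,\Ibf^3(\Lc))=0$ from Jacobson's theorem. Both routes then conclude with the Asok--Fasel and Totaro identification of the composite $\Ch^1(X)\to\Hr^2(X,\Ibf^2(\Lc))\to\Hr^2(X,\Ibf^2/\Ibf^3)=\Ch^2(X)$ with $\Sq_\Lc$.
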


\begin{rema}
In \cite{bargeFibresAlgebriquesSurface1987}, the above theorem is only stated for $X$ affine and $\Lc=\Osc_X$ (in particular, the final assertion does not appear). Using the results of \cite{lerbetImageHigherSignature2026}, it is not difficult to prove the full statement written above so we shall give the proof in this generality (the isomorphism constructed in the proof below in the affine case is essentially the same as Barge--Ojanguren's).
\end{rema}

\begin{proof}
Since $X$ is regular of dimension $\leqslant 3$, by the Gersten--Witt spectral sequence \cite{balmerGerstenWittSpectral2002}, there is an isomorphism $\Wr^2(X,\Lc)\cong\Hr^2(X,\Wbf(\Lc))$. The cohomology of the sheaves $\Wbf(\Lc)$ and $\Ibf(\Lc)$ are computed by Rost--Schmid complexes \cite[Chapter 5]{morelA1AlgebraicTopologyField2012} defined in terms of the contractions $\Ibf_{-1}$ and $\Wbf_{-1}$ of the sheaves $\Ibf$ and $\Wbf$ in the sense of, \emph{e.g.}, \cite[§2.3]{asokSplittingVectorBundles2014}. Since $\Ibf_{-1}\cong\Wbf_{-1}=\Wbf$, the inclusion $\Ibf\hookrightarrow\Wbf$ induces an isomorphism on Rost--Schmid complexes in degree $\geqslant 1$ thus an isomorphism $\Hr^1(X,\Ibf(\Lc))\cong\Hr^2(X,\Wbf(\Lc))$. Thus it suffices to prove that $\Hr^2(X,\Ibf(\Lc))$ admits a description as in the statement of Theorem \ref{theo:W2_surface}. For this, we use the quadratic real cycle class maps for the sheaves $\Ibf^t(\Lc)$ which induce a commutative square
\[\begin{tikzcd}
	{\Hr^1(X,\overline{\Ibf}^1)} & {\Hr^2(X,\Ibf^2(\Lc))} \\
	{\Hr^1(X(\Rb),\Zb/2)} & {\Hr^2(X(\Rb),\Zb(L))}
	\arrow["{\partial_\Lc}", from=1-1, to=1-2]
	\arrow["{\overline{\gamma}^1}"', from=1-1, to=2-1]
	\arrow["{\gamma^2}"', from=1-2, to=2-2]
	\arrow["{d_L}"', from=2-1, to=2-2]
\end{tikzcd}\]
where $\overline{\Ibf}^1=\Ibf(\Lc)/\Ibf^2(\Lc)$ (see for instance \cite[ladder before (2-4)]{lerbetImageHigherSignature2026}). The affirmation of the Milnor conjectures yields an isomorphism $\overline{\Ibf}^1\cong\Hsc^1$ of sheaves where $\Hsc^1$ is the sheaf on $X$ associated with the presheaf $U\mapsto\Hr_\et^1(U,\Zb/2)$. Then the exact sequence \[0\to\Ibf^2(\Lc)\to\Ibf(\Lc)\to\overline{\Ibf}\to 0\] of sheaves on $X$ induces an exact sequence \[\Hr^1(X,\Hsc^1)\xrightarrow{\partial_\Lc}\Hr^2(X,\Ibf^2(\Lc))\to\Hr^2(X,\Ibf(\Lc))\to\Hr^2(X,\Hsc^1).\] By the Bloch--Ogus theorem \cite{blochGerstensConjectureHomology1974}, one has $\Hr^1(X,\Hsc^1)=\Ch^1(X)$ and $\Hr^2(X,\Hsc^1)=0$. Therefore $\Hr^2(X,\Ibf(\Lc))$ is the cokernel of $\partial_\Lc$. Moreover, modulo the isomorphism $\Hr^1(X,\overline{\Ibf}^1)\cong\Ch^1(X)$, the map $\Hr^1(X,\overline{\Ibf}^1)\to\Hr^1(X(\Rb),\Zb/2)$ is $\overline{\gamma}^1$: in particular, it has image $\Hr_\alg^1(X(\Rb),\Zb/2)$. Thus the map $\gamma^2:\Hr^2(X,\Ibf^2(\Lc))\to\Hr^2(X(\Rb),\Zb(L))$ induces a homomorphism \[\chi:\Hr^2(X,\Ibf(\Lc))\to\Hr^2(X,\Ibf(\Lc))/d_L\Hr_\alg^1(X(\Rb),\Zb/2)\] on cokernels. But since $X$ is not proper or $X(\Rb)$ is not empty, according to \cite[Theorem 3.5]{lerbetImageHigherSignature2026}, the map $\gamma^2$ is an isomorphism. Thus so is $\chi$, so that there is an isomorphism \[\Wr^2(X,\Lc)\cong\Hr^2(X,\Wbf(\Lc))\cong\Hr^2(X,\Ibf(\Lc))\xrightarrow[\cong]{\chi}\Hr^2(X(\Rb),\Zb(L))/d_L\Hr_\alg^1(X(\Rb),\Zb/2),\] as required. 

Suppose now that $X$ is proper and that $X(\Rb)$ is empty. In this case, the exact top line in the previous diagram shows that $\Hr^2(X,\Ibf(\Lc))\cong\Wr^2(X,\Lc)$ is isomorphic to $\Coker\partial_\Lc$. Since $X$ has dimension $2$, by Jacobson's theorem \cite[Theorem 8.11]{jacobsonRealCohomologyPowers2017}, the map $\gamma_3^2:\Hr^2(X,\Ibf^3(\Lc))\to\Hr^2(X(\Rb),\Zb(L))=0$ is an isomorphism so $\Hr^2(X,\Ibf^3(\Lc))=0$. The sheaf $\Ibf^3(\Lc)$ is the kernel of the epimorphism $\Ibf^2(\Lc)\to\overline{\Ibf}^2$ so the group $\Hr^2(X,\Ibf^3(\Lc))=0$ surjects onto the kernel of the quotient map $\pi:\Hr^2(X,\Ibf^2(\Lc))\to\Hr^2(X,\overline{\Ibf}^2)$. It follows that $\pi$ is injective; since $\Coker\pi$ is a subgroup of $\Hr^3(X,\Ibf^3(\Lc))$, which vanishes because $X$ has dimension $2$, the map $\pi$ is also surjective, hence an isomorphism. Modulo the isomorphisms $\Hr^1(X,\overline{\Ibf}^1)\cong\Ch^1(X)$ and $\Hr^2(X,\overline{\Ibf}^2)\cong\Ch^2(X)$ provided by the affirmation of the Milnor conjecture, the composite \[\Hr^1(X,\overline{\Ibf}^1)\xrightarrow{\partial_\Lc}\Hr^2(X,\Ibf^2(\Lc))\xrightarrow{\pi}\Hr^2(X,\overline{\Ibf}^2)\] is precisely $\Sq_\Lc$ by \cite[Theorem 3.4.1]{asokSecondaryCharacteristicClasses2015} and \cite[Theorem 1.1]{totaroNONINJECTIVITYMAPWITT2003}. It follows that $\pi$ induces an isomorphism $\Wr^2(X,\Lc)=\Coker\partial_\Lc\cong\Coker\Sq_\Lc$. The proof is therefore complete.
\end{proof}

\begin{rema}
This argument can readily be generalized to give a description of $\Hr^d(X,\Wbf(\Lc))$ if $X$ is a smooth $\Rb$-variety of dimension $d$ and thus of the shifted Witt group $\Wr^d(X,\Lc)$ for small $d$. This is explained in greater detail in \cite[§3]{lerbetWittGroupsSmooth2026}.
\end{rema}

Barge--Ojanguren's result thus gives an essentially complete description of $\Vsc_2(X,\Lc)$ in terms of $\widetilde{\Kr}_0(X)$ and the cohomology of $X(\Rb)$ (given the knowledge of the algebraic part of its mod $2$ cohomology). A different description can be given using \cite[Proposition 2.2.5]{asokSplittingVectorBundles2025}. Recall the statement of this proposition in the case of smooth affine varieties:

\begin{prop}[\protect{\cite[Proposition 2.2.5]{asokSplittingVectorBundles2025}}]\label{prop:cartesian_square_chow--witt}
Let $X$ be a smooth real affine variety of dimension $d\geqslant 2$ and let $\Lc$ be a line bundle on $X$; set $L=\Lc(\Rb)$. Then the square
\[\begin{tikzcd}
	{\widetilde{\CH}^d(X,\Lc)} & {\CH^d(X)} \\
	{\Hr^d(X(\Rb),\Zb(L))} & {\Hr^d(X(\Rb),\Zb/2)}
	\arrow[from=1-1, to=1-2]
	\arrow["{\widetilde{\gamma}^d}"', from=1-1, to=2-1]
	\arrow["{\overline{\gamma}^d}", from=1-2, to=2-2]
	\arrow["{\mathrm{mod}\;2}"', from=2-1, to=2-2]
\end{tikzcd}\]
is cartesian.
\end{prop}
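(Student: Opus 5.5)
The plan is to compare the two exact sequences attached to the short exact sequence of sheaves $0\to\Ibf^{d+1}(\Lc)\to\Kbf_d^\MW(\Lc)\to\Kbf_d^\Mr\to 0$ and to its topological counterpart $0\to\Zb(L)\xrightarrow{2}\Zb(L)\to\Zb/2\to 0$. Taking cohomology in degree $d$ gives
\[\Hr^{d-1}(X,\Kbf_d^\Mr)\xrightarrow{\partial}\Hr^d(X,\Ibf^{d+1}(\Lc))\to\widetilde{\CH}^d(X,\Lc)\to\CH^d(X)\to\Hr^{d+1}(X,\Ibf^{d+1}(\Lc))=0.\]
The last term vanishes because the Nisnevich cohomological dimension of $X$ is at most $d$. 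The quadratic real cycle class maps are compatible with the maps $\overline{\gamma}^*$, so they map this sequence to
\[\Hr^{d-1}(X(\Rb),\Zb/2)\to\Hr^d(X(\Rb),\Zb(L))\xrightarrow{2}\Hr^d(X(\Rb),\Zb(L))\to\Hr^d(X(\Rb),\Zb/2)\to\Hr^{d+1}(X(\Rb),\Zb(L))=0.\]
The vertical map on $\Hr^d(X,\Ibf^{d+1}(\Lc))$ is $\gamma_{d+1}^d$. Under the identification $\Ibf^{d+1}\cong 2\Ibf^d$ in the real-closed setting, this becomes a map to the first copy of $\Hr^d(X(\Rb),\Zb(L))$, and the square involving multiplication by $2$ commutes.

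Cartesianness amounts to two statements. First, the map $\Ker(\widetilde{\CH}^d(X,\Lc)\to\CH^d(X))\to\Ker(\mathrm{mod}\;2)$ must be bijective. Second, every pair $(\alpha,\beta)\in\CH^d(X)\times\Hr^d(X(\Rb),\Zb(L))$ with $\overline{\gamma}^d\alpha=\beta\bmod 2$ must lift; this follows from the first statement together with the surjectivity of $\widetilde{\CH}^d(X,\Lc)\to\CH^d(X)$ by a diagram chase. For the first statement, the key input is Jacobson's theorem \cite[Theorem 8.11]{jacobsonRealCohomologyPowers2017}: for $n\geqslant d+1$, the real cycle class map $\Hr^d(X,\Ibf^n(\Lc))\to\Hr^d(X(\Rb),\Zb(L))$ is an isomorphism. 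Equivalently, one can use \cite[Theorem 3.5]{lerbetImageHigherSignature2026}, which is already used in the proof of Theorem \ref{theo:W2_surface}. Either result identifies $\Hr^d(X,\Ibf^{d+1}(\Lc))$ with $\Hr^d(X(\Rb),\Zb(L))$. It then remains to compare the images of the boundary maps. The algebraic boundary out of $\Hr^{d-1}(X,\Kbf_d^\Mr)$ factors through $\Hr^{d-1}(X,\Kbf_d^\Mr/2)$. Its image in $\Hr^d(X(\Rb),\Zb(L))$ should be contained in the image of the Bockstein of $\Hr^{d-1}(X(\Rb),\Zb/2)$, which is exactly the $2$-torsion/kernel $\Ker(2)$ in the topological sequence. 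Consequently $\Ker(\widetilde{\CH}^d\to\CH^d)$ maps onto $2\Hr^d(X(\Rb),\Zb(L))=\Ker(\mathrm{mod}\;2)$, and we need injectivity.

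The main obstacle is exactly this injectivity. One has to show that the kernel of $\Hr^d(X,\Ibf^{d+1}(\Lc))\to\widetilde{\CH}^d(X,\Lc)$ corresponds precisely to $\Ker(\cdot 2)$ on $\Hr^d(X(\Rb),\Zb(L))$. In other words, every $2$-torsion class there must come from an algebraic class in $\Hr^{d-1}(X,\Kbf_d^\Mr)$. I would try to avoid this by composing with the map $\Hr^d(X,\Ibf^{d+1}(\Lc))\to\Hr^d(X,\Ibf^d(\Lc))$ instead. Concretely, I would use the factorisation $\widetilde{\CH}^d(X,\Lc)\to\Hr^d(X,\Ibf^d(\Lc))$ together with the fibre product $\widetilde{\CH}^d\cong\CH^d\times_{\Ch^d}\Hr^d(X,\Ibf^d(\Lc))$, which holds because $\Hr^{d+1}$ vanishes; this is the standard fibre product description of Chow--Witt groups in top degree. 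It then suffices to show that $\Hr^d(X,\Ibf^d(\Lc))\to\Hr^d(X(\Rb),\Zb(L))$ is an isomorphism compatible with reduction mod $2$ versus $\Hr^d(X,\overline{\Ibf}^d)=\Ch^d(X)$. For affine $X$ of dimension $d$, I expect this to follow from \cite[Theorem 3.5]{lerbetImageHigherSignature2026} in degree $t=d$ (the affine hypothesis should be what makes $t=d$ enough). The cartesian square then follows formally by pasting two cartesian squares.
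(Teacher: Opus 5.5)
Your first reduction is correct. Since $\widetilde{\CH}^d(X,\Lc)\to\CH^d(X)$ is onto, the square is cartesian if and only if $\Ker(\widetilde{\CH}^d(X,\Lc)\to\CH^d(X))\to 2\Hr^d(X(\Rb),\Zb(L))$ is bijective. You also isolate the crux correctly: every $2$-torsion class of $\Hr^d(X,\Ibf^{d+1}(\Lc))\cong\Hr^d(X(\Rb),\Zb(L))$ must lie in the image of $\widetilde{\partial}_\Lc\colon\Hr^{d-1}(X,\Kbf_d^\Mr)\to\Hr^d(X,\Ibf^{d+1}(\Lc))$.

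The detour through $\Hr^d(X,\Ibf^d(\Lc))$ does not avoid this. It hides the crux inside the claim that $\widetilde{\CH}^d(X,\Lc)\cong\CH^d(X)\times_{\Ch^d(X)}\Hr^d(X,\Ibf^d(\Lc))$ ``because $\Hr^{d+1}$ vanishes''. Exactness of the sequence attached to $0\to\Kbf_d^\MW(\Lc)\to\Kbf_d^\Mr\oplus\Ibf^d(\Lc)\to\overline{\Ibf}^d\to 0$ only gives surjectivity onto the fibre product. Let $\partial_\Lc\colon\Hr^{d-1}(X,\overline{\Ibf}^d)\to\Hr^d(X,\Ibf^{d+1}(\Lc))$ be the other connecting map. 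Since $\Ibf^{d+1}(\Lc)\to\Kbf_d^\MW(\Lc)\to\Ibf^d(\Lc)$ is the inclusion, the kernel of the comparison map is $\Im\partial_\Lc/\Im\widetilde{\partial}_\Lc$.

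This kernel is isomorphic to the quotient of ${}_2\CH^d(X)$ by the Bockstein image of $\Hr^{d-1}(X,\Ibf^d(\Lc))$. Indeed, the Bockstein $\delta\colon\Hr^{d-1}(X,\Kbf_d^\Mr/2)\to{}_2\CH^d(X)$ is surjective with kernel the image of $\Hr^{d-1}(X,\Kbf_d^\Mr)$: lift a mod $2$ cycle $\sum(x,\bar f_x)$, write $\sum\mathrm{div}(f_x)=2w$, and take $[w]$. This is why such fibre-product descriptions are proved only under a no-$2$-torsion hypothesis on $\CH^*(X)$ (Hornbostel--Wendt). For real affine $X$, the torsion of $\CH^d(X)$ is $\Hr^d(X(\Rb),\Zb/2)$ \cite{colliot-theleneZerocyclesCohomologyReal1996}. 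It is nonzero exactly when $X(\Rb)$ has compact components, which is exactly when the proposition has content.

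Worse, the reduction is circular. By Jacobson's theorem and \cite[Theorem 3.5]{lerbetImageHigherSignature2026}, the inclusion $\Ibf^{d+1}(\Lc)\subseteq\Ibf^d(\Lc)$ becomes multiplication by $2$ on $\Hr^d(X(\Rb),\Zb(L))$. So $\Im\partial_\Lc$ is the full $2$-torsion subgroup, and injectivity of your fibre-product map says precisely that $\Im\widetilde{\partial}_\Lc$ is the full $2$-torsion subgroup. That is the obstacle you flagged.

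Put differently, both vertical maps of your lower square are isomorphisms, so your upper square is cartesian if and only if the square of the proposition is. For the map on $\Ch^d(X)$ you need $\overline{\gamma}^d\colon\Ch^d(X)\cong\Hr^d(X(\Rb),\Zb/2)$ from \cite[Theorem 3.2 (d)]{colliot-theleneZerocyclesCohomologyReal1996}. You should state this, since the pasting step requires it.

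The missing ingredient is geometric. Take a compact component $V$ of $X(\Rb)$ with $L_{|V}\not\simeq\omega_V$ and a real point $x\in V$. Consider the class of $x$ in $\Hr^d(X,\Ibf^{d+1}(\Lc))$ carrying a generator of $\Ibf(\kappa(x))\cong 2\Zb$; it maps to the generator of the $\Zb/2$-summand of $\Hr^d(X(\Rb),\Zb(L))$ at $V$. You must show this class lies in $\Im\widetilde{\partial}_\Lc$. Equivalently, you must show that $(x,\langle 1\rangle)=(x,\langle -1\rangle)$ in $\widetilde{\CH}^d(X,\Lc)$. Topologically this comes from transporting $x$ around a loop in $V$ along which $L\otimes\omega_V$ is non-orientable. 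Algebraically it has to be realised by curves with rational functions on $X$, and nothing in your proposal provides this. The paper does not prove the proposition either; it imports it from \cite{asokSplittingVectorBundles2025}.
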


Proposition \ref{prop:cartesian_square_chow--witt} also easily implies the following theorem.

\begin{theo}\label{theo:classification_rank_2_bundles_on_surfaces}
Let $E$ and $E'$ be rank $2$ bundles on a smooth real affine surface $X$. The following assertions are then equivalent.
\begin{itemize}
	\item[(i)] The bundles $E$ and $E'$ are isomorphic.
	\item[(ii)] There is an equality $c_*(E)=c_*(E')$ of Chern classes and the topological vector bundles $E(\Rb)$ and $E'(\Rb)$ are isomorphic.\footnote{Here we do not require that this isomorphism come from an algebraic morphism between $E$ and $E'$.}
	\item[(iii)] There is an equality $c_*(E')=c_*(E')$ of Chern classes and an equality $e(E(\Rb))=e(E'(\Rb))$ of topological Euler classes.
\end{itemize}
\vspace{-\topsep}\vspace{-\topsep}
\end{theo}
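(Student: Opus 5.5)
The plan is to prove (i)$\Rightarrow$(ii)$\Rightarrow$(iii)$\Rightarrow$(i); the first two implications are formal. If $E\cong E'$, the Chern classes agree and $E(\Rb)\cong E'(\Rb)$ topologically. If $E(\Rb)\cong E'(\Rb)$, the twisted Euler classes agree in $\Hr^2(X(\Rb),\Zb(L))$ once we identify the orientation sheaves. Here $L=\Lc(\Rb)$ is the determinant of $E(\Rb)$, and it is determined up to isomorphism by $w_1(E(\Rb))=\overline{\gamma}^1(c_1(E))$. In (iii) the Euler classes are read in the same twisted group, which makes sense because $c_1(E)=c_1(E')$.

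For (iii)$\Rightarrow$(i), first note that $c_1(E)=c_1(E')$ gives $\det E\cong\det E'=:\Lc$, since $c_1:\Pic(X)\to\CH^1(X)$ is an isomorphism. Hence $E$ and $E'$ both lie in $\Vsc_2(X,\Lc)$. By the discussion opening \S\ref{subsection:rank_2_bundles_on_surfaces}, Morel's Euler class $e:\Vsc_2(X,\Lc)\to\widetilde{\CH}^2(X,\Lc)$ is a bijection. It therefore suffices to show $e(E)=e(E')$ in $\widetilde{\CH}^2(X,\Lc)$. Proposition \ref{prop:cartesian_square_chow--witt} with $d=2$ says that an element of $\widetilde{\CH}^2(X,\Lc)$ is determined by two images: one in $\CH^2(X)$ under the forgetful map and one in $\Hr^2(X(\Rb),\Zb(L))$ under $\widetilde{\gamma}^2$. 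Injectivity of the comparison map into the fibre product is all we need here.

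The image of $e(E)$ in $\CH^2(X)$ is $c_2(E)$, by \cite[Remark 7.22]{morelA1AlgebraicTopologyField2012} as used in Lemma \ref{lem:surjectivity_rank_2_surfaces}. So $c_2(E)=c_2(E')$ takes care of the first component. For the second component we need $\widetilde{\gamma}^2(e(E))=e(E(\Rb))$, the topological Euler class of the real bundle twisted by its determinant. This compatibility of the Chow--Witt Euler class with the topological Euler class under the real cycle class map is the key input. I would cite it from \cite{lerbetImageHigherSignature2026} or \cite{asokSplittingVectorBundles2025}, where the real cycle class map is shown to send Chow--Witt Euler classes to Euler classes of real loci. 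Granting it, (iii) gives equal images in both corners, so $e(E)=e(E')$ and hence $E\cong E'$.

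I expect the main obstacle to be this compatibility, together with keeping the twists consistent. The identification of $\Zb(L)$ with the orientation local system of $E(\Rb)$ must be the one induced by $\det E\cong\Lc$. If the literature only gives compatibility up to sign, I would note that both sides of (iii) change sign together, or use the fact that a change of orientation isomorphism acts compatibly on both corners of the cartesian square.
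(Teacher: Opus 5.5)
Your proposal is correct and follows essentially the paper's argument. You use $c_1$ to place both bundles in $\Vsc_2(X,\Lc)$, invoke the bijectivity of the Euler class on surfaces, and conclude via the injection $\widetilde{\CH}^2(X,\Lc)\hookrightarrow\CH^2(X)\times\Hr^2(X(\Rb),\Zb(L))$ coming from Proposition~\ref{prop:cartesian_square_chow--witt}; the compatibility you flag as the key input, that this injection sends $e(E)$ to $(c_2(E),e(E(\Rb)))$, is exactly what the paper imports from the proof of \cite[Theorem 2.1.1]{asokSplittingVectorBundles2025}.
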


\begin{proof}
It is clear that (i) implies (ii) and (ii) implies (iii). Suppose that (iii) holds. In particular, one has $c_1(E)=c_1(E')$ so $E$ and $E'$ have isomorphic determinants. Let $\Lc=\det E$, so that $E$ and $E'$ determine elements $\{E\}$ and $\{E'\}$ of $\Vsc_2(X,\Lc)$. To conclude, it suffices to prove that $e(E)=e(E')$ in $\widetilde{\CH}^2(X,\Lc)$. According to Proposition \ref{prop:cartesian_square_chow--witt}, there is an injection $\widetilde{\CH}^2(X,\Lc)\to\CH^2(X)\times\Hr^2(X(\Rb),\Zb(L))$; as noted in the proof of \cite[Theorem 2.1.1]{asokSplittingVectorBundles2025}, this injection carries $e(E)$ to $(c_2(E),e(E(\Rb)))$. Thus by (iii), one has $e(E)=e(E')$ as required.
\end{proof}

\begin{exe}\label{exe:classification_no_compact_connected_component_surface}
Let $\Lc$ be a line bundle on a smooth real affine surface $X$ and set $L=\Lc(\Rb)$. Denote by $\Ceu(L)$ the set of compact connected components $V$ such that $L_{|V}$ is isomorphic to the orientation sheaf of $V$. If $\Ceu(L)$ is empty, for example, if $\Lc=\Osc_X$ and no compact connected component of $X$ is orientable, \emph{e.g.}, if $X(\Rb)$ has no compact connected component, then the map $\Hr^2(X(\Rb),\Zb(L))\to\Hr^2(X(\Rb),\Zb/2)$ is an isomorphism hence so is the map $\widetilde{\CH}^2(X,\Lc)\to\CH^2(X)$ by pullback. It follows that rank $2$ bundles $E$ and $E'$ on $X$ with determinant isomorphic to $\Lc$ are isomorphic if, and only if, there is an equality $c_2(E)=c_2(E')$ of Chern classes. In particular, if $\Hr^2(X(\Rb),\Zb/2)=0$, so that $X(\Rb)$ has no compact connected component, then the map $\varphi_2(X):\Vsc_2(X)\to\CH^1(X)\times\CH^2(X)$ is a bijection. 

Conversely, if $X(\Rb)$ has a compact connected component, then the group $\Hr^2(X(\Rb),\Zb(\omega_{X(\Rb)}))$ (where $\omega_{X(\Rb)}$ is the orientation sheaf of $X(\Rb)$) contains $\Zb$ as a direct summand so the kernel of the reduction mod $2$ map $\Hr^2(X(\Rb),\Zb(\omega_{X/\Rb}))\to\Hr^2(X(\Rb),\Zb/2)$ contains $2\Zb$. Thus denoting by $\omega_{X/\Rb}$ the canonical sheaf of the smooth variety $X$, the kernel of the map $\widetilde{\CH}^2(X,\omega_{X/\Rb})\to\CH^2(X)$ also contains $2\Zb$ and the rank $2$ bundles of determinant equal to $\omega_{X/\Rb}$ in $\Pic(X)/2$ are not classified by their second Chern class.
\end{exe}

\begin{exe}\label{exe:algebraic_vector_bundles_on_real_two_sphere}
Let $X$ be a smooth affine surface over $\Rb$ and suppose that $\CH^2(X_\Cb)=0$. Since $\CH^2(X_\Cb)$ is divisible by \cite[Lemma 1.2]{colliot-theleneZerocyclesCohomologyReal1996}, this assumption is satisfied, \emph{e.g.}, if $X_\Cb$ is rational as $\CH^2(X_\Cb)$ is then also a quotient of $\Zb$. Let $p:X_\Cb\to X$ denote the projection. According to \cite[Theorem 1.3]{colliot-theleneZerocyclesCohomologyReal1996}, there is an exact sequence \[\CH^2(X_\Cb)\xrightarrow{p_*}\CH^2(X)\to(\Zb/2)^t\to 0\] of abelian groups where $t$ is the number of compact connected components of $X(\Rb)$. Since $\CH^2(X_\Cb)=0$, the map $\CH^2(X)\to(\Zb/2)^t$ is an isomorphism. In particular, the group $\CH^2(X)$ is $2$-torsion and thus coincides with $\Ch^2(X)$. On other hand, the homomorphism $\overline{\gamma}^2:\Ch^2(X)\to\Hr^2(X(\Rb),\Zb/2)$ is an isomorphism by \cite[Theorem 3.2 (d)]{colliot-theleneZerocyclesCohomologyReal1996}. We conclude that $\overline{\gamma}^2:\CH^2(X)\to\Hr^2(X(\Rb),\Zb/2)$ is an isomorphism: by pullback and using Proposition \ref{prop:cartesian_square_chow--witt}, we see that the map $\widetilde{\gamma}^2:\widetilde{\CH}^2(X,\Lc)\to\Hr^2(X(\Rb),\Zb(\Lc(\Rb)))$ is an isomorphism for every line bundle $\Lc$ on $X$. Theorem \ref{theo:classification_rank_2_bundles_on_surfaces} can then be rephrased in the following way: if $E$ and $E'$ are rank $2$ bundles on $X$, then $E\simeq E'$ if, and only if, one has $c_1(E)=c_1(E')$ and the topological bundles $E(\Rb)$ and $E'(\Rb)$ over $X(\Rb)$ are isomorphic.

For example, assume that $X=\Sr_\Rb^2$. Then $\Sr_\Rb^2$ is rational (over $\Rb$) by stereographic projection. Moreover, the Picard group of $\Sr_\Rb^2$ is well-known to be trivial so equality of first Chern classes is automatic. We conclude if $E$ and $E'$ are rank $2$ bundles on $\Sr_\Rb^2$, then $E\simeq E'$ if, and only if, the topological bundles $E(\Rb)$ and $E'(\Rb)$ are isomorphic. This recovers \cite[Proposition 7.5]{bargeFibresAlgebriquesSurface1987} (though of course, the methods are quite similar).
\end{exe}

\subsection{On threefolds with small real locus}

\subsubsection*{Question (B)}

We review some of the results of \cite{asokOBSTRUCTIONSALGEBRAIZINGTOPOLOGICAL2019}. Consider the map \[\varphi_
2:\BGL_2\to\Kr(\Kbf_1^\Mr,1)\times\Kr(\Kbf_2^\Mr,2)=B\] of motivic spaces induced by Chern classes. If $U$ is a smooth affine scheme, then the composite \[\Vsc_2(U)\cong[U_+,\BGL_2]_{\Abb^1}\xrightarrow{(\varphi_2)_*}\Hr^1(U,\Kbf_1^\Mr)\times\Hr^2(U,\Kbf_2^\Mr)=\CH^1(U)\times\CH^2(U)\] is the map $\varphi_2(U)$, hence the notation. Let $F_2$ be the fibre of $\varphi_2$. Since $\pi_3(\Kr(\Kbf_1^\Mr,1)\times\Kr(\Kbf_2^\Mr,2))=1$ and $F_2$ is simply connected so that $\pi_1(F_2)=1$, the long exact sequence of homotopy sheaves associated to the previous fibration sequence reads \[1\to\pi_2(F_2)\to\pi_2(\BGL_2)=\Kbf_2^\MW\to\pi_2(\Kr(\Kbf_1^\Mr,1)\times\Kr(\Kbf_2^\Mr,2))=\Kbf_2^\Mr\to 1.\] The map $c_1:\pi_1(\BGL_2)\to\Kbf_1^\Mr=\Gm$ is the determinant map which is an isomorphism (\cite{asokCohomologicalClassificationVector2014}). Then \cite[Proposition 2.2.1]{asokOBSTRUCTIONSALGEBRAIZINGTOPOLOGICAL2019} implies that the inclusion $\pi_2(F)\to\pi_2(\BGL_2)$ identifies $\pi_2(F)$ with $\Ibf^3$ \emph{as a sheaf with additive action of the multiplicative group $\Gm$}.

We are now in a position to answer Question (B).

\begin{theo}\label{theo:question_B_kucharz}
Let $X$ be a smooth affine threefold over $\Rb$ and let $(c_1,c_2)\in\CH^1(X)\times\CH^2(X)$; set $w_i=\overline{\gamma}^i(c_i)\in\Hr^i(X(\Rb),\Zb/2)$. The following assertions are then equivalent.
\begin{enumerate}
	\item The pair $(c_1,c_2)$ lies in $\Im\varphi_2(X)$.
	\item The equality $\Sq_{w_1}(w_2)=0$ holds in $\Hr^3(X(\Rb),\Zb/2)$.
\end{enumerate}
\vspace{-\topsep}\vspace{-\topsep}
\end{theo}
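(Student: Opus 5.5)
Necessity is already explained in the introduction: if $E$ is a rank $2$ bundle with $c_i(E)=c_i$, then $E\oplus\Osc_X$ has Chern classes $(c_1,c_2,0)$, and Kucharz's theorem gives $w_1\cup w_2+\Sq(w_2)=0$, that is, $\Sq_{w_1}(w_2)=0$. To keep the argument self-contained within the obstruction-theoretic framework, I would rather deduce necessity from the same computation as sufficiency. So the plan is to analyse the single obstruction to lifting a map $X_+\to B=\Kr(\Kbf_1^\Mr,1)\times\Kr(\Kbf_2^\Mr,2)$ along $\varphi_2:\BGL_2\to B$, using the fibre $F_2$.

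\textbf{Step 1: the Moore--Postnikov tower of $\varphi_2$.} We know that $\pi_1(F_2)=1$ and $\pi_2(F_2)\cong\Ibf^3$, with $\Gm$ acting through the determinant. Since $X$ has Nisnevich cohomological dimension $3$, the obstructions to lifting $(c_1,c_2)$ through the tower live in $\Hr^{i+1}(X,\pi_i(F_2)(\Lc))$ with $\Lc$ the line bundle of class $c_1$. The first obstruction lies in $\Hr^3(X,\Ibf^3(\Lc))$. The next one would lie in $\Hr^4(X,\pi_3(F_2)(\Lc))=0$, so the next stage of the tower lifts automatically. The affine representability recalled before \eqref{eq:classification_rank_2_bundles} then turns the lift into a vector bundle. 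Consequently, $(c_1,c_2)\in\Im\varphi_2(X)$ if and only if this primary obstruction $o(c_1,c_2)\in\Hr^3(X,\Ibf^3(\Lc))$ vanishes.

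\textbf{Step 2: identify the obstruction.} The $k$-invariant $B\to\Kr(\Ibf^3,3)$, twisted by the universal line bundle, is the one computed in \cite{asokOBSTRUCTIONSALGEBRAIZINGTOPOLOGICAL2019}. It should be the composite of reduction modulo $2$, $\Kbf_2^\Mr\to\Kbf_2^\Mr/2\cong\overline{\Ibf}^2$, with the twisted Bockstein $\overline{\Ibf}^2\to\Ibf^3(\Lc)[1]$ coming from $0\to\Ibf^3(\Lc)\to\Ibf^2(\Lc)\to\overline{\Ibf}^2\to0$. Granting this, $o(c_1,c_2)=\beta_\Lc(\overline{c_2})$, where $\beta_\Lc:\Ch^2(X)\to\Hr^3(X,\Ibf^3(\Lc))$. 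The image of $\beta_\Lc(\overline{c_2})$ in $\Hr^3(X,\overline{\Ibf}^3)=\Ch^3(X)$ is $\Sq_\Lc(\overline{c_2})$, by the same results (\cite{asokSecondaryCharacteristicClasses2015}, \cite{totaroNONINJECTIVITYMAPWITT2003}) used in the proof of Theorem~\ref{theo:W2_surface}.

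\textbf{Step 3: pass to the real locus.} By \cite{lerbetImageHigherSignature2026} (and Jacobson's theorem), on a smooth real threefold $\gamma_3^3:\Hr^3(X,\Ibf^3(\Lc))\to\Hr^3(X(\Rb),\Zb(L))$ is an isomorphism, since $\Ibf^3$ is in the stable range $3\geqslant\dim X$. The real cycle class maps are compatible with Bocksteins (the ladder used above), so $\gamma^3(o)=d_L(w_2)$, with $d_L$ the topological twisted Bockstein. Hence $o=0$ if and only if $d_L(w_2)=0$.

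It then remains to check that $d_L(w_2)=0$ is equivalent to $\Sq_{w_1}(w_2)=0$ in degree $3=\dim X(\Rb)$. First, $\Sq_L$ is $d_L$ followed by reduction modulo $2$. Second, in top degree on a manifold, $\Hr^3(X(\Rb),\Zb(L))\to\Hr^3(X(\Rb),\Zb/2)$ is injective on the image of $d_L$. To see this, compute component by component. On a compact component $V$ with $L_{|V}\cong\omega_V$, the group $\Hr^3(V,\Zb(L))$ is $\Zb$, and $d_L$ lands in its $2$-torsion, which is $0$. Otherwise $\Hr^3(V,\Zb(L))$ is $\Zb/2$, or $0$ for noncompact $V$, and reduction modulo $2$ is an isomorphism.

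\textbf{Main obstacle.} I expect the hard step to be the precise identification of the twisted $k$-invariant in Step 2, including its $\Gm$-equivariance. The one-line computation of the real cycle class map in Step 3 depends on it. If that identification fails, I would instead compare the composite in $\Ch^3(X)$ and use Proposition~\ref{prop:cartesian_square_chow--witt}-type injectivity together with \cite[Theorem 3.2]{colliot-theleneZerocyclesCohomologyReal1996}.
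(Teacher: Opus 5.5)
Your proposal is correct and is essentially the paper's proof. You take the single obstruction $\widetilde{\partial}_\Lc(c_2)\in\Hr^3(X,\Ibf^3(\Lc))$ from the Moore--Postnikov tower of $\varphi_2$, identified via \cite{asokOBSTRUCTIONSALGEBRAIZINGTOPOLOGICAL2019}. The paper rewrites it as $\partial_\Lc(\overline{c_2})$ using Morel's fibre-product description of $\Kbf_2^\MW$, which is exactly your ``reduction mod $2$ followed by Bockstein''. You then transport it by the isomorphism $\gamma^3$ of \cite[Theorem 3.5]{lerbetImageHigherSignature2026} to $d_L(w_2)$ and compare with $\Sq_L(w_2)=\rho(d_L(w_2))$ component by component.

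Two small corrections. First, $\gamma^3$ is an isomorphism here because $X$ is affine (hence not proper), not because $\Ibf^3$ is in Jacobson's stable range. That range requires exponent $\geqslant\dim X+1$, and for proper threefolds without real points $\Hr^3(X,\Ibf^3)$ surjects onto $\Ch^3(X)$, which can be nonzero. Second, replacing $\Sq_L$ by $\Sq_{w_1}$ uses $w_1(\Lc(\Rb))=\overline{\gamma}^1(c_1)$, which the paper takes from \cite[Th\'eor\`eme 4]{kahnConstructionClassesChern1987}.
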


\begin{proof}
The pair $(c_1,c_2)$ induces a pointed morphism $f:X_+\to B$. Then $(c_1,c_2)$ lies in $\Im\varphi_2(X)$ if, and only if, there exists a lift of $f$ through $\varphi_2$. The first nontrivial stage of the Moore--Postnikov tower of the map $\varphi_2$ is the second stage $\Ec_2$ because the homotopy fibre $F_2$ is simply connected. Since $X$ has dimension $3$, according to \cite[Corollary B.5]{morelA1AlgebraicTopologyField2012}, the underlying map $i_2:\BGL_2\to\Ec_2$ induces a surjection $(i_2)_*:[X_+,\BGL_2]_{\Abb^1}\to[X_+,\Ec_2]_{\Abb^1}$. Therefore $(c_1,c_2)$ lies in $\Im\varphi_2(X)$ if, and only if, there exists a lift of $f$ through the map $q:\Ec_2\to B$. This map sits in a pullback square
\[\begin{tikzcd}
	{\Ec_2} & {B\Gm} \\
	{B} & {\Kr^{\Gm}(\pi_2(F),3)}
	\arrow[from=1-1, to=1-2]
	\arrow["{q}"', from=1-1, to=2-1]
	\arrow[from=1-2, to=2-2]
	\arrow["{k_2}"', from=2-1, to=2-2]
\end{tikzcd}\] hence the obstruction to lifting $f$ along $q$ is the cohomology class in $\Hr^3(X,\pi_2(F_3)(\Lc))=\Hr^3(X,\Ibf^3(\Lc))$ corresponding to $k_2\circ f$, where $\Lc$ is the line bundle on $X$ corresponding to $c_1\in\CH^1(X)\cong\Pic(X)$. The cohomology long exact sequence associated with the inclusion $\Ibf^3(\Lc)\subseteq\Kbf_2^\MW(\Lc)$ gives a connecting homomorphism $\widetilde{\partial}_\Lc:\Hr^2(X,\Kbf_2^\Mr)\to\Hr^3(X,\Ibf^3(\Lc))$ which is in fact induced by the $k$-invariant $k_2$ as explained in the proof of \cite[Theorem 2.2.2]{asokOBSTRUCTIONSALGEBRAIZINGTOPOLOGICAL2019}. This identifies the obstruction class $o(f)\in\Hr^2(X,\pi_2(F_2)(\Lc))$, determined by the composite $k_2\circ f$, with $\widetilde{\partial}_\Lc(c_2)$. We conclude that $(c_1,c_2)$ lies in $\Im\varphi_2(X)$ if, and only if, the class $\widetilde{\partial}_\Lc(c_2)\in\Hr^3(X,\Ibf^3(\Lc))$ vanishes.

Now we consider the commutative ladder
\[\begin{tikzcd}
	0 & {\Ibf^3(\Lc)} & {\Kbf_2^\MW(\Lc)} & {\Kbf_2^\Mr} & 0 \\
	0 & {\Ibf^3(\Lc)} & {\Ibf^2(\Lc)} & {\overline{\Ibf}^2} & 0
	\arrow[from=1-1, to=1-2]	
	\arrow[from=1-2, to=1-3]
	\arrow["{=}"', from=1-2, to=2-2]
	\arrow[from=1-3, to=1-4]
	\arrow[from=1-3, to=2-3]
	\arrow["\lrcorner"{anchor=center, pos=0.125}, draw=none, from=1-3, to=2-4]
	\arrow[from=1-4, to=2-4]
	\arrow[from=1-4, to=1-5]
	\arrow[from=2-1, to=2-2]
	\arrow[from=2-2, to=2-3]
	\arrow[from=2-3, to=2-4]
	\arrow[from=2-4, to=2-5]
\end{tikzcd}\]
deduced from the description of Milnor--Witt $\Kr$-theory as a fibre product obtained in \cite[Théorème 5.3]{morelPuissancesLidealFondamental2004}. This commutative ladder induces a commutative square
\[\begin{tikzcd}
	{\Hr^2(X,\Kbf_2^\Mr)} & {\Hr^3(X,\Ibf^3(\Lc))} \\
	{\Hr^2(X,\overline{\Ibf}^2)} & {\Hr^3(X,\Ibf^3(\Lc))}
	\arrow["{\widetilde{\partial}_\Lc}", from=1-1, to=1-2]
	\arrow[from=1-1, to=2-1]
	\arrow["{=}", from=1-2, to=2-2]
	\arrow["{\partial_\Lc}"', from=2-1, to=2-2]
\end{tikzcd}\]
The affirmation of the Milnor conjecture yields an isomorphism $\Hr^2(X,\overline{\Ibf}^2)\cong\Ch^2(X)$ modulo which the homomorphism \[\CH^2(X)=\Hr^2(X,\Kbf_2^\Mr)\to\Hr^2(X,\overline{\Ibf}^2)=\Ch^2(X)\] is reduction mod $2$; we let $\overline{c_2}$ be the image of $c_2$ in $\Hr^2(X,\overline{\Ibf}^2)$. Then $\widetilde{\partial}_\Lc(c_2)=0$ if, and only if, the cohomology class $\partial_\Lc(\overline{c_2})$ vanishes.

To understand this vanishing, we use the commutative ladder
\[\begin{tikzcd}
	{\Hr^2(X,\overline{\Ibf}^2)} & {\Hr^3(X,\Ibf^3(\Lc))} & {\Hr^3(X,\overline{\Ibf}^3)} \\
	{\Hr^2(X(\Rb),\Zb/2)} & {\Hr^3(X(\Rb),\Zb(L))} & {\Hr^3(X(\Rb),\Zb/2)}
	\arrow["{\partial_\Lc}", from=1-1, to=1-2]
	\arrow["{\overline{\gamma}^2}"', from=1-1, to=2-1]
	\arrow[from=1-2, to=1-3]
	\arrow["\gamma^3"', from=1-2, to=2-2]
	\arrow["{\overline{\gamma}^3}"', from=1-3, to=2-3]
	\arrow["{d_L}"', from=2-1, to=2-2]
	\arrow["\rho"', from=2-2, to=2-3]
\end{tikzcd}\]
studied in \cite[proof of Lemma 4.1.1]{asokSplittingVectorBundles2025} where $L=\Lc(\Rb)$ is the topological line bundle associated with $L$, the map $d_L$ is the connecting homomorphism for the cohomology long exact sequence associated with the epimorphism $\Zb(L)\to\Zb/2$ of sheaves and $\rho$ is reduction mod $2$ of the coefficients. The middle vertical map $\gamma^3$ is an isomorphism by \cite[Theorem 3.5]{lerbetImageHigherSignature2026} so by commutation of the left square in the above ladder, the equality $\partial_\Lc(\overline{c_2})=0$ holds if, and only if, the image of $\overline{\gamma}^2(\overline{c_2})=w_2$ under $d_L$ vanishes. Now by Poincaré duality, the group $\Hr^3(X(\Rb),\Zb(L))$ is isomorphic to \[\bigoplus_{\Ceu(L)}\Zb\oplus\bigoplus_{\Ceu\setminus\Ceu(L)}\Zb/2\] where $\Ceu$ is the collection of compact connected components of $X(\Rb)$ and $\Ceu(L)\subseteq\Ceu$ is the subset of those components $V$ such that $L_{|V}$ is isomorphic to the orientation sheaf of $V$ (see for example \cite[Corollary 1.2.2]{faselVasersteinSymbolReal2018}). Since $\Hr^2(X(\Rb),\Zb/2)$ is a $2$-torsion group, the image of $d_L$ is contained in $\bigoplus_{\Ceu\setminus\Ceu(L)}\Zb/2$, on which $\rho$ is injective. Therefore $d_L(w_2)=0$ if, and only if, the image of $w_2$ under the composite $\rho\circ d_L$ is trivial. As noted in \cite[proof of Lemma 4.1.1]{asokSplittingVectorBundles2025}, by \cite[Theorem 2.3]{greenblattHomologyLocalCoefficients2006}, this map is the twisted Steenrod square $\Sq_L:x\mapsto\Sq(x)+w_1(L)\cup x$ where $w_1(L)$ is the Stiefel--Whitney class of $L$. By \cite[Théorème 4]{kahnConstructionClassesChern1987}, one has $w_1(L)=w_1$. We conclude $(c_1,c_2)$ lies in $\Im\varphi_2(X)$ if, and only if, the class $\Sq_{w_1}(w_2)\in\Hr^3(X(\Rb),\Zb/2)$ vanishes as required.
\end{proof}

\subsubsection*{Classification results}

In this subsection, we study analogues of \cite[Theorem 6.6]{asokCohomologicalClassificationVector2014} for smooth real affine varieties. To this end, we consider again the Postnikov tower $(\BGL_2^{(n)})_{n\geqslant 1}$ of $\BGL_2$. In contrast to the case of rank $2$ bundles on surfaces considered in Subsection \ref{subsection:rank_2_bundles_on_surfaces}, if $X$ is a smooth affine threefold over a field, the map $[X_+,\BGL_2]_{\Abb^1}\to[X_+,\BGL_2^{(2)}]_{\Abb^1}$ is surjective, but is generally not injective so rank $2$ bundles need not be classified by their determinant and Euler class, \emph{a fortiori} by their Chern classes. Our aim in this subsection is to bound the injectivity defect by hypotheses on the real locus.

Let $X$ be a smooth affine threefold over $\Rb$. Then the map $[X_+,\BGL_2]_{\Abb^1}\to[X_+,\BGL_2^{(3)}]_{\Abb^1}$ is indeed a bijection by \cite[Proposition 6.2]{asokCohomologicalClassificationVector2014}. Moreover, if $f:X_+\to\BGL_2^{(2)}$ is a morphism of motivic spaces inducing a torsor $\xi:X_+\to\Br\pi_1(\BGL_2)=\BGm$, hence a line bundle $\Lc$ on $X$, by composition with the $k_1$-invariant, then the set of lifts of $f$ along the map $\BGL_2^{(3)}\to\BGL_2^{(2)}$ up to $\Abb^1$-homotopy is in bijection with a quotient of $\Hr^3(X,\pi_2(\BGL_3)(\Lc))$. We then have:

\begin{prop}
Let $X$ be a smooth real affine threefold and suppose that $\Hr^3(X(\Rb),\Zb/2)=0$. Let $\Lc$ be a line bundle on $X$. Then the group $\Hr^3(X,\pi_3(\BGL_2)(\Lc))$ vanishes.
\end{prop}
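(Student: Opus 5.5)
The plan is to use the known structure of the homotopy sheaf $\pi_3(\BGL_2)\cong\pi_2(\Abb^2\setminus 0)$ computed by Asok--Fasel, together with the fact that $\Hr^3(X,-)$ is right exact on Nisnevich sheaves on a threefold: the Nisnevich cohomological dimension of $X$ is $3$, so an epimorphism of sheaves induces a surjection on $\Hr^3$. Recall from \cite{asokCohomologicalClassificationVector2014} that there is an exact sequence of sheaves (with $\Gm$-action)
\[0\to\Tbf_4'\to\pi_3(\BGL_2)\to\Kbf_3^{\mathrm{Sp}}\to 0.\]
Here $\Tbf_4'$ is a quotient of $\Ibf^4$. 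In top degree, Asok--Fasel also identify $\Hr^3(X,\Kbf_3^{\mathrm{Sp}}(\Lc))$ with a quotient of a cohomology group built from Milnor $\Kr$-theory with finite coefficients, essentially $\Hr^3(X,\Kbf_4^\Mr/2)$ up to a part that is divisible and killed by the relevant torsion. Twisting by $\Lc$ preserves these sequences. Taking $\Hr^3$ gives an exact sequence
\[\Hr^3(X,\Tbf_4'(\Lc))\to\Hr^3(X,\pi_3(\BGL_2)(\Lc))\to\Hr^3(X,\Kbf_3^{\mathrm{Sp}}(\Lc))\to 0,\]
so it suffices to show that both outer groups vanish.

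For the first group, right exactness gives a surjection $\Hr^3(X,\Ibf^4(\Lc))\to\Hr^3(X,\Tbf_4'(\Lc))$. Since $4>\dim X$, Jacobson's theorem \cite[Theorem 8.11]{jacobsonRealCohomologyPowers2017} (or \cite[Theorem 3.5]{lerbetImageHigherSignature2026}) shows that the real cycle class map $\gamma^3_4:\Hr^3(X,\Ibf^4(\Lc))\to\Hr^3(X(\Rb),\Zb(L))$ is an isomorphism. By Poincaré duality, as in the proof of Theorem \ref{theo:question_B_kucharz}, the group $\Hr^3(X(\Rb),\Zb(L))$ is a sum of copies of $\Zb$ and $\Zb/2$ indexed by the compact connected components of $X(\Rb)$. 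The hypothesis $\Hr^3(X(\Rb),\Zb/2)=0$ says exactly that there are no such components, so this group vanishes, and hence so does $\Hr^3(X,\Tbf_4'(\Lc))$.

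For the second group, I would follow the analysis of $\Hr^3(X,\Kbf_3^{\mathrm{Sp}})$ in \cite[§4--5]{asokCohomologicalClassificationVector2014}. The plan is to present $\Kbf_3^{\mathrm{Sp}}$ in top degree through sheaves whose $\Hr^3$ is controlled by either a uniquely divisible or torsion Milnor part or by mod $2$ unramified cohomology $\Hsc^4=\Kbf_4^\Mr/2$. Over an algebraically closed field the torsion contributions vanish by Roitman/Suslin-type arguments together with divisibility of $\CH^3$. Over $\Rb$, the relevant obstruction is instead $\Hr^3(X,\Hsc^4)$. By Bloch--Ogus and the comparison theorem of Colliot-Thélène--Scheiderer, $\Hr^3(X,\Hsc^n)\cong\Hr^3(X(\Rb),\Zb/2)$ for $n>\dim X$, and this vanishes by hypothesis. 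The odd torsion parts (for instance the mod $3$ parts) should vanish because $\CH^3(X_\Cb)$ is divisible \cite[Lemma 1.2]{colliot-theleneZerocyclesCohomologyReal1996} and the $2$-primary part of $\CH^3(X)$ is detected on $X(\Rb)$.

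The main obstacle is this second step. One has to pin down precisely which presentation of $\Hr^3(X,\Kbf_3^{\mathrm{Sp}}(\Lc))$ from Asok--Fasel survives over a non-closed field, and check that every piece is either divisible and killed, or is mod $2$ cohomology in degree above the dimension, hence purely real and zero under the hypothesis. I would first try to get a surjection $\Hr^3(X,\Kbf_4^\Mr/2)\oplus(\text{divisible part})\to\Hr^3(X,\Kbf_3^{\mathrm{Sp}}(\Lc))$ and then apply the real comparison theorem to the $\Kbf_4^\Mr/2$ term.
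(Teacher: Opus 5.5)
Your skeleton is the paper's: the extension $0\to\Tbf'_4(\Lc)\to\pi_3(\BGL_2)(\Lc)\to\GWb_3^2(\Lc)\to 0$ (your $\Kbf_3^{\mathrm{Sp}}$ is $\GWb_3^2$), right exactness of $\Hr^3$ on a threefold, and killing both outer terms through the real locus. However, neither outer term is correctly handled.

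\textbf{The first term.} $\Tbf'_4$ is \emph{not} a quotient of $\Ibf^4$. It is the image of $\Kbf^\MW_4$, hence a quotient of $\Kbf^\MW_4$. What \cite[Theorem 3.3]{asokCohomologicalClassificationVector2014} gives is an exact sequence $\Ibf^5(\Lc)\to\Tbf'_4(\Lc)\to\Sbf'_4\to 0$, where $\Sbf'_4$ is a quotient of $\Kbf^\Mr_4/12$ with trivial $\Gm$-action. This Milnor piece is genuinely present: the epimorphism from $\Kbf^\Mr_4/12$ becomes an isomorphism after contraction, so $\Sbf'_4$ is not $2$-torsion and cannot be reached from $\Ibf^4$. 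Your Jacobson argument says nothing about it, so the claimed surjection $\Hr^3(X,\Ibf^4(\Lc))\to\Hr^3(X,\Tbf'_4(\Lc))$ is unjustified.

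You need a separate input. By \cite[Corollary 4.3.6]{asokSplittingVectorBundles2025}, $\Hr^3(X,\Kbf^\Mr_4/12)\cong\Hr^3(X(\Rb),\Zb/2)=0$, so $\Hr^3(X,\Sbf'_4)=0$ by right exactness. Apply your Jacobson-type argument to $\Ibf^5(\Lc)$ instead of $\Ibf^4(\Lc)$: this gives $\Hr^3(X,\Ibf^5(\Lc))\cong\Hr^3(X(\Rb),\Zb(L))=0$. Right exactness for the image of $\Ibf^5(\Lc)$ in $\Tbf'_4(\Lc)$ then shows that image has vanishing $\Hr^3$. Combining the two gives $\Hr^3(X,\Tbf'_4(\Lc))=0$.

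\textbf{The second term.} What you wrote is a plan rather than an argument. The presentation you guess, a surjection onto $\Hr^3(X,\GWb^2_3(\Lc))$ from $\Hr^3(X,\Kbf^\Mr_4/2)$ plus a divisible part, is not something you have at your disposal. The missing input is \cite[Theorem 4.17]{asokCohomologicalClassificationVector2014}: the group $\Hr^3(X,\GWb_3^2(\Lc))$ is a quotient of $\Ch^3(X)=\Hr^3(X,\Kbf_3^\Mr/2)$. Since $X$ is affine, \cite[Theorem 3.2 (d)]{colliot-theleneZerocyclesCohomologyReal1996} identifies $\Ch^3(X)$ with $\Hr^3(X(\Rb),\Zb/2)$, which vanishes by hypothesis, so the group dies immediately. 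No analysis of divisible or odd-torsion parts of $\CH^3(X)$ or $\CH^3(X_\Cb)$ is needed, because the group is already a quotient of a $2$-torsion group.
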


\begin{proof}
As explained in \cite[proof of Theorem 6.6]{asokCohomologicalClassificationVector2014}, the sheaf $\pi_3(\BGL_2)(\Lc)$ is described in \cite[Theorem 3.3]{asokCohomologicalClassificationVector2014} by an exact sequence 
\begin{equation}\label{eq:desc_homotopy_sheaf_BGL}
0\to\Tbf_4'(\Lc)\to\pi_3(\BGL_2)(\Lc)\to\GWb_3^2(\Lc)\to 0
\end{equation}
where $\GWb_3^2(\Lc)$ is a twisted unramified higher Grothendieck--Witt sheaf and $\Tbf_4'(\Lc)$ is an extension of the form
\begin{equation}\label{eq:desc_tb4}
\Ibf^5(\Lc)\to\Tbf_4'(\Lc)\to\Sbf'_4(\Lc)\to 0
\end{equation}
where $\Sbf'_4$ is a quotient of $\Kbf_4^\Mr/12$ (in particular, the action on $\Sbf'_4$ is trivial so $\Sbf'_4(\Lc)\simeq\Sbf_4'$). By the results of \cite{jacobsonRealCohomologyPowers2017} (see \cite[Theorem 3.11]{hornbostelRealCycleClass2021}), there is an isomorphism $\Hr^3(X,\Ibf^5(\Lc))\cong\Hr^3(X(\Rb),\Zb(\Lc(\Rb)))$. As noted in the proof of Theorem \ref{theo:question_B_kucharz}, this group is a direct sum indexed by the compact connected components of $X(\Rb)$. Since $\Hr^3(X(\Rb),\Zb/2)=0$, there is no such component so $\Hr^3(X(\Rb),\Zb(\Lc(\Rb)))=\Hr^3(X,\Ibf^5(\Lc))$ vanishes. Moreover, by \cite[Corollary 4.3.6]{asokSplittingVectorBundles2025}, one has $\Hr^3(X,\Kbf_3^\Mr/12)\simeq\Hr^3(X(\Rb),\Zb/2)$ so the group $\Hr^3(X,\Kbf_3^\Mr/12)$ vanishes. Since $X$ is a threefold, it has cohomological dimension $\leqslant 3$ so the epimorphism $\Kbf_4^\Mr/12\to\Sbf'_4$ of sheaves induces an epimorphism $0=\Hr^3(X,\Kbf_4^\Mr/12)\to\Hr^3(X,\Sbf'_4)$ of top cohomology groups (see also \cite[Lemma 4.3.1]{asokSplittingVectorBundles2025}). For the same reason, if $\Jbf(\Lc)\subseteq\Tbf'_4(\Lc)$ is the image of the morphism $\Ibf^5(\Lc)\to\Tbf_4'(\Lc)$, one has $\Hr^3(X,\Jbf(\Lc))=0$. The cohomology long exact sequence associated with (\ref{eq:desc_tb4}) then yields an exact sequence \[\Hr^3(X,\Jbf(\Lc))=0\to\Hr^3(X,\Tbf'_4(\Lc))\to\Hr^3(X,\Sbf'_4(\Lc))=0\] and thus $\Hr^3(X,\Tbf'_4(\Lc))=0$. On the other hand, by \cite[Theorem 4.17]{asokCohomologicalClassificationVector2014}, the group $\Hr^3(X,\GWb_3^2(\Lc))$ is a quotient of $\Ch^3(X)$. By \cite[Theorem 3.2 (d)]{colliot-theleneZerocyclesCohomologyReal1996}, the group $\Ch^3(X)$ is isomorphic to $\Hr^3(X(\Rb),\Zb/2)$ so $\Ch^3(X)=0$ by assumption and thus its quotient $\Hr^3(X,\GWb_3^2(\Lc))$ is trivial. Finally, the cohomology long exact sequence associated to (\ref{eq:desc_homotopy_sheaf_BGL}) gives a short exact sequence \[0=\Hr^3(X,\Tbf'_4(\Lc))\to\Hr^3(X,\pi_3(\BGL_2)(\Lc))\to\Hr^3(X,\GWb_3^2(\Lc))=0\] so $\Hr^3(X,\pi_3(\BGL_2)(\Lc))=0$ as required.
\end{proof}

\begin{cor}\label{cor:euler_bijective_no_compact}
Let $X$ be a smooth real affine threefold. Suppose that $\Hr^3(X(\Rb),\Zb/2)=0$ and let $\Lc$ be a line bundle on $X$. Then the Euler class induces a bijection $e:\Vsc_2(X,\Lc)\xrightarrow{\simeq}\widetilde{\CH}^2(X,\Lc)$.
\end{cor}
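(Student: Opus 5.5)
We factor the Euler class through the Postnikov tower of $\BGL_2$, following the discussion before (\ref{eq:classification_rank_2_bundles}) and at the start of this subsection. By \cite[Proposition 6.2]{asokCohomologicalClassificationVector2014}, the map $\Vsc_2(X)\cong[X_+,\BGL_2]_{\Abb^1}\to[X_+,\BGL_2^{(3)}]_{\Abb^1}$ is a bijection, since $X$ is a smooth affine threefold. The projection $\BGL_2^{(3)}\to\BGL_2^{(2)}$ is compatible with the determinant maps to $\Pic(X)$, because every map in sight factors through the $k_1$-invariant. Moreover, by \cite[Proposition 6.3]{asokCohomologicalClassificationVector2014}, the fibre of $\det:[X_+,\BGL_2^{(2)}]_{\Abb^1}\to\Pic(X)$ over $\{\Lc\}$ is identified, via $(k_2)_*$, with $\widetilde{\CH}^2(X,\Lc)$, and the resulting composite on $\Vsc_2(X,\Lc)$ is Morel's Euler class $e$. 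It therefore suffices to prove that the map $[X_+,\BGL_2^{(3)}]_{\Abb^1}\to[X_+,\BGL_2^{(2)}]_{\Abb^1}$ is bijective on the fibres over $\{\Lc\}$.

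\emph{Surjectivity.} Let $f:X_+\to\BGL_2^{(2)}$ have determinant $\Lc$. The obstruction to lifting $f$ to $\BGL_2^{(3)}$ is a class in $\Hr^4(X,\pi_3(\BGL_2)(\Lc))$. This group vanishes because the Nisnevich cohomological dimension of $X$ is at most $3$. (Alternatively, surjectivity follows from \cite[Corollary B.5]{morelA1AlgebraicTopologyField2012}, as in the proof of Theorem~\ref{theo:question_B_kucharz}.)

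\emph{Injectivity.} By the standard description of lifts in a twisted principal fibration recalled before the Proposition, the set of $\Abb^1$-homotopy classes of lifts of a fixed $f$ is a quotient of $\Hr^3(X,\pi_3(\BGL_2)(\Lc))$. Concretely, this group acts transitively on the set of lifts, with the twist determined by the composite of $f$ with the $k_1$-invariant, which is $\Lc$. The Proposition shows that this group vanishes under the hypothesis $\Hr^3(X(\Rb),\Zb/2)=0$. Hence each $f$ has at most one lift up to homotopy, so the fibre map is injective. Combining the two steps gives the bijection $e:\Vsc_2(X,\Lc)\to\widetilde{\CH}^2(X,\Lc)$.

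The only delicate point is the bookkeeping in the injectivity step. One must check that the lifts of $f$ are really parametrised by cohomology with coefficients twisted by the specific line bundle $\Lc$ obtained from $f$ through $k_1$, so that the Proposition applies with exactly this $\Lc$. This is the twisted Moore--Postnikov formalism of \cite[§6.1]{asokSplittingVectorBundles2014}, used in the same way as in the proof of \cite[Theorem 6.6]{asokCohomologicalClassificationVector2014}. We would quote that argument directly, replacing the vanishing result used there with the Proposition.
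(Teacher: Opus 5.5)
Your proposal is correct and is essentially the paper's own proof: $\Vsc_2(X)\cong[X_+,\BGL_2^{(3)}]_{\Abb^1}$, the map to $[X_+,\BGL_2^{(2)}]_{\Abb^1}$ is surjective for dimension reasons and injective because its fibres are quotients of $\Hr^3(X,\pi_3(\BGL_2)(\Lc))$, which vanishes by the preceding Proposition, and the induced map on fibres over $\{\Lc\}$ is Morel's Euler class. Your coefficient sheaf $\pi_3(\BGL_2)(\Lc)$ is the correct one (the paper's proof writes $\pi_2(\BGL_3)(\xi)$, which is a typo).
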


\begin{proof}
As noted previously, since $X$ is a threefold, the canonical maps $\BGL_2\to\BGL_2^{(2)}$ and $\BGL_2\to\BGL_2^{(3)}$ of the Postnikov tower induce a surjective map $[X_+,\BGL_2]_{\Abb^1}\to[X_+,\BGL_2^{(2)}]_{\Abb^1}$ and a bijective map $[X_+,\BGL_2]_{\Abb^1}\to[X_+,\BGL_2^{(3)}]_{\Abb^1}$. Moreover, by the previous proposition, the fibres of the map $[X_+,\BGL_2^{(3)}]_{\Abb^1}\to[X_+,\BGL_2^{(2)}]_{\Abb^1}$ are quotients of sets of the form $\Hr^3(X,\pi_2(\BGL_3)(\xi))=*$ so they are singletons hence the map $[X_+,\BGL_2^{(3)}]_{\Abb^1}\to[X_+,\BGL_2^{(2)}]_{\Abb^1}$ is injective. Therefore the map $\Vsc_2(X)\to[X_+,\BGL_2^{(2)}]_{\Abb^1}$ is a bijection of sets over $\Pic(X)$ so it induces a bijection $\Vsc_2(X,\Lc)\to\widetilde{\CH}^2(X,\Lc)$ of fibres over the isomorphism class of $\Lc$.
\end{proof}

Thus the first Chern class and the Euler class provide a full cohomological classification of rank $2$ bundles on smooth real affine threefolds whose real locus has no compact connected component. To relate this to the bijectivity of $\varphi_2(X)$, we must make stronger assumptions on the real locus.

\begin{lem}\label{lem:chow_witt_equal_chow_small_real_locus}
Let $X$ be a smooth affine threefold and let $\Lc$ be a line bundle on $X$; suppose that $\Hr^3(X(\Rb),\Zb/2)=0$ and $\Hr^2(X(\Rb),\Zb(\Lc(\Rb)))=0$. Then the map $\widetilde{\CH}^2(X,\Lc)\to\CH^2(X)$ is injective.
\end{lem}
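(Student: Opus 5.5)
The plan is to use the exact sequence coming from $0\to\Ibf^3(\Lc)\to\Kbf_2^\MW(\Lc)\to\Kbf_2^\Mr\to 0$. Its cohomology long exact sequence contains
\[\Hr^2(X,\Ibf^3(\Lc))\to\widetilde{\CH}^2(X,\Lc)\to\CH^2(X),\]
so the kernel of the comparison map is the image of $\Hr^2(X,\Ibf^3(\Lc))$. It therefore suffices to show that $\Hr^2(X,\Ibf^3(\Lc))=0$. The same conclusion holds if we only show this group maps to zero, but vanishing is the natural target.

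To compute $\Hr^2(X,\Ibf^3(\Lc))$, I will use the real cycle class map. By Jacobson's theorem \cite{jacobsonRealCohomologyPowers2017}, in the form of \cite[Theorem 3.11]{hornbostelRealCycleClass2021} already used for $\Ibf^5$ in the previous proposition, the map $\gamma^2_j:\Hr^2(X,\Ibf^j(\Lc))\to\Hr^2(X(\Rb),\Zb(\Lc(\Rb)))$ is an isomorphism once $j\geqslant\dim X+1=4$. Hence $\Hr^2(X,\Ibf^4(\Lc))\cong\Hr^2(X(\Rb),\Zb(\Lc(\Rb)))=0$ by hypothesis. Alternatively, one could try to invoke \cite[Theorem 3.5]{lerbetImageHigherSignature2026}, which gave $\gamma^3$ as an isomorphism in degree $3$ earlier; it might directly give $\gamma^2_3$ an isomorphism in degree $2$ for $X$ affine of dimension $3$, which would finish at once.

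If only the $\Ibf^4$ statement is available, I will use the exact sequence $0\to\Ibf^4(\Lc)\to\Ibf^3(\Lc)\to\overline{\Ibf}^3\to0$. This gives
\[\Hr^2(X,\Ibf^4(\Lc))=0\to\Hr^2(X,\Ibf^3(\Lc))\to\Hr^2(X,\overline{\Ibf}^3).\]
By the Milnor conjecture, $\overline{\Ibf}^3\cong\Kbf^\Mr_3/2$, and $\Hr^2(X,\Kbf_3^\Mr/2)$ is the Bloch–Ogus group $\Hr^2(X,\Hsc^3)$. So the problem reduces to showing that the image of $\Hr^2(X,\Ibf^3(\Lc))$ in $\Hr^2(X,\Hsc^3)$ vanishes, or that the relevant portion of $\Hr^2(X,\Hsc^3)$ is controlled by the real locus. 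For the latter, I would use the comparison $\Hr^p(X,\Hsc^q)\to\Hr^p(X(\Rb),\Zb/2)$, which is an isomorphism for $q>\dim X$ (Colliot-Thélène–Parimala / Jacobson). Here $q=3=\dim X$ is borderline, which is exactly where the hypothesis $\Hr^3(X(\Rb),\Zb/2)=0$ should enter, for instance through the sequence relating $\Hr^2(X,\Hsc^3)$ and $\Hr^2(X,\Hsc^4)\cong\Hr^2(X(\Rb),\Zb/2)$ via multiplication by $\{-1\}$.

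The main obstacle is this borderline step: proving $\Hr^2(X,\Ibf^3(\Lc))=0$ rather than only $\Hr^2(X,\Ibf^4(\Lc))=0$. I would first try to cite the degree-$2$ version of Jacobson's comparison in the "one below the stable range" form for affine $X$ (as in \cite[§4]{asokSplittingVectorBundles2025}). If that is not available, I would chase the ladder comparing $\Ibf^3\subseteq\Ibf^2$ with their real counterparts, using $\Hr^3(X(\Rb),\Zb/2)=0$ to kill the mod $2$ ambiguity. Granting the vanishing, the injectivity follows immediately from the first exact sequence.
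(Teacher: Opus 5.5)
There is a genuine gap. Your reduction is the same as the paper's: the exact sequence $\Hr^2(X,\Ibf^3(\Lc))\to\widetilde{\CH}^2(X,\Lc)\to\CH^2(X)$ shows it suffices to prove $\Hr^2(X,\Ibf^3(\Lc))=0$. But you never prove this vanishing, and it is the entire content of the lemma. The paper gets it from \cite[Proposition 3.1.1]{asokSplittingVectorBundles2025}, which computes, for a smooth real affine threefold,
\[\Hr^2(X,\Ibf^3(\Lc))\cong\Hr^3(X(\Rb),\Zb/2)\oplus\Hr^2(X(\Rb),\Zb(\Lc(\Rb))).\]
Each hypothesis then kills one summand.

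Your partial argument is correct as far as it goes. Jacobson gives $\Hr^2(X,\Ibf^4(\Lc))\cong\Hr^2(X(\Rb),\Zb(\Lc(\Rb)))=0$. The absence of compact components also gives $\Hr^3(X,\Ibf^4(\Lc))\cong\Hr^3(X(\Rb),\Zb(\Lc(\Rb)))=0$, so you even get $\Hr^2(X,\Ibf^3(\Lc))\cong\Hr^2(X,\Hsc^3)$. But this only moves the problem: you now need $\Hr^2(X,\Hsc^3)=0$. That is a nontrivial statement in the borderline degree $q=\dim X$, where the comparison with the real locus is no longer an isomorphism. Your sketch via $\cup\{-1\}:\Hsc^3\to\Hsc^4$ ignores the kernel of this map, which is the image of the norm from $X_\Cb$. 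Its cohomology in degrees $2$ and $3$ would have to be controlled, for instance using the vanishing of $\Hr^2(X_\Cb,\Hsc^3)$ on the complex affine threefold together with the full Arason-type exact sequence of sheaves. None of this is carried out, and "granting the vanishing" grants the whole lemma.

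Your proposed shortcut is wrong. \cite[Theorem 3.5]{lerbetImageHigherSignature2026} is a top-degree statement about $\Hr^d(X,\Ibf^d(\Lc))$; the paper uses it only for $\Hr^2(X,\Ibf^2)$ on surfaces and $\Hr^3(X,\Ibf^3)$ on threefolds. The map $\gamma^2_3:\Hr^2(X,\Ibf^3(\Lc))\to\Hr^2(X(\Rb),\Zb(\Lc(\Rb)))$ is not an isomorphism on affine threefolds: by the decomposition above, its kernel is the summand $\Hr^3(X(\Rb),\Zb/2)$.

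This should have made you suspicious. If $\gamma^2_3$ were an isomorphism, the hypothesis $\Hr^3(X(\Rb),\Zb/2)=0$ would be superfluous. Remark \ref{rema:hypotheses_classification_rank_2} shows it is not: for the complement $U$ of a smooth real-pointless hypersurface in $\Pb_\Rb^3$, there is a nonzero element of $\Hr^2(U,\Ibf^3)$ coming from $\Hr^3(U(\Rb),\Zb/2)$. It survives in $\widetilde{\CH}^2(U)$ and lies in the kernel of $\widetilde{\CH}^2(U)\to\CH^2(U)$.
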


\begin{proof}
The exact sequence \[0\to\Ibf^3(\Lc)\to\Kbf_2^\MW(\Lc)\to\Kbf_2^\Mr\to 0\] of sheaves on $X$ induces an exact sequence \[\Hr^2(X,\Ibf^3(\Lc))\to\widetilde{\CH}^2(X,\Lc)\to\CH^2(X)\] so it suffices to prove that $\Hr^2(X,\Ibf^3(\Lc))=0$. By \cite[Proposition 3.1.1]{asokSplittingVectorBundles2025}, there is an isomorphism $\Hr^2(X,\Ibf^3(\Lc))\simeq\Hr^3(X(\Rb),\Zb/2)\oplus\Hr^2(X(\Rb),\Zb(\Lc(\Rb)))$ so the vanishing of $\Hr^2(X,\Ibf^3(\Lc))$ follows from our assumptions on $X(\Rb)$.
\end{proof}

\begin{theo}\label{theo:coh_classification_rank_2}
Let $X$ be a smooth real affine threefold and suppose that $\Hr^3(X(\Rb),\Zb/2)=0$ and $\Hr^2(X(\Rb),\Zb(\Lc(\Rb)))=0$ for every line bundle $\Lc$ on $X$. Then $\varphi_2(X)$ is bijective.
\end{theo}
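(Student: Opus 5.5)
We will prove injectivity and surjectivity separately, working fibrewise over $\Pic(X)$ via the determinant.

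\emph{Injectivity.} Let $E,E'$ be rank $2$ bundles on $X$ with $\varphi_2(E)=\varphi_2(E')$. Since $c_1$ is the determinant under $\CH^1(X)\cong\Pic(X)$, we have $\det E\simeq\det E'=:\Lc$, so $\{E\},\{E'\}\in\Vsc_2(X,\Lc)$. Because $\Hr^3(X(\Rb),\Zb/2)=0$, Corollary \ref{cor:euler_bijective_no_compact} shows that $e:\Vsc_2(X,\Lc)\to\widetilde{\CH}^2(X,\Lc)$ is a bijection. The composite of $e$ with the comparison map $\widetilde{\CH}^2(X,\Lc)\to\CH^2(X)$ is $c_2$, by \cite[Remark 7.22]{morelA1AlgebraicTopologyField2012} as in the proof of Lemma \ref{lem:surjectivity_rank_2_surfaces}. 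The hypotheses give $\Hr^2(X(\Rb),\Zb(\Lc(\Rb)))=0$ for this particular $\Lc$, so Lemma \ref{lem:chow_witt_equal_chow_small_real_locus} shows the comparison map is injective. Hence $c_2(E)=c_2(E')$ forces $e(E)=e(E')$, and therefore $E\simeq E'$.

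\emph{Surjectivity.} Let $(c_1,c_2)\in\CH^1(X)\times\CH^2(X)$ and set $w_i=\overline{\gamma}^i(c_i)$. By Theorem \ref{theo:question_B_kucharz}, it suffices to check that $\Sq_{w_1}(w_2)=0$ in $\Hr^3(X(\Rb),\Zb/2)$. This group vanishes by hypothesis, so the condition holds automatically.

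Alternatively, one can argue directly without Theorem \ref{theo:question_B_kucharz}. By the bijectivity of $e$, surjectivity amounts to surjectivity of $\widetilde{\CH}^2(X,\Lc)\to\CH^2(X)$. Its cokernel injects into $\Hr^3(X,\Ibf^3(\Lc))$, which is isomorphic to $\Hr^3(X(\Rb),\Zb(L))$ by \cite[Theorem 3.5]{lerbetImageHigherSignature2026}. That group is a sum indexed by the compact components of $X(\Rb)$, and there are none because $\Hr^3(X(\Rb),\Zb/2)=0$.

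The only point needing care is matching $e$ followed by the forgetful map with $c_2$, together with the identification of $\det$ with $c_1$ via the triangle (\ref{eq:classification_rank_2_bundles}). Both are already used earlier in the paper, so I expect no serious obstacle: the theorem is essentially a formal combination of Corollary \ref{cor:euler_bijective_no_compact}, Lemma \ref{lem:chow_witt_equal_chow_small_real_locus} and Theorem \ref{theo:question_B_kucharz}.
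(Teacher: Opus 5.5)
Your proof is correct and follows the paper's argument. You work fibrewise over $\Pic(X)$, use Corollary \ref{cor:euler_bijective_no_compact} for the bijectivity of $e$ and Lemma \ref{lem:chow_witt_equal_chow_small_real_locus} for the injectivity of $\widetilde{\CH}^2(X,\Lc)\to\CH^2(X)$, and identify the composite with $c_2$. Your separate treatment of surjectivity, either via Theorem \ref{theo:question_B_kucharz} or via the cokernel sitting inside $\Hr^3(X,\Ibf^3(\Lc))\cong\Hr^3(X(\Rb),\Zb(\Lc(\Rb)))=0$, is a welcome addition: the paper's proof calls the composite ``a composite of bijections'', but the lemma it cites only establishes injectivity of the comparison map.
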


\begin{proof}
If $\Lc$ is a line bundle on $X$, then the composite \[\Vsc_2(X,\Lc)\xrightarrow{e}\widetilde{\CH}^2(X,\Lc)\to\CH^2(X)\] is a composite of bijections by Corollary \ref{cor:euler_bijective_no_compact} and Lemma \ref{lem:chow_witt_equal_chow_small_real_locus}. Moreover, the composite is the second Chern class $c_2:\Vsc_2(X,\Lc)\hookrightarrow\Vsc_2(X)\to\CH^2(X)$ by \cite[Remark 7.22]{morelA1AlgebraicTopologyField2012}. Thus the second Chern class induces a bijection from each fibre of $\Vsc_2(X)$ over $\Pic(X)$ to $\CH^2(X)$. This is a reformulation of the bijectivity of $\varphi_2(X)$.
\end{proof}

\begin{rema}\label{rema:hypotheses_classification_rank_2}
Let $X$ be a smooth real affine threefold. There is an isomorphism $\Hr^2(X,\Ibf^3(\Lc))\simeq\Hr^3(X(\Rb),\Zb/2)\oplus\Hr^2(X(\Rb),\Zb(\Lc(\Rb)))$ for every line bundle $\Lc$ on $X$. Both factors $\Hr^3(X(\Rb),\Zb/2)$ and $\Hr^2(X(\Rb),\Zb(\Lc(\Rb)))$ could induce a defect of injectivity of the map $\widetilde{\CH}^2(X,\Lc)\to\CH^2(X)$, hence a failure of Theorem \ref{theo:coh_classification_rank_2} to hold. We observe here that both points of failure do in fact manifest.
\begin{itemize}
	\item Consider the real algebraic $2$-sphere $\Sr_\Rb^2$ and set $X=\Sr_\Rb^2\times\Abb^1$ (in particular, the real locus $X(\Rb)$ of $X$ has no compact connected component). Then $\widetilde{\gamma}^2:\widetilde{\CH}^2(X)\to\Hr^2(X(\Rb),\Zb)\cong\Hr^2(\Sr_\Rb^2,\Zb)=\Zb$ is an isomorphism, while $\CH^2(\Sr_\Rb^2)=\Zb/2$. Indeed this follows from the computations of Example \ref{exe:algebraic_vector_bundles_on_real_two_sphere} and from the $\Abb^1$-invariance of the cohomology theories involved.
	\item Let $U$ be the affine complement of a smooth hypersurface $X\subseteq\Pb_\Rb^3$ of degree $\delta\geqslant 4$ congruent to $2$ mod $4$ such that $X(\Rb)$ is empty and such that the restriction map $\Pic(\Pb_\Rb^3)\to\Pic(X)$ is surjective (see \cite[Proposition 3.2.2]{asokSplittingVectorBundles2025} for the existence of such hypersurfaces). By \cite[Lemma 3.2.8]{asokSplittingVectorBundles2025}, the restriction of the map $\alpha:\Hr^2(U,\Ibf^3)\to\widetilde{\CH}^2(U)$ to the summand $\Hr^3(U(\Rb),\Zb/2)$ is injective. However, the restriction of $\alpha$ to $\Hr^2(U(\Rb),\Zb)$ is trivial. Indeed there is a commutative square
\[\begin{tikzcd}
	{\Hr^2(\Pb_\Rb^3,\Ibf^3)} & {\Hr^2(\Pb_\Rb^3,\Ibf^3)} \\
	{\Hr^2(\Pb^3(\Rb),\Zb)} & {\Hr^2(U(\Rb),\Zb)}
	\arrow["{j^*}", from=1-1, to=1-2]
	\arrow["{\gamma^2}"', from=1-1, to=2-1]
	\arrow["{\gamma^2}", from=1-2, to=2-2]
	\arrow["{j(\Rb)^*}"', from=2-1, to=2-2]
\end{tikzcd}\]
induced by the open immersion $j:U\hookrightarrow\Pb_\Rb^3$ whose bottom horizontal morphism is an isomorphism because $X(\Rb)=\emptyset$ by choice of $X$ and whose left vertical map is an isomorphism by \cite[5.7 Theorem (a)]{hornbostelRealCycleClass2021}. In view of the commutative square
\[\begin{tikzcd}
	{\Hr^2(\Pb_\Rb^3,\Ibf^3)} & {\Hr^2(U,\Ibf^3)} \\
	{\widetilde{\CH}^2(\Pb_\Rb^3)} & {\widetilde{\CH}^2(U)}
	\arrow["{j^*}", from=1-1, to=1-2]
	\arrow[from=1-1, to=2-1]
	\arrow[from=1-2, to=2-2]
	\arrow["{j^*}"', from=2-1, to=2-2]
\end{tikzcd}\]
it suffices to prove that the map $\Hr^2(\Pb_\Rb^3,\Ibf^3)\to\widetilde{\CH}^2(\Pb_\Rb^3)$ is trivial. This is clear since $\Hr^2(\Pb_\Rb^3,\Ibf^3)\simeq\Hr^2(\Pb^3(\Rb),\Zb)=\Zb/2$ is $2$-torsion and $\widetilde{\CH}^2(\Pb_\Rb^3)=\Zb$ is torsion free (\cite[Corollary 11.8]{faselProjectiveBundleTheorem2013}). Thus the failure of the map $\widetilde{\CH}^2(U)\to\CH^2(U)$ to be injective indeed comes from the summand $\Hr^3(U(\Rb),\Zb/2)$ in $\Hr^2(U,\Ibf^3)$. We come back to this example in Proposition \ref{prop:ce_qu_A} below.
\end{itemize}
\vspace{-\topsep}\vspace{-\topsep}
\end{rema}

\begin{exe}
Let $X$ be a smooth real threefold such that $X(\Rb)$ is empty. Then by Theorem \ref{theo:coh_classification_rank_2}, the map $\varphi_2(X)$ is injective. In particular, cancellation holds for rank $2$ bundles: if $E$ and $F$ are rank~$2$ vector bundles on $X$ and if $E\oplus\Osc_X^n\simeq F\oplus\Osc_X^n$ for some $n$, then $E\simeq F$. Indeed, since Chern classes are stable, namely invariant under adding a trivial bundle, the bundles $E$ and $F$ have the same Chern classes, hence their isomorphism classes $\{E\}$ and $\{F\}$ have the same image under $\varphi_2(X)$. Since $\varphi_2(X)$ is injective, we conclude that $E$ and $F$ are isomorphic. As a particular case, if $E$ is a stably trivial rank~$2$ bundle, that is, if $E\oplus\Osc_X^n\simeq\Osc_X^2\oplus\Osc_X^n$ for some $n$, then $E\simeq\Osc_X^2$ is in fact trivial.

We can compare this to the results of \cite{syedStablyFreeModules2026}. In this paper, Syed consider smooth affine threefolds $U$ over $\Rb$ that further enjoy property P, which consists in the satisfaction of one of the following two assertions:
\begin{itemize}
	\item[(i)] the real locus $U(\Rb)$ of $U$ is empty;
	\item[(ii)] the intersection of the \emph{real} maximal ideals of $\Osc(U)$, namely those maximal ideals $\m$ such that $\Osc(U)/\m\simeq\Rb$, has height $\geqslant 1$.
\end{itemize}
But if $U$ is a smooth variety over $\Rb$ and $U(\Rb)$ is nonempty, then $U(\Rb)$ is (Zariski-)dense in $U$ (\cite[Proposition 1.1]{benoistHilberts17thProblem2017}; this well-known statement goes back to Artin) so it is not contained in a closed subset of dimension $<\dim(U)$ and thus the intersection of the real maximal ideals of $\Osc(U)$ has height $0$. Thus a \emph{smooth} variety $U$ with property P does not satisfy (ii), hence has empty real locus. Now Syed proves that if $U$ is a smooth variety satisfying property P or, equivalently, having empty real locus, then stably trivial rank $2$ vector bundles on $U$ are trivial if, and only if, a certain Hermitian $K$-theory group $\Wr_{\mathrm{SL}}(U)$ vanishes. This equivalence is most useful because, as mentioned in the introduction of \cite{syedStablyFreeModules2026}, as part of a cohomology theory, the group $\Wr_{\mathrm{SL}}(U)$ is as computable as can reasonably expected, so that the equivalence proven in \cite{syedStablyFreeModules2026} gives an accessible criterion for the triviality of stably trivial rank $2$ bundles on the varieties under consideration. But in fact, the argument of the previous paragraph shows that these equivalent assertions actually hold.
\end{exe}

\section{Rank 3 vector bundles}\label{sec:rank_3_bundles}

\subsection{Kucharz' theorem over general fields}

Our aim in this subsection is to prove the following theorem.

\begin{theo}[Fasel]\label{theo:Kucharz_over_a_field}
Let $k$ be a field. Let $X$ be a smooth affine threefold over $k$ and let $(c_i)_i\in\CH^1(X)\times\CH^2(X)\times\CH^3(X)$ be a triple. The following assertions are then equivalent.
\begin{itemize}
	\item[(i)] There exists a rank $3$ vector bundle $E$ on $X$ such that $c_i(E)=c_i$ for every $i$.
	\item[(ii)] Denoting by $\overline{c_i}$ the reduction mod $2$ of $c_i$ in $\Ch^i(X)$, one has $\overline{c_3}=\Sq_{\overline{c_1}}(\overline{c_2})$ in $\Ch^3(X)$.
\end{itemize}
\vspace{-\topsep}\vspace{-\topsep}
\end{theo}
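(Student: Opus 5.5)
The plan is to reduce everything to a computation in $\Kr_0(X)$, using two facts. First, since $X$ is affine of dimension $3$, Serre's splitting theorem shows that every class in $\Kr_0(X)$ of rank $3$ is the class of a rank $3$ vector bundle. Second, Chern classes factor through $\Kr_0(X)$. Consequently the image of $\varphi_3(X)$ is exactly the set of triples $(c_1(x),c_2(x),c_3(x))$ with $x\in\Kr_0(X)$ of rank $3$, and the question becomes which triples of Chow classes occur as Chern classes of virtual rank $3$ classes.

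The second step describes the freedom in $c_3$ once $(c_1,c_2)$ is fixed. We use the codimension filtration $\Fr^\bullet\Kr_0(X)$ of Section \ref{sec:preliminaries} together with Riemann--Roch without denominators. The latter gives the following.
\begin{itemize}
\item For $Z$ of codimension $3$, $c_3([\Osc_Z])=2[Z]$ up to sign, and $c_1,c_2$ of $\Fr^3$-classes vanish.
\item For $W$ of codimension $2$, $c_1([\Osc_W])=0$ and $c_2([\Osc_W])=-[W]$.
\item On a threefold, $\CH^2(X)\to\Gra^2\Kr_0(X)$ is an isomorphism, since its kernel is killed by $(2-1)!=1$.
\end{itemize}
Hence two rank $3$ classes with the same $(c_1,c_2)$ differ by an element of $\Fr^3\Kr_0(X)$. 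Since $\CH^3(X)\to\Fr^3\Kr_0(X)$ is surjective, their $c_3$'s differ by an arbitrary element of $2\CH^3(X)$. Thus for fixed $(c_1,c_2)$ the realisable $c_3$ form a single coset of $2\CH^3(X)$. It then suffices to exhibit one rank $3$ class $x$ with the prescribed $c_1,c_2$ and to show $\overline{c_3(x)}=\Sq_{\overline{c_1}}(\overline{c_2})$ in $\Ch^3(X)$. This settles both implications at once, since (ii) then says exactly that $c_3$ lies in the right coset.

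For the explicit class, write $\Lc$ for the line bundle with $c_1(\Lc)=c_1$ and write $c_2=\sum n_j[W_j]$. I would take
\[x=[\Lc]+2-\sum_j n_j[\Osc_{W_j}]\otimes[\Lc']\]
for a suitable line bundle $\Lc'$, chosen so that $c_1(x)=c_1$ and $c_2(x)=c_2$ after the correction terms from the Whitney formula. Then I would compute $c_3(x)$ modulo $2$. The Whitney and tensor-product formulas produce a term $\overline{c_1}\cdot\overline{c_2}$, and the genuinely new term is $\overline{c_3([\Osc_W])}$. For $i:W\hookrightarrow X$ of codimension $2$ with normal bundle $N$ (when $W$ is smooth, or at least a local complete intersection), Riemann--Roch without denominators gives $c_3(i_*\Osc_W)=\pm i_*c_1(N)$ modulo $2$. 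By the Wu formula for Voevodsky--Brosnan Steenrod squares, $i_*\overline{c_1(N)}=\Sq(\overline{[W]})$. Assembling these gives $\overline{c_3(x)}=\Sq(\overline{c_2})+\overline{c_1}\cdot\overline{c_2}=\Sq_{\overline{c_1}}(\overline{c_2})$.

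The main obstacle is this last computation: pinning down the coefficients and signs in $c_3(i_*\Osc_W)$, and handling cycles $W$ that are not smooth. For the latter I would first try to represent $\overline{c_2}$ by smooth (or lci) subvarieties via a moving or Bertini argument on the affine threefold. If that fails, I would represent $c_2$ as $c_2$ of a rank $2$ class, using Lemma \ref{lem:surjectivity_rank_2_surfaces}-type arguments on a surface section. Then $\Sq$ of a top Chern class is given by the Wu-type formula $\Sq(c_2(F))=c_1(F)c_2(F)+c_3(F)$ mod $2$ in K-theoretic form, which avoids supports altogether. If $k$ has characteristic $2$, one must make sense of $\Sq$ on $\Ch^*$ (Brosnan's construction works in all characteristics), and I expect the same formal argument to go through.
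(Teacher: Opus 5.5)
Your reduction is the paper's. Serre splitting identifies $\Im\varphi_3(X)$ with the Chern classes of rank-$3$ classes in $\Kr_0(X)$. Since $\pi_1$ and $\pi_2$ are isomorphisms, two such classes with the same $(c_1,c_2)$ differ by an element of $\Fr^3\Kr_0(X)=\pi_3(\CH^3(X))$. Finally, $c_3\circ\pi_3=\pm 2$ lets $c_3$ run through a full coset of $2\CH^3(X)$.

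You diverge in how the mod $2$ value of $c_3$ on that coset is determined. The paper applies the implication (i)$\Rightarrow$(ii), i.e.\ the Wu formula $\overline{c_3(E)}=\Sq(\overline{c_2(E)})+\overline{c_1(E)}\cdot\overline{c_2(E)}$, to a rank $3$ bundle $E$ realising $(c_1,c_2)$. That formula comes from the Adem relations as in Hsiang, or equally from the Cartan formula, $\Sq(\ell)=\ell^2$ for $\ell\in\Ch^1$, and the splitting principle.

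Your primary route instead computes $\overline{c_3}$ of an explicit class by Riemann--Roch without denominators, and as it stands this has a gap. Both $c_3(i_*\Osc_W)\equiv i_*c_1(N_{W/X})$ and $\Sq(\overline{[W]})=i_*\overline{c_1(N_{W/X})}$ are statements about smooth (at best regularly embedded) $W$. So you need $\Ch^2(X)$ to be generated by classes of such curves.

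Your own reduction shows this is the entire content of the theorem. Indeed, $c_3$ is additive on $\Fr^2\Kr_0(X)$ and sends $\Fr^3\Kr_0(X)$ into $2\CH^3(X)$, so the statement is equivalent to the single identity $\overline{c_3}\circ\pi_2=\Sq$ of homomorphisms $\Ch^2(X)\to\Ch^3(X)$. Checking it on generators requires knowing that $\Ch^2(X)$ is generated by classes for which both formulas hold, e.g.\ via a smoothing result for $1$-cycles. You do not supply this, and it is not available off the shelf over an arbitrary field $k$. If you had it, your route would re-derive the Wu relation from Riemann--Roch without denominators and the normal-bundle formula for $\Sq$: a genuinely different but longer argument.

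The fallback you mention is what actually closes the argument, and it is the paper's route. Allow the Wu formula for arbitrary classes in $\Kr_0(X)$ (equivalently for bundles, since on affine $X$ every class is a bundle minus a trivial one). Applied directly to $x$, or to any rank $3$ bundle with the prescribed $c_1,c_2$, it gives $\overline{c_3(x)}=\Sq_{\overline{c_1}}(\overline{c_2})$ at once. Then (i)$\Rightarrow$(ii) is just that formula rather than a by-product of the coset argument.

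The smoothing question, the twist $\Lc'$ and the surface sections then all become unnecessary:
\begin{itemize}
\item The twist $\Lc'$ is superfluous anyway: for $y\in\Fr^2\Kr_0(X)$, the difference $y\otimes\Lc'-y=y\cdot([\Lc']-1)$ lies in $\Fr^3\Kr_0(X)$. It changes neither $c_1$ nor $c_2$, and changes $c_3$ only by an element of $2\CH^3(X)$.
\item Lemma \ref{lem:surjectivity_rank_2_surfaces} is a Chow--Witt statement about surfaces and plays no role here. The class $2-\sum_j n_j[\Osc_{W_j}]$ already has $c_1=0$ and $c_2=c_2$.
\end{itemize}

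Finally, drop the aside on characteristic $2$: Brosnan's construction, like Voevodsky's, assumes the characteristic is different from $2$.
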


We thank Jean Fasel for communicating to us the following argument.

\begin{proof}
The fact that (i) implies (ii) can be checked using the Adem relations (\cite[Section 11]{brosnanSteenrodOperationsChow2003}, \cite[Section 10]{voevodskyReducedPowerOperations2003}) as in \cite{hsiangWusFormulaSteenrod1963} (it can also be checked directly using the definition of Steenrod squares on Chow groups mod $2$).

Conversely, suppose that (ii) holds. Recall from \cite[§3]{grothendieckTheorieClassesChern1958} that there are group homomorphisms \[\pi_i:\CH^i(X)\to\Gra^i\Kr_0(X),\;c_i:\Gra^i\Kr_0(X)\to\CH^i(X)\] where the morphism $\pi_i$ carries the class of a codimension $i$ subvariety $Y$ to the $\Kr$-theory class of the coherent sheaf $\Osc_Y$, the morphism $c_i$ maps the class $[F]$ of a coherent sheaf $F$ on $X$ supported in codimension $\geqslant i$ to its $i$-th Chern class $c_i(F)$, and $c_i\circ\pi_i$ is multiplication by $(-1)^{i-1}(i-1)!$.

Since $c_2$ is surjective, there exists $\alpha\in\Fr^2\Kr_0(X)$ such that $c_2(\alpha)=c_2$. Since $\alpha\in\Fr^2\Kr_0(X)$, we see that $c_1(\alpha)=0$. Let further $L$ be a line bundle such that $c_1(L)=c_1$. Then by Whitney's formula, one has $c_1(\alpha+[L])=c_1(\alpha)+c_1(L)=c_1$ since $c_1(\alpha)=0$, and $c_2(\alpha\oplus[L])=c_2(\alpha)+c_1(\alpha)\cdot c_1(L)+c_2(L)$ where $c_1(\alpha)=0$ again and $c_2(L)=0$ since $L$ is a vector bundle of rank $1$ so $c_2(\alpha+[L])=c_2(\alpha)=c_2$. Since $X$ has dimension $3$, it follows from Serre's splitting theorem \cite[Théorème 1]{serreModulesProjectifsEspaces1957} that $\alpha+[L]\in\Kr_0(X)$ is of the form $[\Osc_X^r]+[E]-[\Osc_X^3]$ where $E$ has rank $3$ and $r=\rk(\alpha)+\rk(L)$ is the rank of $\alpha+[L]$; note that $\alpha$ has rank $0$ since $\alpha\in\Fr^2\Kr_0(X)\subseteq\Fr^1\Kr_0(X)=\Ker\rk$ so we obtain $\alpha+[L]=[E]-[\Osc_X^2]\in\Kr_0(X)$. In particular, by stability of Chern classes, one has $c_i(E)=c_i(\alpha+[L])=c_i$ for $i\in\{1,2\}$ by construction.

Now since (i) implies (ii), the relation \[\overline{c_3}(E)-\Sq_{\overline{c_1}(E)}(\overline{c_2}(E))=\overline{c_3}(E)-\Sq_{\overline{c_1}}(\overline{c_2})=0\] between the mod $2$ Chern classes of $E$ is satisfied, where $\overline{c_i}(E)$ is the image of $c_i(E)$ in $\Ch^i(X)$. On the other hand, by (ii), we also have $\overline{c_3}-\Sq_{\overline{c_1}}(\overline{c_2})=0$. It follows that $\overline{c_3}(E)-\overline{c_3}=0$ in $\Ch^3(X)$. In other words, there exists $b_3\in\CH^3(X)$ such that $2b_3=c_3(E)-c_3$. Now the composite \[\CH^3(X)\xrightarrow{\pi_3}\Gra^3\Kr_0(X)\xrightarrow{c_3}\CH^3(X)\] is multiplication by $-2$ so $c_3\circ\pi_3(b_3)=-2b_3=c_3-c_3(E)$ in $\CH^3(X)$. Since $X$ is a threefold, the group $\Fr^4\Kr_0(X)$ vanishes so we may view $\beta=\pi_3(b_3)$ as an element of $\Fr^3\Kr_0(X)$. Since $\beta\in\Fr^3\Kr_0(X)$, the equalities $c_1(\beta)=0$ and $c_2(\beta)=0$ hold as before. Thus using the Whitney formula again, it is easy to check that $[E]\oplus\beta$ has Chern classes $c_i([E]+\beta)=c_i$ for every $i$. Finally, again because of Serre's splitting theorem, there exists a rank $3$ bundle $E'$ on $X$ such that $[E]+\beta=[\Osc_X^r]+[E']-[\Osc_X^3]$ in $\Kr_0(X)$ where $r=\rk(E)+\rk(\beta)$. Since $\beta$ lies in $\Fr^3\Kr_0(X)$, its rank is zero so $r=\rk(E)=3$ and thus $[E]+\beta=[E']$. We conclude that $c_i(E')=c_i([E'])=c_i([E]+\beta)=c_i$ as required.
\end{proof}

\begin{exe}\label{exe:divisible_chow_group}
If $\Ch^3(X)=0$, that is, if $2\CH^3(X)=\CH^3(X)$, then assertion (ii) in Theorem \ref{theo:Kucharz_over_a_field} obviously holds so $\varphi_3(X)$ is surjective. The equality $\CH^3(X)=2\CH^3(X)$ holds if $k$ is algebraically closed since $\CH^d(X)$ is in fact divisible in this case (see, \emph{e.g.}, \cite[Lemma 1.2]{colliot-theleneZerocyclesCohomologyReal1996}).
\end{exe}

Thanks to this statement, we can give an efficient proof of Kucharz's theorem.

\begin{proof}[Proof of Kucharz's theorem]
Let $X$ be a smooth real affine threefold and let $(c_i)_i\in\CH^1(X)\times\CH^2(X)\times\CH^3(X)$; set $w_i=\overline{\gamma}^i(c_i)$. We note that \[\overline{\gamma}^3(\overline{c_3}-\Sq_{\overline{c_1}}(\overline{c_2}))=\overline{\gamma}^3(c_3)-\overline{\gamma}^3(\Sq_{\overline{c_1}}(\overline{c_2}))=w_3-\Sq_{w_1}(\overline{\gamma}^2(c_2))=w_3-\Sq_{w_1}(w_2).\] Since $X$ is affine, by \cite[Theorem 3.2 (d)]{colliot-theleneZerocyclesCohomologyReal1996}, the map $\overline{\gamma}^3:\Ch^3(X)\to\Hr^3(X(\Rb),\Zb/2)$ is injective so $w_3-\Sq_{w_1}(w_2)=0$ in $\Hr^3(X(\Rb),\Zb/2)$ if, and only if, one has $\overline{c_3}-\Sq_{\overline{c_1}}(\overline{c_2})=0$ in $\Ch^3(X)$ which is equivalent to the assertion that $(c_1,c_2,c_3)$ lies in $\Im\varphi_3(X)$ by Fasel's theorem above.
\end{proof}


\begin{rema}
It is possible to prove Theorem \ref{theo:question_B_kucharz} along the same lines as the proof of Fasel's result. More precisely, let $X$ be a smooth affine threefold over a field $k$ and let $(c_1,c_2)\in\CH^1(X)\times\CH^2(X)$, and suppose that $\Sq_{w_1}(w_2)=0\in\Hr^3(X(\Rb),\Zb/2)$, where $w_i=\overline{\gamma}^i(c_i)$. By \cite[Theorem 3.2 (d)]{colliot-theleneZerocyclesCohomologyReal1996}, the homomorphism $\overline{\gamma}^3$ is injective. Since $\Sq_{w_1}\circ\overline{\gamma}^2=\overline{\gamma}^3\circ\Sq_{c_1}$, the equality $\Sq_{w_1}(w_2)=0$ implies that $\Sq_{\overline{c_1}}(\overline{c_2})=0$ where $\overline{c_i}$ is the image of $c_i$ in $\Ch^i(X)=\CH^i(X)/2$. As in the proof of Theorem \ref{theo:Kucharz_over_a_field}, one constructs a rank $3$ vector bundle $E$ on $X$ such that $c_1(E)=c_1$ and $c_2(E)=c_2$. Then $\overline{c_3}(E)=\Sq_{\overline{c_1}}(\overline{c_2})=0$ in $\Ch^3(X)$ so $c_3(E)$ is of the form $2b_3$ where $b_3\in\CH^3(X)$. We conclude that $c_3(E)=-c_3(\beta)$ for some $\beta\in\Fr^3\Kr_0(X)$ and thus the Chern classes of $[E]+\beta$ are given by the triple $(c_1,c_2,0)$. By Serre's splitting theorem, there exists a rank $3$ vector bundle $E'$ on $X$ such that $[E]+\beta=[E']$. Now $c_3(E')=0$ by construction: assuming now that $k=\Rb$, since $X$ has odd dimension, it follows from \cite[Theorem 4.30]{bhatwadekarProjectiveModulesSmooth2006} that $E'$ splits off a free rank $1$ summand, namely is of the form $E'\simeq E''\oplus\Osc_X$ where $E''$ is a vector bundle of rank $2$ on $X$ (see also \cite[Remark 2.1.2, 3.]{asokSplittingVectorBundles2025}; from the point of view of \cite{asokSplittingVectorBundles2025}, the observation is that the Euler class of a topological vector bundle of top rank reduces to its top Stiefel--Whitney class over an odd-dimensional CW-complex). Then $c_i(E'')=c_i$ for $i\in\{1,2\}$.

Conversely, consider the map $\varphi_3:\BGL_3\to\prod_i\Kr(\Kbf_i^\Mr,i)$ induced by Chern classes and let $f:X_+\to\prod_i\Kr(\Kbf_i^\Mr,i)$ be a morphism of spaces inducing a triple $(c_i)_i\in\prod_i\CH^i(X)$. As mentioned in the introduction, analysis of the Moore--Postnikov tower of $\varphi_3$ shows that there is a unique obstruction $o(f)$ to lifting $f$ along $\varphi_3$ which lives in $\Ch^3(X)$. Fasel's theorem then gives strong evidence for an identification $o(f)=\overline{c_3}-\Sq_{\overline{c_1}}(\overline{c_2})$, at least up to a unit. It may be possible to prove it using Voevodsky's and Hoyois--Kelly--{\O}stv{\ae}r's description of cohomology operations on mod $2$ motivic cohomology (\cite{voevodskyMotivicEilenbergMacLaneSpaces2010,hoyoisMotivicSteenrodAlgebra2017}).
\end{rema}

\subsection{Question (A)}

In this subsection, we show that Question (A) has a \emph{negative} answer. This counter-example relies on the results of \cite{asokSplittingVectorBundles2025}. More precisely, recall the following theorem:

\begin{theo}[\protect{\cite[Theorem 3.2.1]{asokSplittingVectorBundles2025}}]\label{theo:secondary_obstruction_subtle}
Let $\delta\geqslant 4$ be an integer congruent to $2$ mod $4$ and let $X\subseteq\Pb_\Rb^3$ be a smooth surface such that $X(\Rb)$ is empty and the restriction $\Pic(\Pb_\Rb^3)\to\Pic(X)$ is surjective; set $U=\Pb_\Rb^3\setminus X$. Then the restriction $\widetilde{\CH}^2(\Pb_\Rb^3)\to\widetilde{\CH}^2(U)$ induces an isomorphism $\widetilde{\CH}^2(U)\cong\Zb/2\delta$ and there exists a rank $2$ bundle $E$ on $U$ such that $e(E)=\delta$, the Chern classes of $E$ vanish and the topological vector bundle $E(\Rb)$ is trivial.
\end{theo}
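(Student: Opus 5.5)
This theorem is quoted verbatim from \cite[Theorem 3.2.1]{asokSplittingVectorBundles2025}, so in the paper it is simply invoked. If I had to reprove it, the plan would have four steps: compute $\widetilde{\CH}^2(U)$ by localisation, produce a rank $2$ bundle from a Chow--Witt class via the Euler class, then check its Chern classes and its real locus.

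\emph{Step 1: compute $\widetilde{\CH}^2(U)$.} I would use the localisation sequence for the closed immersion $X\hookrightarrow\Pb^3_\Rb$ with open complement $U$:
\[\widetilde{\CH}^1(X,\omega)\to\widetilde{\CH}^2(\Pb^3_\Rb)\to\widetilde{\CH}^2(U)\to\Hr^2(X,\Kbf_1^\MW(\omega)),\]
where $\omega$ is the appropriate twist by the normal bundle, namely $\Osc_X(\delta)$, which is a square because $\delta$ is even. We have $\widetilde{\CH}^2(\Pb^3_\Rb)=\Zb$, generated by the class of a line (\cite[Corollary 11.8]{faselProjectiveBundleTheorem2013}). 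Since $X(\Rb)=\emptyset$, the $\Ibf^j$-cohomology of $X$ vanishes in high degrees by Jacobson's theorem. Hence the Chow--Witt groups of $X$ reduce to Chow groups, and the last term is controlled by $\Hr^2(X,\Kbf_1^\Mr)$.

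The hypothesis that $\Pic(\Pb^3_\Rb)\to\Pic(X)$ is surjective, i.e.\ $\Pic(X)=\Zb\cdot h$ restricted from the hyperplane class, should force two things:
\begin{itemize}
\item[(a)] the image of the pushforward $\widetilde{\CH}^1(X,\omega)\to\widetilde{\CH}^2(\Pb^3)=\Zb$ is generated by the pushforward of the class $h$, which is $2\delta$ times the generator (the hyperbolic/forgetful factor $2$ coming from the real points being empty);
\item[(b)] the map $\widetilde{\CH}^2(\Pb^3_\Rb)\to\widetilde{\CH}^2(U)$ is surjective.
\end{itemize}
Together these give $\widetilde{\CH}^2(U)\cong\Zb/2\delta$. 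The congruence $\delta\equiv 2\pmod 4$ enters exactly here, in pinning down the factor $2$ rather than $1$.

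\emph{Step 2: construct $E$.} On the affine threefold $U$, the Euler class map $\Vsc_2(U,\Osc)\to\widetilde{\CH}^2(U)$ is surjective (the map to the second Postnikov stage is surjective, as recalled in the classification subsection). So I pick $E$ with trivial determinant and $e(E)=\delta$, the image of $\delta$ times the generator.

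\emph{Step 3: Chern classes.} First, $c_1(E)=0$. Next, $c_2(E)$ is the image of $\delta$ in $\CH^2(U)$. By the same localisation argument for Chow groups, $\CH^2(U)=\Zb/\delta$, since the pushforward of $h$ has degree $\delta$. Hence $c_2(E)=0$.

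\emph{Step 4: real realisation.} The topological bundle $E(\Rb)$ has trivial determinant and Euler class $\widetilde{\gamma}^2(e(E))$. The map $\widetilde{\gamma}^2$ factors through $\Hr^2(\Pb^3(\Rb),\Zb)=\Zb/2$, and $\delta$ is even, so this Euler class vanishes. I then conclude that $E(\Rb)$ is trivial by obstruction theory on the $3$-dimensional manifold $U(\Rb)\cong\Rb\Pb^3$: oriented rank $2$ bundles are classified by their Euler class.

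The main obstacle is Step 1, specifically the factor-of-$2$ computation in $\widetilde{\CH}^2(U)=\Zb/2\delta$ (rather than $\Zb/\delta$). This requires a careful analysis of the twisted Chow--Witt pushforward from a surface without real points, using the hypothesis on $\Pic$ and the congruence condition on $\delta$.
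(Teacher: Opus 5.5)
Your Steps 2--4 are fine. Step 2 differs from the construction recalled in the proof of Proposition \ref{prop:ce_qu_A}, where $E$ comes from the null-correlation bundle $(F,\varphi)$ on $\Pb^3_\Rb$ via $\delta[(F,\varphi)_{|U}]=[(E,\psi)]+(\delta-1)[\Hr(\Osc_U)]$ in $\GW^2(U)$. Your surjectivity of $\Vsc_2(U,\Osc_U)\to\widetilde{\CH}^2(U)$ gives existence just as well, but it does not give the explicit $\Kr$-theory class that the paper uses later.

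The genuine gap is Step 1, which carries the whole content of $\widetilde{\CH}^2(U)\cong\Zb/2\delta$. You only assert (a) and (b), and the mechanism you do state is wrong. Jacobson's theorem kills $\Hr^*(X,\Ibf^j)$ only for $j\geqslant 3$, so it does not make $\widetilde{\CH}^1(X,\Osc_X(\delta))\to\CH^1(X)$ an isomorphism. If it were one, you would get $\Zb/\delta$, not $\Zb/2\delta$. Indeed, $\widetilde{\CH}^2(\Pb^3_\Rb)\to\CH^2(\Pb^3_\Rb)$ is an isomorphism $\Zb\to\Zb$: its kernel is the image of the $2$-torsion group $\Hr^2(\Pb^3_\Rb,\Ibf^3)$, and $e(F)\mapsto h^2$. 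So the image of the pushforward would be generated by $i_*(i^*h)=\delta h^2$. Likewise, in (a) there is no Chow--Witt lift of $i^*h$ to push forward. Emptiness of $X(\Rb)$ cannot explain the factor $2$ either, since it also holds when $4\mid\delta$.

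The missing idea is a Steenrod square computation on $X$.
\begin{itemize}
\item[(1)] The image of $\widetilde{\CH}^1(X,\Osc_X(\delta))\to\CH^1(X)=\Zb\cdot i^*h$ is the kernel of the boundary $\CH^1(X)\to\Hr^2(X,\Ibf^2)$.
\item[(2)] Since $\Hr^*(X,\Ibf^3)=0$, one has $\Hr^2(X,\Ibf^2)\cong\Ch^2(X)$. Modulo this identification the boundary is $\Sq_{\Osc_X(\delta)}=\Sq$, i.e.\ $x\mapsto x^2$ (the twist $\Osc_X(\delta)$ is a square). This is exactly as in the proof of Theorem \ref{theo:W2_surface}.
\item[(3)] All closed points of $X$ have residue field $\Cb$, and degree-$0$ zero-cycles on $X_\Cb$ form a divisible group. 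Hence $\Ch^2(X)\cong\Zb/2$ via half the degree.
\item[(4)] The class $(i^*h)^2$ has degree $\delta$, so $\Sq(i^*h)\neq 0$ precisely because $\delta\equiv 2\bmod 4$.
\end{itemize}
This gives (a): the image in $\CH^1(X)$ is $2\Zb\cdot i^*h$, which pushes forward to $2\delta\Zb\subseteq\widetilde{\CH}^2(\Pb^3_\Rb)$. It also gives (b), which you left unjustified. The obstruction group $\Hr^2(X,\Kbf_1^\MW(\Osc_X(\delta)))$ equals $\Coker(\CH^1(X)\to\Hr^2(X,\Ibf^2))=\Coker(\Sq)=0$. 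Your remark that this term is ``controlled by'' $\Hr^2(X,\Kbf_1^\Mr)=0$ ignores the $\Ibf^2$-part, which is $\Zb/2$ when $4\mid\delta$.
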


If $E$ is as in the above theorem, then by stability of Chern classes, the Chern classes of the rank $3$ bundle $E\oplus\Osc_U$ vanish. To produce a counter-example for Question (A), it now suffices to show that $E\oplus\Osc_U$ is not trivial. In fact:

\begin{prop}\label{prop:ce_qu_A}
There exist $U$ and $E$ as in Theorem \ref{theo:secondary_obstruction_subtle} such that $E$ is not stably trivial.
\end{prop}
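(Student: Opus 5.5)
Since stable triviality is detected by $\Kr_0$, I will show that the reduced class $[E]-[\Osc_U^2]\in\widetilde{\Kr}_0(U)$ is nonzero. If $E\oplus\Osc_U^n\simeq\Osc_U^{n+2}$ for some $n$, this class would vanish, so nonvanishing gives the proposition. I will work with the explicit $U=\Pb_\Rb^3\setminus X$ of Theorem \ref{theo:secondary_obstruction_subtle}, for a suitable choice of $\delta$ (probably $\delta=6$) and of $X$.

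\emph{Step 1: compute $\Kr_0(U)$.} Localization gives an exact sequence $\Kr_0(X)\xrightarrow{i_*}\Kr_0(\Pb_\Rb^3)\to\Kr_0(U)\to 0$. We have $\Kr_0(\Pb_\Rb^3)=\Zb[h]/(1-h)^4$ with $h=[\Osc(-1)]$, so it is torsion-free with the classes $x^j$, $x=1-h$, as a basis adapted to the codimension filtration. The image of $i_*$ is generated by the classes $[\Osc_X(m)]$, and, since $\Pic(\Pb^3)\to\Pic(X)$ is surjective, by pushforwards of other sheaves on $X$. The classes $[\Osc_X(m)]=(1-h^\delta)h^{-m}$ generate the ideal $(1-h^\delta)$. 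The remaining contributions come from $\Fr^1\Kr_0(X)$, that is, from curves and points on $X$. Their images lie in $\Fr^2\Kr_0(\Pb^3)$ and are controlled by the degrees of curves on $X$ (multiples of $\delta$, since $\Pic X$ is generated by the hyperplane class) and of $0$-cycles on $X$ (even, since $X(\Rb)=\emptyset$). I would write $\Im i_*$ explicitly as the subgroup generated by $\delta x+\ldots$, by $\delta x^2$ modulo higher terms, and by $2x^3$. From this I would get $\widetilde{\Kr}_0(U)$ as a finite group with generators $x,x^2,x^3$.

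\emph{Step 2: identify $[E]$.} In \cite{asokSplittingVectorBundles2025}, $E$ is obtained from a class in $\widetilde{\CH}^2(\Pb_\Rb^3)\cong\Zb$: its Euler class equals $\delta$ times the generator restricted to $U$, and $c_2(E)=0$ because $\delta$ times the point class dies in $\CH^2(U)=\Zb/\delta$. By Lemma \ref{lem:unstable_to_stable_rank_2_surfaces}-type reasoning, or directly from the construction, the class $[E]-2$ lies in $\Fr^2\Kr_0(U)$ and its image in $\Gra^2$ is $c_2(E)=0$. Hence $[E]-2\in\Fr^3\Kr_0(U)$, which is the image of the point class $x^3$. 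So $[E]-2=\lambda\, x^3|_U$ for some $\lambda$, and the task is to compute $\lambda$ modulo the relation $2x^3=0$ coming from Step 1. I would determine $\lambda$ through the hermitian refinement, using the forgetful map $\widetilde{\GW}^2(U)\to\widetilde{\Kr}_0(U)$. Concretely, I would compute $\Kr_0$ and $\GW^2$ of $\Pb^3$ and of $U$ through the symplectic class realizing $e=\delta$, and reduce to showing that $\delta/2$ (which is odd, as $\delta\equiv 2\bmod 4$) times the point class is nonzero in $\Kr_0(U)$.

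\emph{Main obstacle.} The hard part is to show that $x^3|_U\neq 0$, i.e.\ that $x^3\notin\Im i_*$. One needs to rule out that some combination of the $i_*$-relations coming from curve and divisor classes produces an odd multiple of $x^3$. My first attempt would use the Riemann--Roch/Chern character $\mathrm{ch}:\Kr_0(\Pb^3)\to\Hr^*(\Pb^3)_\mathbb{Q}$ and compute $\mathrm{ch}(i_*\beta)$ by Grothendieck--Riemann--Roch. From this I would check that the degree-$3$ component of any element of $\Im i_*\cap\Fr^3$ is even, using $\deg$ of $0$-cycles on $X$ even and $\delta$ even. Alternatively, I would pass to the real locus and the $\Kr\mathrm{O}$-theory of $U(\Rb)=\Rb\Pb^3$, or to a $2$-adic invariant, as a second route if the bookkeeping fails.
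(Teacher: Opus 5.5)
Your framework matches the paper's: take $\delta=6$, use $\Kr_0(X)\xrightarrow{i_*}\Kr_0(\Pb^3_\Rb)\to\Kr_0(U)\to 0$, generate $\Im i_*$ by $[\Osc_X]$, $[\Osc_X(-1)]$ and closed points of even degree, and show that a lift of $[E]-2$ avoids $\Im i_*$. But Step~2 has a genuine gap: you never determine the class of $E$. From $c_1(E)=c_2(E)=0$ you correctly get $[E]-2\in\Fr^3\Kr_0(U)=\{0,x^3|_U\}$. So the whole proposition reduces to the parity of $\lambda$, and ``through the hermitian refinement'' followed by an unexplained reduction to ``$\delta/2$ times the point class'' is not an argument.

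The missing input is the actual construction from \cite{asokSplittingVectorBundles2025}: $\delta[(F,\varphi)|_U]=[(E,\psi)]+(\delta-1)[\Hr(\Osc_U)]$ in $\GW^2(U)$, where $F$ is the null-correlation bundle on $\Pb^3_\Rb$ ($c_1=0$, $c_2=h^2$). Forgetting the forms gives $[E]-2=\delta([F]-2)|_U$. You must then compute $[F]$ from $0\to F\to G\to\Osc(1)\to 0$ and $0\to\Osc(-1)\to\Osc^4\to G\to 0$; in your notation this gives $[F]-2=-x^2-x^3$.

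This also shows that your ``modulo higher terms'' is not harmless. The lift $-\delta x^2-\delta x^3$ of $[E]-2$ does not lie in $\Fr^3\Kr_0(\Pb^3_\Rb)$. To rewrite it as a multiple of $x^3|_U$ you must use $x\cdot i_*[\Osc_X]=\delta x^2-\binom{\delta}{2}x^3\in\Im i_*$, which yields $[E]-2=-\tfrac{\delta(\delta+1)}{2}\,x^3|_U$. This coefficient is odd precisely because $\delta\equiv 2\bmod 4$, and it is the $x^3$-coefficient $\binom{\delta}{2}$ of that relation that makes it odd. Your $\delta/2$ has the right parity, but nothing in the proposal produces it.

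Conversely, what you call the main obstacle is immediate. $\Kr_0(\Pb^3_\Rb)$ is torsion-free and $\Im i_*$ is generated by $\delta x-\binom{\delta}{2}x^2+\binom{\delta}{3}x^3$, $\delta x^2-\binom{\delta}{2}x^3$ and $2x^3$. Hence $\Im i_*\cap\Fr^3\Kr_0(\Pb^3_\Rb)=2\Zb x^3$, with no Riemann--Roch needed. The paper instead checks directly, by a parity argument on a small integer linear system, that $(-12,30,-24,6)$ is not in the span of the three generators.

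Finally, your fallback through $\mathrm{KO}(U(\Rb))$ cannot work. $E(\Rb)$ is trivial by Theorem~\ref{theo:secondary_obstruction_subtle}, and in any case a point class dies in $\widetilde{\mathrm{KO}}$ of a $3$-manifold, since $\widetilde{\mathrm{KO}}(S^3)=0$.
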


\begin{proof}
Let us recall how $E$ is constructed in the proof of \cite[Theorem 3.2.1]{asokSplittingVectorBundles2025}. Let $X$ be a surface as in Theorem \ref{theo:secondary_obstruction_subtle} (such surfaces exist by \cite[Proposition 3.2.2]{asokSplittingVectorBundles2025} hence, ultimately, because of the Noether--Lefschetz theorem); set $U=\Pb_\Rb^3\setminus X$. There exists a vector bundle $F$ of rank $2$ on $\mathbb{P}_\Rb^3$ whose Chern classes are $c_1(F)=0$ and $c_2(F)=h^2$ where $h\in\CH^1(\mathbb{P}_\Rb^3)$ is a hyperplane section and underlying an alternating form $\varphi$. By \cite[Proposition 11]{faselChowWittGroups2009}, the class $\delta[(F,\varphi)_{|U}]\in\GW^2(U)$ is of the form $[(E,\psi)]+(\delta-1)[\Hr(\Osc_U)]$ where $E$ is a rank $2$ vector bundle on $U$ and $\psi$ is an alternating form on $E$, and $\Hr(\Osc_U)$ is the hyperpolic alternating form on $\Osc_U$; then $E$ has the required properties.

Recall the projective bundle formula \cite[Theorem 8.5]{weibelKbookIntroductionAlgebraic2013}: there is an isomorphism $\Kr_0(\Pb_\Rb^3)\cong\Zb[t]/\langle(1-t)^4\rangle$ of rings where $t=[\Osc_{\Pb_\Rb^3}(-1)]\in\Kr_0(\Pb_\Rb^3)$ (from now on, for ease of notation, we omit the index $\Pb_\Rb^3$ where applicable). In other words, the abelian group $\Kr_0(\Pb_\Rb^3)$ is freely generated by $([\Osc(-n)])_{0\leqslant n\leqslant 3}$ and the relation 
\begin{equation}\label{eq:relation_K_theory_proj}
[\Osc(-4)]-4[\Osc(-3)]+6[\Osc(-2)]-4[\Osc(-1)]+[\Osc]=0
\end{equation}
holds in $\Kr_0(\Pb_\Rb^3)$. From it, we can deduce the $\Kr$-theory class of $\Osc(n)$ for any $n\in\Zb$, using the product structure on $\Kr_0(\mathbb{P}_\Rb^3)$ induced by the tensor product of vector bundles. For instance, this yields \[[\Osc(1)]=-[\Osc(-3)]+4[\Osc(-2)]-6[\Osc(-1)]+4[\Osc].\] It will be convenient to represent the elements of $\Kr_0(\Pb_\Rb^3)$ as column and row vectors with respect to the ordered basis $([\Osc],[\Osc(-1)],[\Osc(-2)],[\Osc(-3)])$, so that \[a[\Osc]+b[\Osc(-1)]+c[\Osc(-2)]+d[\Osc(-3)]=\begin{bmatrix}
a \\
b \\
c \\
d \\
\end{bmatrix}=(a,b,c,d).
\] Thus $[\Osc(1)]=(4,-6,4,-1)$.

Now the vector bundle $F$ on $\mathbb{P}_\Rb^3$ sits in an exact sequence \[0\to F\to G\to\Osc(1)\to 0\] where $G$ sits in an exact sequence \[0\to\Osc(-1)\to\Osc^4\to G\to 0\] of vector bundles on $\mathbb{P}_\Rb^3$. Thus there are equalities $[G]=4[\Osc]-[\Osc(-1)]=(4,-1,0,0)$ and $[F]=[G]-[\Osc(1)]$ in $\Kr_0(\Pb_\Rb^3)$, so that \[[F]=(0,5,-4,1)\in\Kr_0(\mathbb{P}_\Rb^3).\] Therefore $[E]=\delta[F_{|U}]-2(\delta-1)[\Osc_U]\in\Kr_0(U)$ is the restriction of $(-2(\delta-1),5\delta,-4\delta,\delta)$ living in $\Kr_0(\Pb_\Rb^3)$. Since $U$ is affine, the bundle $E$ is stably trivial if, and only if, its $\Kr$-theory class satisfies $[E]=2[\Osc_U]$. Therefore our aim is to prove that the class $(-2\delta,5\delta,-4\delta,\delta)\in\Kr_0(\Pb_\Rb^3)$ does not restrict to the zero class in $\Kr_0(U)$.

From now on, we take $\delta=6$ hence $(-2\delta,5\delta,-4\delta,\delta)=(-12,30,-24,6)$. Since $X$ is smooth, so that its $\Kr$-theory coincides with its $\Gr$-theory, the inclusion $i:X\hookrightarrow\Pb_\Rb^3$ and the open immersion $j:U\hookrightarrow\Pb_\Rb^3$ induce an exact sequence \[\Kr_0(X)\xrightarrow{i_*}\Kr_0(\Pb_\Rb^3)\xrightarrow{j^*}\Kr_0(U)\to 0.\] Therefore to prove that the class $(-12,30,-24,6)\in\Kr_0(\Pb_\Rb^3)$ does not map to $0$ under $j^*$, it suffices to show that it does not lie in the image of $i_*$.

To do this, we consider again the filtration $\Fr^\bullet\Kr_0(X)$ given by codimension of the support. The Chow groups of $X$ are given by \[\CH^0(X)=\Zb\cdot[X],\;\CH^1(X)=\Zb\cdot(i^*h)\] and $\CH^2(X)$ is generated by the classes of closed points. Note that all such points of $X$ have residue field isomorphic to $\Cb$ since $X(\Rb)$ is empty by choice of $X$. The map $\CH^p(X)\to\Fr^p\Kr_0(X)/\Fr^{p+1}\Kr_0(X)$ carrying a subvariety $Y$ to the $\Kr$-theory class $[\Osc_Y]$ of the coherent sheaf $\Osc_Y$ is surjective, and $\Fr^p\Kr_0(X)=0$ for all $p\geqslant 3$ since $X$ is a surface. We conclude that $\Kr_0(X)$ is generated by the classes $[\Osc_X]$ and $[i^*\Osc(-1)]$ (recall that we omit the index $\Pb_\Rb^3$ when possible) together with the classes of closed points in $X$. Now there is an exact sequence \[0\to\Osc(-6)\to\Osc\to i_*\Osc_X\to 0\] of sheaves on $\Pb_\Rb^3$. This yields equalities \[[i_*\Osc_X]=[\Osc]-[\Osc(-6)],\;[i_*i^*\Osc(-1)]=[\Osc(-1)]-[\Osc(-7)]\] in $\Kr_0(\Pb_\Rb^3)$. Multiplying (\ref{eq:relation_K_theory_proj}) by $[\Osc(-1)]$ yields \[[\Osc(-5)]=4[\Osc(-4)]+(-6[\Osc(-3)]+4[\Osc(-2)]-[\Osc(-1)])=4\begin{bmatrix}
-1 \\
4 \\
-6 \\
4
\end{bmatrix}+\begin{bmatrix}
0 \\
-1 \\
4 \\
-6
\end{bmatrix}=\begin{bmatrix}
-4 \\
15 \\
-20 \\
10
\end{bmatrix}.
\] Then \[[\Osc(-6)]=4[\Osc(-5)]-6[\Osc(-4)]+(4[\Osc(-3)]-[\Osc(-2)])=
4\begin{bmatrix}
-4 \\
15 \\
-20 \\
10
\end{bmatrix}+(-6)\begin{bmatrix}
-1 \\
4 \\
-6 \\
4
\end{bmatrix}+\begin{bmatrix}
0 \\
0 \\
-1 \\
4
\end{bmatrix}\] so that $[\Osc(-6)]=(-10,36,-45,20)$. Finally, the same method yields
\[
[\Osc(-7)] = 4\begin{bmatrix}
-10 \\
36 \\
-45 \\
20
\end{bmatrix}+(-6)\begin{bmatrix}
-4 \\
15 \\
-20 \\
10
\end{bmatrix}+4\begin{bmatrix}
-1 \\
4 \\
-6 \\
4
\end{bmatrix}+\begin{bmatrix}
0 \\
0 \\
0 \\
-1
\end{bmatrix}=\begin{bmatrix}
-20 \\
70 \\
-84 \\
35
\end{bmatrix}.
\] We conclude that \[i_*[\Osc_X]=[\Osc]-[\Osc(-6)]=
\begin{bmatrix}
1 \\
0 \\
0 \\
0
\end{bmatrix}-
\begin{bmatrix}
-10 \\
36 \\
-45 \\
20
\end{bmatrix}=\begin{bmatrix}
11 \\
-36 \\
45 \\
-20
\end{bmatrix},\] \[i_*[i^*\Osc(-1)]=[\Osc(-1)]-[\Osc(-7)]=\begin{bmatrix}
0 \\
1 \\
0 \\
0
\end{bmatrix}-
\begin{bmatrix}
-20 \\
70 \\
-84 \\
35
\end{bmatrix}=
\begin{bmatrix}
20 \\
-69 \\
84 \\
-35
\end{bmatrix}.
\] Finally it remains to compute the image of classes of closed points. To do so, it suffices to compute the classes of complex closed points in $\Kr_0(\Pb_\Rb^3)$. If $x$ is such a point, then it defines a rational point in $\Pb_\Cb^3$ whose class in $\Kr_0(\Pb_\Cb^3)$ is $([\Osc_{\Pb_\Cb^3}]-[\Osc_{\Pb_\Cb^3}(-1)])^3$. The class of $x$ in $\Kr_0(\Pb_\Rb^3)$ is then given by $p_*(([\Osc_{\Pb_\Cb^3}]-[\Osc_{\Pb_\Cb^3}(-1)])^3)$ where $p$ is the finite étale degree $2$ projection $\Pb_\Cb^3\to\Pb_\Rb^3$. Since $([\Osc_{\Pb_\Cb^3}]-[\Osc_{\Pb_\Cb^3}(-1)])^3=p^*(([\Osc]-[\Osc(-1)])^3)$ and since $p_*p^*$ is multiplication by $2$, we conclude that the class of any complex point $x\in\Pb_\Rb^3(\Cb)$ in $\Kr_0(\Pb_\Rb^3)$ is given by \[2([\Osc]-3[\Osc(-1)]+3[\Osc(-2)]-[\Osc(-3)])=(2,-6,6,-2).\]

Finally we obtain that $\Im i_*$ is the subgroup of $\Kr_0(\Pb_\Rb^3)$ generated by \[(11,-36,45,-20),(20,-69,84,-35),(2,-6,6,-2).\] We now wish to show that $(-12,30,-24,6)$ does not lie in this subgroup. Assume by contradiction that \[a
\begin{bmatrix}
2 \\
-6 \\
6 \\
-2
\end{bmatrix}+b\begin{bmatrix}
20 \\
-69 \\
84 \\
-35
\end{bmatrix}+c\begin{bmatrix}
11 \\
-36 \\
45 \\
-20
\end{bmatrix}=\begin{bmatrix}
-12 \\
30 \\
-24 \\
6
\end{bmatrix}
\] where $(a,b,c)\in\Zb^3$. Thus $(a,b,c)$ is a solution of the linear system
\[\left\{
\begin{array}{rcl}
2a+20b+11c  &=& -12 \\
-6a-69b-36c &=& 30 \\
6a+84b+45c  &=& -24 \\
-2a-35b-20c &=& 6
\end{array}\right.
\] Multiplying the first equation by $3$ yields \[6a+60b+33c=-36.\] Substracting this equation from the third, we see that \[(6a+84b+45c)-(6a+60b+33c)=-24-(-36)=12\] thus \[24b+12c=12.\] Dividing this equation by $12$ yields $2b+c=1$ so $c=1-2b$ is odd. On the other hand, reducing mod $2$ the equality $2a+20b+11c=-12$ shows that $c$ is even: contradiction.
\end{proof}

\begin{rema}
We make a few comments about the above result.
\begin{enumerate}
	\item Since $U(\Rb)$ is compact and connected, we do not know of any general argument that would imply that the stabilisation map $\Vsc_3(U)\to\widetilde{\Kr}_0(U)$ is injective (see \cite{banerjeeSuslinsCancellationConjecture2026} for results in this direction) so there may exist rank $3$ bundles on $U$ that are stably trivial but not trivial, though, according to \cite[Theorem 4.15]{dasOrbitSpacesUnimodular2018}, there is at most one such bundle up to isomorphism. For instance, it is known (\cite[Theorem 4.16]{dasOrbitSpacesUnimodular2018}) that if $X$ is a real algebraic sphere of odd dimension $d\notin\{1,3,7\}$, the set of isomorphism classes of stably free rank $d$ vector bundles on $X$ is in bijection with $\Zb/2$, the nontrivial vector bundle being the tangent bundle (note however that all vector bundles over the real algebraic sphere of dimension $3$ are free by \cite{faselProjectiveModulesReal2011}).
	\item The variety $U$ is evidently rational even over $\Rb$ in Proposition \ref{prop:ce_qu_A}. Consequently, the assumption that the real locus has no compact connected component cannot be removed in \cite[p. 215, second-to-last paragraph]{kucharzAlgebraicCyclesVector1988}. As we shall see below (Theorem \ref{theo:classification_by_chern_classes}), this topological assumption is in fact the crucial one.
	\item To our knowledge, Proposition \ref{prop:ce_qu_A} gives the first example of a vector bundle on a smooth real affine threefold with trivial Chern classes that is not stably trivial. The examples exhibited in \cite{asokSplittingVectorBundles2025} appear to be rather subtle and we do not use the full strength of \cite[Theorem 3.2.1]{asokSplittingVectorBundles2025} in the above proof: indeed, we do not use that in Theorem \ref{theo:secondary_obstruction_subtle}, the bundle $E$ can be chosen so that $E(\Rb)$ is trivial. It would be interesting to construct a rank $3$ bundle $E$ on a smooth affine threefold $X$ with trivial Chern classes but such that the (reduced) real $\Kr$-theory class $[E(\Rb)]\in\widetilde{\mathrm{KO}}(X(\Rb))$ is nonzero (or to prove that such bundles do not exist); this would imply that the reduced $\Kr$-theory class $[E]$ is nonzero since it maps to $[E(\Rb)]$ under the real realisation map $\widetilde{\Kr}_0(X)\to\widetilde{\mathrm{KO}}(X(\Rb))$.
\end{enumerate}
\vspace{-\topsep}\vspace{-\topsep}
\end{rema}

\subsection{The classification of rank $3$ bundles on smooth affine threefolds with small real locus}

As mentioned in the introduction, the following theorem, whose $\Kr$-theoretic variant was suggested to us by Fasel, fits in the theme developed in \cite{asokSplittingVectorBundles2025} and especially in \cite{banerjeeSuslinsCancellationConjecture2026} as well as in Theorem \ref{theo:coh_classification_rank_2} according to which the theory of vector bundles of smooth real affine varieties of suitably (cohomologically) small real locus mirrors the same theory of smooth affine varieties over $\Cb$ (or more generally over an algebraically closed base field).

\begin{theo}\label{theo:classification_by_chern_classes}
Let $X$ be a smooth real affine threefold and suppose that $\Hr^3(X(\Rb),\Zb/2)=0$. The map $\varphi_3(X)$ is then bijective.
\end{theo}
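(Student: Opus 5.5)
Surjectivity is immediate from Kucharz's theorem (equivalently Theorem \ref{theo:Kucharz_over_a_field}). Indeed, by \cite[Theorem 3.2 (d)]{colliot-theleneZerocyclesCohomologyReal1996}, $\Ch^3(X)\cong\Hr^3(X(\Rb),\Zb/2)=0$. Hence condition (ii) holds for every triple, as in Example \ref{exe:divisible_chow_group}.

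For injectivity, I would follow Fasel's $\Kr$-theoretic suggestion and factor $\varphi_3(X)$ as
\[
\Vsc_3(X)\to\widetilde{\Kr}_0(X)\to\prod_i\CH^i(X),
\]
where the first map is stabilisation and the second takes total Chern classes. I then prove that each map is injective.

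\emph{Step 1: stabilisation is injective.} Serre's splitting theorem gives surjectivity of $\Vsc_3(X)\to\widetilde{\Kr}_0(X)$. For injectivity, I would invoke the cancellation results of \cite{banerjeeSuslinsCancellationConjecture2026}, which show that rank $d$ bundles on a smooth real affine $d$-fold cancel when $\Hr^d(X(\Rb),\Zb/2)=0$. If a self-contained argument is preferred, I would use motivic obstruction theory instead. Stably isomorphic rank $3$ bundles differ by an element of the orbit set $\mathrm{Um}_4(\Osc(X))/\Er_4$, which is controlled by $\Hr^3(X,\pi_3(\mathbb{A}^4\setminus 0))=\Hr^3(X,\Kbf_4^\MW)$. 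This group is an extension of $\Hr^3(X,\Kbf_4^\Mr)$ by $\Hr^3(X,\Ibf^5)$. The second piece is isomorphic to $\Hr^3(X(\Rb),\Zb)$, which vanishes. For the first piece, $\Hr^3(X,\Kbf_4^\Mr)$ is divisible, by the same argument as in the algebraically closed case combined with $\Ch^3(X)=0$, and a Suslin-type argument on divisible groups then kills the remaining ambiguity. I expect this step to be the main obstacle.

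\emph{Step 2: Chern classes are injective on $\widetilde{\Kr}_0(X)$.} I would use the filtration $\Fr^\bullet\Kr_0(X)$ together with the maps $c_i\circ\pi_i=(-1)^{i-1}(i-1)!$.
\begin{itemize}
\item On $\Gra^1\Kr_0$, the map $c_1$ is an isomorphism to $\Pic$.
\item On $\Gra^2\Kr_0$, the map $c_2$ is an isomorphism to $\CH^2$, since $\pi_2$ is surjective and $c_2\circ\pi_2=-1$.
\item On $\Gra^3\Kr_0$, which is a quotient of $\CH^3(X)$, the map $c_3$ restricts to multiplication by $2$ after composing with $\pi_3$.
\end{itemize}
Now suppose $\alpha\in\widetilde{\Kr}_0(X)$ has trivial Chern classes. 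By the Whitney formula and induction, $\alpha\in\Fr^3\Kr_0(X)$, so $\alpha=\pi_3(b)$ with $2b=0$ in $\CH^3(X)$. It remains to show that $\pi_3$ kills $2$-torsion in $\CH^3(X)$. Since $\CH^3(X)$ is divisible ($\Ch^3(X)=0$), and by \cite[Theorem 1.3]{colliot-theleneZerocyclesCohomologyReal1996} it is a quotient of $\CH^3(X_\Cb)$ (as $t=0$), I would transfer the question to $X_\Cb$ using $p_*$. There, $2$-torsion classes of $\CH_0$ map trivially to $\Kr_0$ via Roitman-type arguments, since the comparison $\pi_3$ on $X_\Cb$ is injective on torsion up to $(d-1)!$. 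I would then use $p_*\pi_3=\pi_3p_*$ to conclude. Failing this, I would check directly that $\Gra^3\Kr_0(X)$ is uniquely $2$-divisible by comparing with the Chow group.

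Combining the two steps, equality of Chern classes implies equality of the classes in $\widetilde{\Kr}_0(X)$, and hence an isomorphism of the rank $3$ bundles.
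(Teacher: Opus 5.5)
Your architecture matches the paper's. Surjectivity follows from the Kucharz--Fasel criterion because $\Ch^3(X)\cong\Hr^3(X(\Rb),\Zb/2)=0$. Cancellation for rank $3=\dim X$ comes from \cite[\S 4.3]{banerjeeSuslinsCancellationConjecture2026}, so Step~1 is not the obstacle once you cite this. The filtration argument then reduces you to $[E]-[F]=\pi_3(b)$ with $2b=0$ in $\CH^3(X)$.

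The genuine gap is the last step, which is exactly where the hypothesis on $X(\Rb)$ must be used a second time. Proposition~\ref{prop:ce_qu_A} shows that $\pi_3$ does not kill $2$-torsion in general: there $[E]-2[\Osc_U]$ is a nonzero element of $\Fr^3\Kr_0(U)$ with vanishing Chern classes. Your transfer argument fails as written. To use $p_*\circ\pi_3=\pi_3\circ p_*$, you would need $b$ to be $p_*$ of a torsion class on $X_\Cb$. But $\CH^3(X_\Cb)$ is torsion-free for a smooth affine complex threefold (Roitman). So a nonzero torsion class $b$ has only non-torsion preimages, and knowing that $\pi_3$ kills torsion on $X_\Cb$ tells you nothing about $b$. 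Divisibility of $\CH^3(X)$ does not help either, since a divisible group such as $\mathbb{Q}/\Zb$ has $2$-torsion. Your fallback (showing $\Gra^3\Kr_0(X)$ has no $2$-torsion by comparison with the Chow group) presupposes precisely the missing fact.

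The missing input is that $\CH^3(X)$ itself is torsion-free under the hypothesis. By \cite[Theorem 1.6 (a)]{colliot-theleneZerocyclesCohomologyReal1996}, the torsion subgroup of $\CH^3(X)$ is isomorphic to $\Hr^3(X(\Rb),\Zb/2)=0$. Hence $b=0$ outright and $[E]=[F]$; this is what the paper does.

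Alternatively, your transfer idea can be repaired by using anti-invariance rather than torsion upstairs:
\begin{itemize}
\item Write $b=p_*(b')$, which is possible since $t=0$, and let $\sigma$ be complex conjugation on $X_\Cb$.
\item Since $p^*p_*=1+\sigma^*$, the class $p^*(b)=b'+\sigma^*b'$ is $2$-torsion in the torsion-free group $\CH^3(X_\Cb)$. Hence $\sigma^*b'=-b'$.
\item $\CH^3(X_\Cb)$ is uniquely divisible (torsion-free, and divisible by \cite[Lemma 1.2]{colliot-theleneZerocyclesCohomologyReal1996}). So $b''=b'/2$ exists and is also anti-invariant.
\item Therefore $p_*(b'')=p_*(\sigma^*b'')=-p_*(b'')$, and $b=2p_*(b'')=0$.
\end{itemize}
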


\begin{proof}
If $(c_i)_i\in\prod_i\CH^i(X)$, then setting $w_i=\overline{\gamma}^i(c_i)$, the relation $w_3=\Sq_{w_1}(w_2)$ between elements of $\Hr^3(X(\Rb),\Zb/2)=0$ is automatically satisfied. By Kucharz's theorem, this guarantees that $\varphi_3(X)$ is surjective.

We now show that $\varphi_3(X)$ is injective. Let $E$ and $F$ be rank $3$ vector bundles on $X$ such that $c_i(E)=c_i(F)$ for every $i$. We have to show that $E$ and $F$ are isomorphic. Since $\Hr^3(X(\Rb),\Zb/2)$ is trivial and as $E$ has rank $3=\dim X$, by \cite[§4.3]{banerjeeSuslinsCancellationConjecture2026}, the bundle $E$ is cancellative so to prove this, it suffices to show that the $\Kr$-theory classes of $E$ and $F$ agree. Note that $E$ and $F$ have rank $3$ so $[E]-[F]\in\Fr^1\Kr_0(X)=\Ker\rk$. Next recall the homomorphisms \[\pi_i:\CH^i(X)\to\Gra^i\Kr_0(X),\;c_i:\Gra^i\Kr_0(X)\to\CH^i(X)\] from the proof of Theorem \ref{theo:Kucharz_over_a_field}. If $i\leqslant 2$, the morphism $\pi_i$ is surjective and $c_i\circ\pi_i$ is multiplication by $(-1)^{i-1}(i-1)!=(-1)^{i-1}$ which is injective. Thus $\pi_i$ is an isomorphism; since the composite $c_i\circ\pi_i$ is an isomorphism, the morphism $c_i$ is then also an isomorphism. In particular, since $c_i(E)=c_i(F)$ for $i\leqslant 2$, we see that $[E]-[F]$ in fact lies in $\Fr^3\Kr_0(X)$. Since $X$ has dimension $3$, the group $\Fr^4\Kr_0(X)$ is trivial so we may write $[E]-[F]=\pi_3(\alpha)\in\Kr_0(X)$ where $\alpha\in\CH^3(X)$. We note that $c_3([E]-[F])=0$. Indeed let $F'$ be a vector bundle on $X$ such that $F\oplus F'$ is trivial; then by stability of Chern classes, one has \[
\begin{array}{rcl}
c_3([E]-[F]) &=& c_3([E\oplus F']-[F\oplus F']) \\
             &=& c_3(E)+c_2(E)\cdot c_1(F')+c_1(E)\cdot c_2(F')+c_3(F') \\
             &=& c_3(F)+c_2(F)\cdot c_1(F')+c_1(F)\cdot c_2(F')+c_3(F')
\end{array}
\] as $E$ and $F$ have the same Chern classes by assumption. The last term is $c_3(F\oplus F')$ which vanishes since $F\oplus F'$ is free hence $c_3([E]-[F])=0$. Since $c_3\circ\pi_3$ is multiplication by $2$, we then have $0=c_3([E]-[F])=c_3(\pi_3(\alpha))=2\alpha$ so the class $\alpha\in\CH^3(X)$ is $2$-torsion. But since $X$ is affine of dimension~$3$, by \cite[Theorem 1.6 (a)]{colliot-theleneZerocyclesCohomologyReal1996}, the torsion subgroup of $\CH^3(X)$ is isomorphic to $\Hr^3(X(\Rb),\Zb/2)$. By our assumption on $X(\Rb)$, this means that $\CH^3(X)$ is torsion free so that in fact $\alpha=0$ in $\CH^3(X)$. It follows that $\pi_3(\alpha)=[E]-[F]=0$ in $\Kr_0(X)$, as required.
\end{proof}

{\begin{footnotesize}
\printbibliography
\end{footnotesize}}

\end{document}